\documentclass[11pt]{amsart}
\usepackage{amssymb}
\usepackage{amsmath}
\usepackage{graphicx}
\usepackage[english]{babel}
\usepackage{color}
\usepackage{enumerate}
\usepackage[ansinew]{inputenc}
\usepackage{multicol}

\usepackage{pb-diagram}

\swapnumbers

\theoremstyle{plain}
\newtheorem{theorem}[subsection]{Theorem}
\newtheorem{proposition}[subsection]{Proposition}
\newtheorem{lemma}[subsection]{Lemma}

\theoremstyle{definition}
\newtheorem{definition}[subsection]{Definition}
\newtheorem{examples}[subsection]{Examples}
\newtheorem{example}[subsection]{Example}
\newtheorem{remark}[subsection]{Remark}
\newtheorem{notation}[subsection]{Notation}

% DEBUT DES MACROS LOCALES

\def\pun{\cdot}
\def\cir{\circ}

%V tensor whatever

%vecteur horizontal #1= variable #2= premier indice
%#3=dernier indice

%K[S whatever]

%billboard

\def\KK{{\mathbb{K}}}

%\def\TT{{\mathbb{T}}}

%\DeclareOption{notitlepage}

\begin{document}

\author[S. M\'arquez]{Sebasti\'an M\'arquez}
\address{Instituto de Matem\'aticas y F\'isica,
Universidad de Talca, Avda. Lircay s/n, Talca, Chile}
\email{semarquez@utalca.cl}

\title{COMPATIBLE ASSOCIATIVE BIALGEBRAS}

%\alttitle{}
\subjclass[2000]{ .}
\keywords{Compatible associative algebras, infinitesimal bialgebras, triples of operads}
\thanks{The author\rq s work was partially supported by the Project  FONDECYT Regular 1130939 and by MathAmSud 17Math-05 LIETS.}

%\date{\today}

\begin{abstract}
We introduce a non-symmetric operad $\mathcal{N}$, whose dimension in degree $n$ is given by the Catalan number $c_{n-1}$. It arises naturally in the study of coalgebra structures defined on compatible associative algebras.
We prove that any free compatible associative algebra admits a compatible infinitesimal bialgebra structure, whose subspace of primitive elements is a $\mathcal{N}$-algebra. The data $({\rm As},{\rm As}^2, \mathcal{N})$ is a good triple of
operads, in J.-L. Loday's  sense. Our construction induces another triple of operads $({\rm As},{\rm As}_2,{\rm As})$, where ${\rm As}_2$ is the operad of matching dialgebras.

Motivated by A. Goncharov's Hopf algebra of paths $P(S)$, we introduce the notion of bi-matching dialgebras and show that the Hopf algebra $P(S)$ is a bi-matching dialgebras.
\end{abstract}

\title{COMPATIBLE ASSOCIATIVE BIALGEBRAS}

\maketitle
\section*{Introduction} \label{s:int}

In the present work, we study the existence of coalgebra structures on compatible associative algebras.
A {\it compatible associative algebra} over a field $\KK$ is a vector space $A$ equipped  with two associative products,  $\cdot$  and $\circ$, satisfying that the sum
$$(1)\ x \ast y:=x \cdot y + x \circ y, $$
is an associative product.\\

In \cite{Strohmayer-Operads of compatible structures}, H. Strohmayer developed the general notion of compatible algebraic structures, and denotes the operad of compatible associative algebras by $\rm{As}^2$. He computed the Koszul dual of ${\rm As}^2$, denoted by $^2{\rm As}$, which is a set theoretical operad, by arising an operadic partition poset (see \cite{ParttionPosets}). The same author showed, using B. Valette\rq s results, that $^2{\rm As}$ is a Koszul operad, and therefore $\rm{As}^2$ is Koszul operad, too.

Applying H. Strohmayer\rq s work, V. Dotsenko  obtained in \cite{Dotsenko-Compatible associative product}  the dimensions of the operad $\rm{As}^2$ and computed the characters of $\rm{As}^2(n)$, both as an $S_n$-module and as an $S_n\times SL_2$-module. The dimension of $\rm{As}^2(n)$ is $n!$ times the Catalan number $c_n$.

The Catalan numbers are sequence of natural numbers, named after the Belgian mathematician Eug\`ene Charles Catalan (1814-1894). There exist many counting problems in combinatorics whose solution is given by them, a large description of the combinatorial objects described by Catalan numbers may be found in \cite{Stanley}. In particular, the $n^{th}$ Catalan number describes the number of plane rooted trees.\medskip

We look for coalgebra structures which are compatible with condition $(1)$, and therefore may be defined on any free compatible associative algebra. In \cite{LodayRonco-On the structure of cofree Hopf algebras}, J.-L. Loday and M. Ronco introduced the notion of unital infinitesimal bialgebra, as an associative unital algebra $(C, *, u)$ equipped with a coassociative coproduct $\Delta: C\longrightarrow C\otimes C$ satisfying that :
$$\Delta (x*y)= \sum x_{(1)}\otimes (x_{(2)}*y) + (x*y_{(1)})\otimes y_{(2)}\ -\ x\otimes y,$$
for $x,y\in C$, where $\Delta (x)=\sum x_{(1)}\otimes x_{(2)}$ and $\Delta (y)=\sum y_{(1)}\otimes y_{(2)}$.

A {\it compatible infinitesimal bialgebra} is a compatible algebra $(A, \cdot, \circ)$ equipped with a coassociative coproduct $\Delta$, satisfying the unital infinitesimal relation with both associative products.

We give an explicit construction of free objects in the category of $\rm{As}^2$-algebras, easier to work with than the one described in \cite{Dotsenko-Compatible associative product}.  Using it, we describe a canonical coproduct $\Delta$ on any free $\rm{As}^2$-algebra, which satisfies the unital infinitesimal condition with both products.

Following J.-L. Loday (see \cite{Loday-GeneralizedBialgebras}) and R. Holtkamp (see \cite{Holtkamp}), we know that there exists an algebraic operad which describes the subspace of primitive elements of a compatible associative bialgebra, we call this new structure a ${\mathcal N}$-algebra. The operad ${\mathcal N}$ is non-symmetric, and the dimension of the $\KK$-vector space ${\mathcal N}_n$ is the Catalan number $c_{n-1}$. We prove that:\begin{enumerate}
\item  there exists an operad homomorphism from  ${\mathcal N}$ to the operad of compatible associative algebras,
\item the subspace of primitive elements of any compatible associative bialgebra has a natural structure of ${\mathcal N}$ algebra,
\item the free compatible associative algebra over a vector space $V$ is isomorphic, as a coalgebra, to the cofree conilpotent coalgebra spanned by the free ${\mathcal N}$ algebra over $V$.\end{enumerate}

As a consequence of the previous results, we prove that the category of conilpotent compatible associative algebras is equivalent to the category of ${\mathcal N}$ algebras, which gives a good triple of operads $({\rm As},{\rm As}^2, \mathcal{N})$
as defined by J.-L. Loday in \cite{Loday-GeneralizedBialgebras}.
\medskip

A {\it matching dialgebra}, previously studied by C. Bai, L. Guo and Y.Zhang in  \cite{Zhang-The category and operad of matching dialgebras}, is a compatible associative algebra $(A, \cdot, \circ)$ satisfying that:
$$(x \cdot y) \circ z=x\cdot (y \circ z) \text{ and } (x\circ y) \cdot z=x\circ (y \cdot z), $$
for all the elements $x,y,z \in A$.

Motivated by {\it the path algebra} introduced by A. Goncharov  in \cite{Goncharov-Galois symmetries of fundamental groupoids and noncommutative geometry}, we define the notion of {\it bi-matching dialgebras} ,and show that the path algebra is a example of a bi-matching dialgebra, which is obtained from {\it semi-homomorphism} of algebras.
Finally, studying the subcategory of compatible associative bialgebras satisfying that their underlying compatible associative algebra is a matching dialgebra, we obtain another triple of operads $({\rm As},{\rm As}_2,{\rm As})$.
\medskip

The manuscript is organized as follows: Section 1 we construct the free compatible associative algebra over a vector space $V$.

In Section 2 we develop the notion of compatible infinitesimal bialgebra, while in Section 3 we introduce the notion of ${\mathcal N}$-algebra and prove the structure theorem for compatible infinitesimal bialgebras.

Matching dialgebras are defined in Section 4, where we describe A. Goncharov\rq s path Hopf algebra as the main example of this type of structure.\\
\subsection*{Acknowledgment}
I would like to express my thanks to Prof. M. Ronco for motivating me to work on this problem and for her constant contributions to it, and to Prof. A. Labra for many useful comments and for encouraging me to continue my research work. My special thanks to the University of Talca for the support provided during this period.

\bigskip
\section*{Notations}

All vector spaces and algebras considered in the manuscript are over a field $\KK$. Given a set $X$, we denote by $\KK[X]$ the vector space spanned by $X$. For any vector space $V$, we denote by $V^{\otimes n}$ the tensor product of $V\otimes \dots \otimes V$, $n$ times, over $\KK$. In order to simplify notation, we shall denote an element of $V^{\otimes n}$ by $x_1\cdots x_n$.\medskip

If $n$ is a positive integer, we denote by $[n]$ the set $\{1,\ldots, n\}$. The symmetric group of permutations of $[n]$ is denoted by $S_n$. Given a permutation $\sigma \in S_n$, we write $\sigma=(\sigma(1),\ldots,\sigma(n))$, identifying $\sigma$ with its image.

\section{The free compatible associative algebra}\label{freeCompAssAlg}
\medskip

In \cite{OperadLieComp}, V. Dotsenko and A. Khoroshkin computed the dimensions of the components for the operad of the {\it compatible Lie algebras} and for the {\it bi-Hamiltonian operad}.

A {\it compatible Lie algebra} is a $\KK$-vector space $A$, equipped  with two Lie brackets $[,]$ and $\{,\}$ , satisfying that their sum is also a Lie bracket.

When the compatible Lie algebra $A$ is equipped with a commutative and associative product $\cdot$ such that the brackets are both derivations for the product $\cdot$, we say that $A$ is a  {\it bi-Hamiltonian algebra}. In particular, a bi-Hamiltonian algebra is an analogue of a double structure of Poisson algebra, which appears naturally in certain examples of integrable systems.

As in the classical case, there exists a functor from the category of $\rm{As}^2$-algebras over $\KK$ to the category of compatible Lie algebras over the same field. If $(A, \cdot, \circ)$ is a compatible associative  algebra,  then the Lie brackets given by
\[\begin{array}{rll} [x,y]&=x \cdot y -y\cdot x , \\
\{x,y\}&=x \circ y -y\circ x, \\
\end{array}\]
define a structure of compatible Lie algebra on the underlying vector space of $A$.

In this section, we construct the free compatible associative algebra on a vector space $V$ by means of planar rooted trees. We recall the definition of compatible associative algebra, already described in Introduction.

\begin{definition}\label{def:AssComp} A \emph{compatible associative algebra} is a vector space $A$ together with two associative products $\pun :A\otimes A\to A$ and $\cir:A\otimes A\to A$, satisfying that their sum $*:=\pun + \cir $ is an associative product, too.
\end{definition}

Note that the if the product $*=\pun + \cir $ is  associative, then all linear combinations $\lambda\pun\  +\ \mu\ \cir $ are associative products, for any coefficients $\lambda$ and $\mu $ in $\KK$.

\begin{remark}\label{compatibilidad}  The condition $\pun + \cir $ is an associative product, is equivalent to:
$$x\cir(y\pun z)+x\pun(y\cir z)=(x\cir y)\pun z+(x\pun y)\cir z,$$
for all elements $x, y, z \in A$.\end{remark}

\medskip

V. Dotsenko in \cite{Dotsenko-Compatible associative product} constructed associative products on vector spaces spanned by trees using R. Grossman and R.G. Larson\rq s constructions (see \cite{Grossman-larson_Hopf-algebraic structure of families of trees}), but the associative products are compatible only in certain cases and quite difficult to deal with.\medskip

We give a different construction of the free associative compatible algebra, applying Dotsenko\rq s results on its dimensions, by means of planar rooted trees.
\subsection{The vector space ${\rm As}^2(V)$}
Let $V$ be a vector space with basis $X=\{a_i\}_{i \in I}$. Denote by $T{}^{X}_{n}$ the set of planar rooted trees with $(n+1)$ vertices, whose vertices different from the root are colored by the elements of $X$. For instance:

\begin{figure}[h!]
  \centering
  \includegraphics[width=12cm]{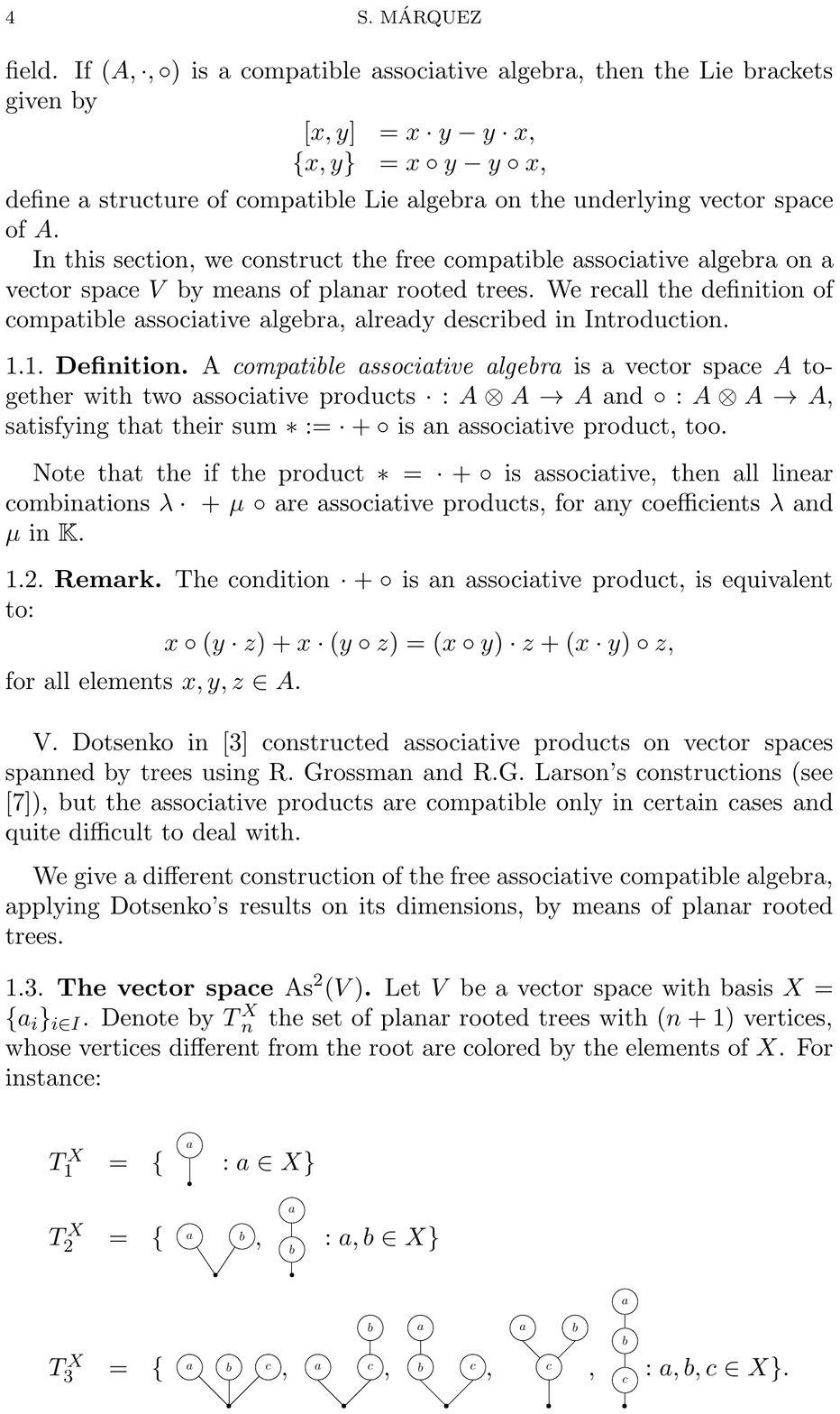}  
\end{figure}

Consider the vector space ${\rm{As}}^2(V)=\KK[\bigcup_{n\geq 1}T{}^{X}_{n}]=\bigoplus_{n\geq
1}^{} \KK[T{}^{X}_{n}]$, whose basis is the set $\bigcup_{n\geq 1}T{}^{X}_{n}$ of all planar rooted colored trees.

\begin{remark}

For any tree $t$ in $T{}^{X}_{n}$, we say that $t$ has degree $n$ and we write $\vert t\vert =n$.
We consider the tree $t$ oriented from bottom to top.

Given a vertex $v \in t$, we say that a vertex $v' \in t$ is a child of $v$ if $v'$ is directly connected to the vertex $v$.
\end{remark}

\begin{notation}
Given a tree $t$, the set of vertices of $t$ is denoted by ${\rm Vert}(t)$ and the root of $t$ by ${\rm root}(t)$. The subset ${\rm Vert}(t)\setminus \{{\rm root}(t)\}$ of ${\rm Vert}(t)$ is denoted by ${\rm Vert}^{\ast}(t)$.
\end{notation}

We define two associative products in ${\rm As}^2(V)$.

\begin{definition}\label{ProductoPunto}
Let $t, w$ be trees in ${\rm{As}}^2(V)$. Define $t \pun w$ as the tree obtained by identifying the roots of $t$ and $w$. Extending this binary operation by linearity, we get an associative product $$\cdot :{\rm{As}}^2(V) \otimes {\rm{As}}^2(V) \rightarrow {\rm{As}}^2(V).$$
\end{definition}

\begin{remark}\label{arbolesIrreEn$As^2$}
Note that any tree $t$ in ${\rm{As}}^2(V)$ may
be written in a unique way as $t=t^{1}\pun \ldots \pun t^{r}$, where $r \geq 1$ and the root of each $t^{i}$ has only one child, for each $i \in \{1,\ldots, r\}$. Clearly, we have that $|t|=\displaystyle{\sum_{i=1}^{r}|t^i|} $.

When the root of a tree $t \in T^X_n$ has a unique child, we say that $t$ is {\it irreducible}. We identify the elements of the basis $X$ with the trees of degree one (which are irreducible).

Denoting by ${\rm Irr}$ the vector space spanned by the set of all irreducible trees in ${\rm{As}}^2(V)$, we have that $({\rm{As}}^2(V),\cdot)$ is free over ${\rm Irr}$ as an associative algebra. The set of irreducible trees of degree $n$ is denoted by ${\rm Irr}_n$.
\end{remark}

\subsection{Second product}\label{ProductoCirculo}
Let  $t$ and $w$ be trees in ${\rm{As}}^2(V)$, with $t=t^1 \pun \ldots \pun t^r$ as described in Remark \ref{arbolesIrreEn$As^2$}. A second product $t\cir w$ is defined proceeding by induction on the degree $n$ of $w$.

If $n=1$, then $w=a$, for some  $a \in X$. In this case, the element $t \circ w $ is the tree obtained by replacing the root of $t$ by the vertex, colored with $a$, and adding a new root.\\

\begin{figure}[h!]
  \centering
  \includegraphics[width=6cm]{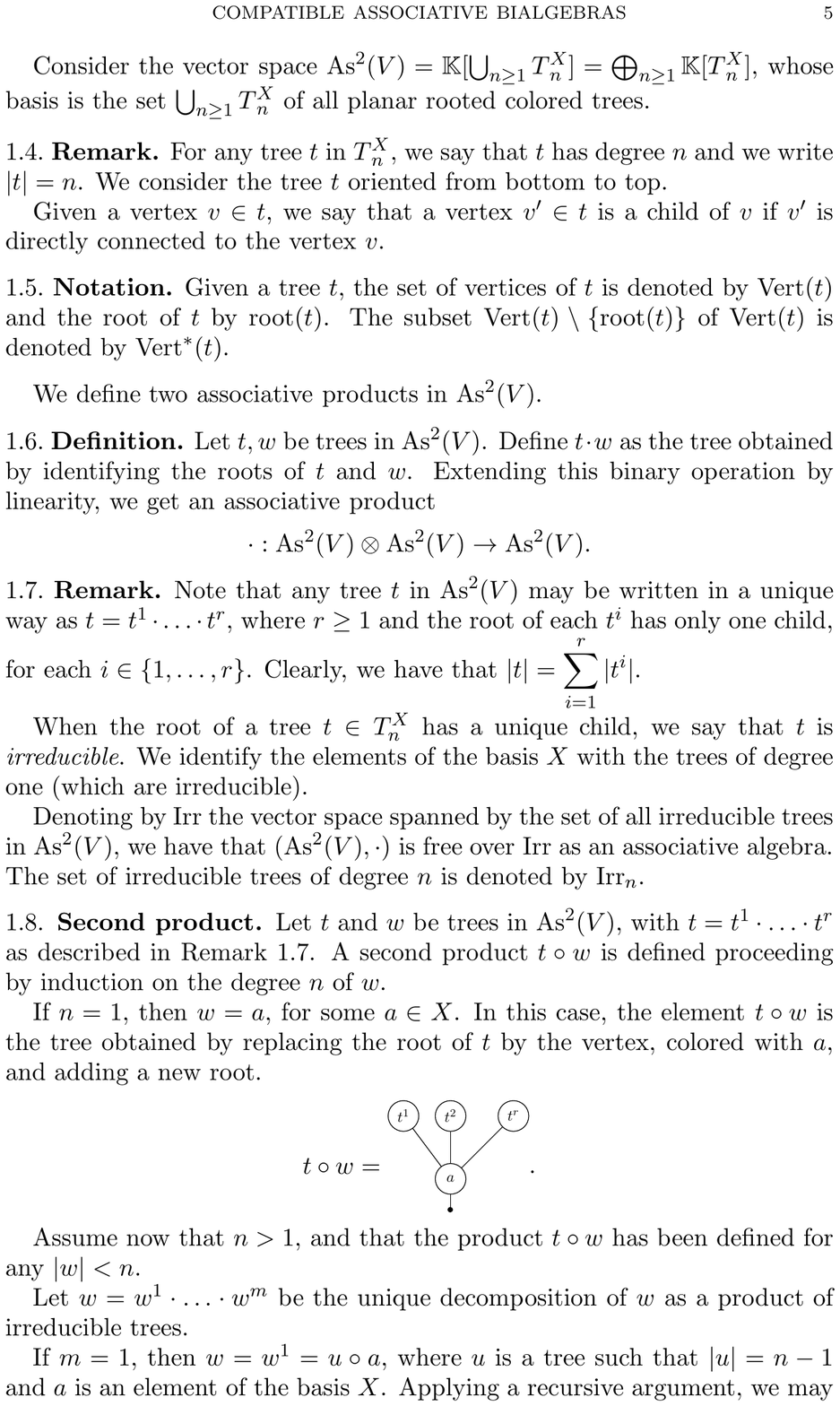}
\end{figure}

Assume now that $n > 1$, and that the product $t \circ w$ has been defined for any $|w| < n$.

Let $w=w^{1} \pun \ldots \pun w^{m}$ be the unique decomposition of $w$ as a product of irreducible trees.

If $m=1$, then $w=w^{1}=u \circ a$, where $u$ is a tree such that $|u|=n-1$ and $a$ is an element of the basis $X$. Applying a recursive argument, we may suppose that $t \circ u$ is already defined. The product $t \circ w$ is the element
$$t \cir w :=t \cir (u \cir a ) = ( t \cir u ) \cir a .$$

For $m > 1$, the element $t \circ w$ is defined by the following formula:

\begin{equation*}
\begin{split}
t \circ w &= \sum _{i=1}^{m}((t \cdot w^1 \cdot \ldots \cdot w^{i-1})\circ w^{i})\cdot \ldots \cdot  w^{m}\\
& \quad -\sum_{i=2}^{m} t\cdot ((w^{1} \pun \ldots \pun w^{i-1})\circ w^{i})\cdot w^{i+1}\cdot \ldots \cdot w^{m}\\
\end{split}
\end{equation*}

Note that the recursive hypothesis states that each term of the previous formula is well defined.

\begin{example}
Let $a_1, \ldots , a_n$ be elements of the basis $X$, with $n\geq 2$. Consider the tree $w$
given by\\\medskip

\begin{figure}[h!]
  \centering
  \includegraphics[width=9cm]{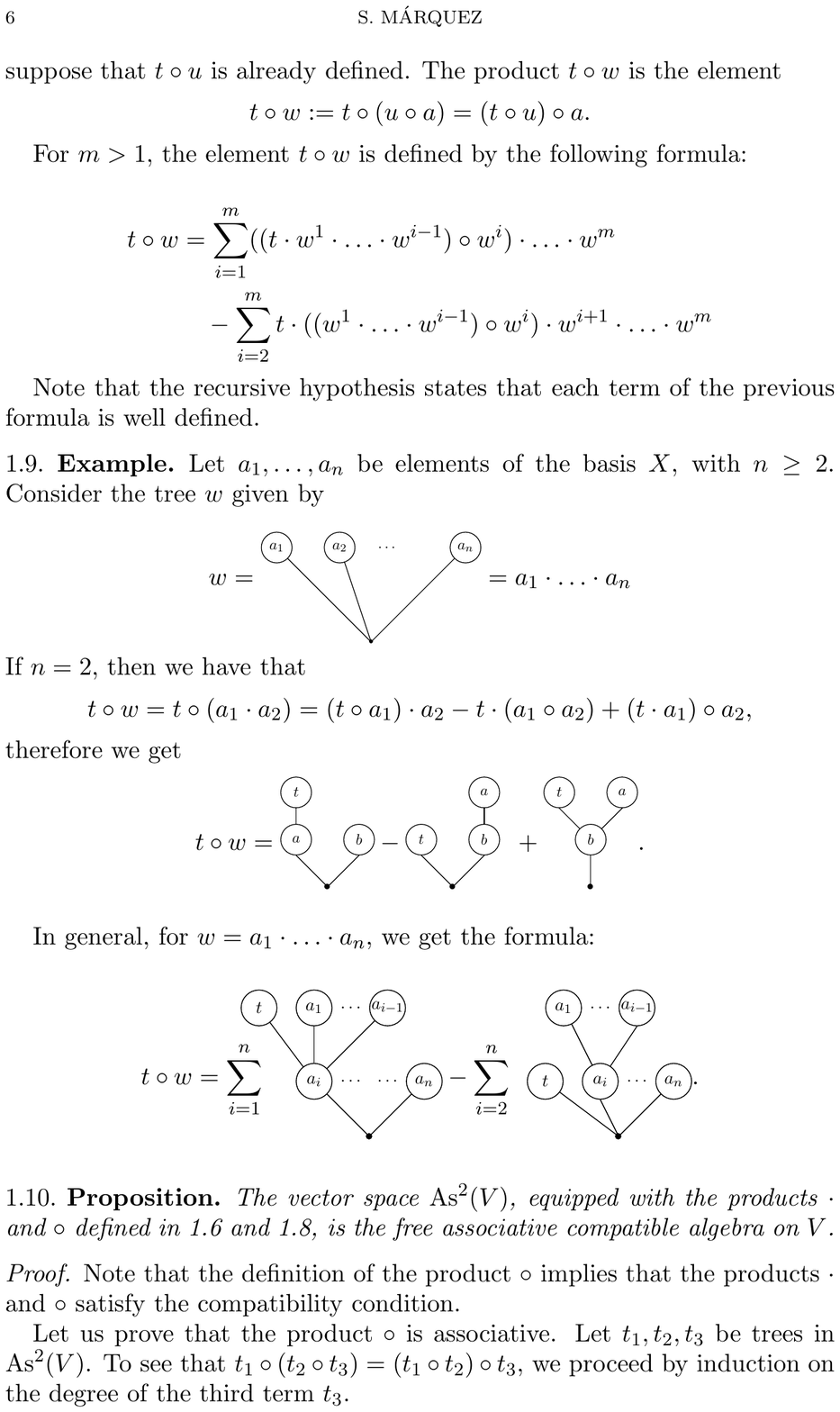}
\end{figure}

\medskip
If $n=2$, then we have that
$$t \cir w =t \cir (a_1 \pun a_2)= (t
\cir a_1) \pun a_2 - t \pun (a_1 \cir a_2) + (t \pun a_1
)\cir a_2, $$
therefore we get

\begin{figure}[h!]
  \centering
  \includegraphics[width=8cm]{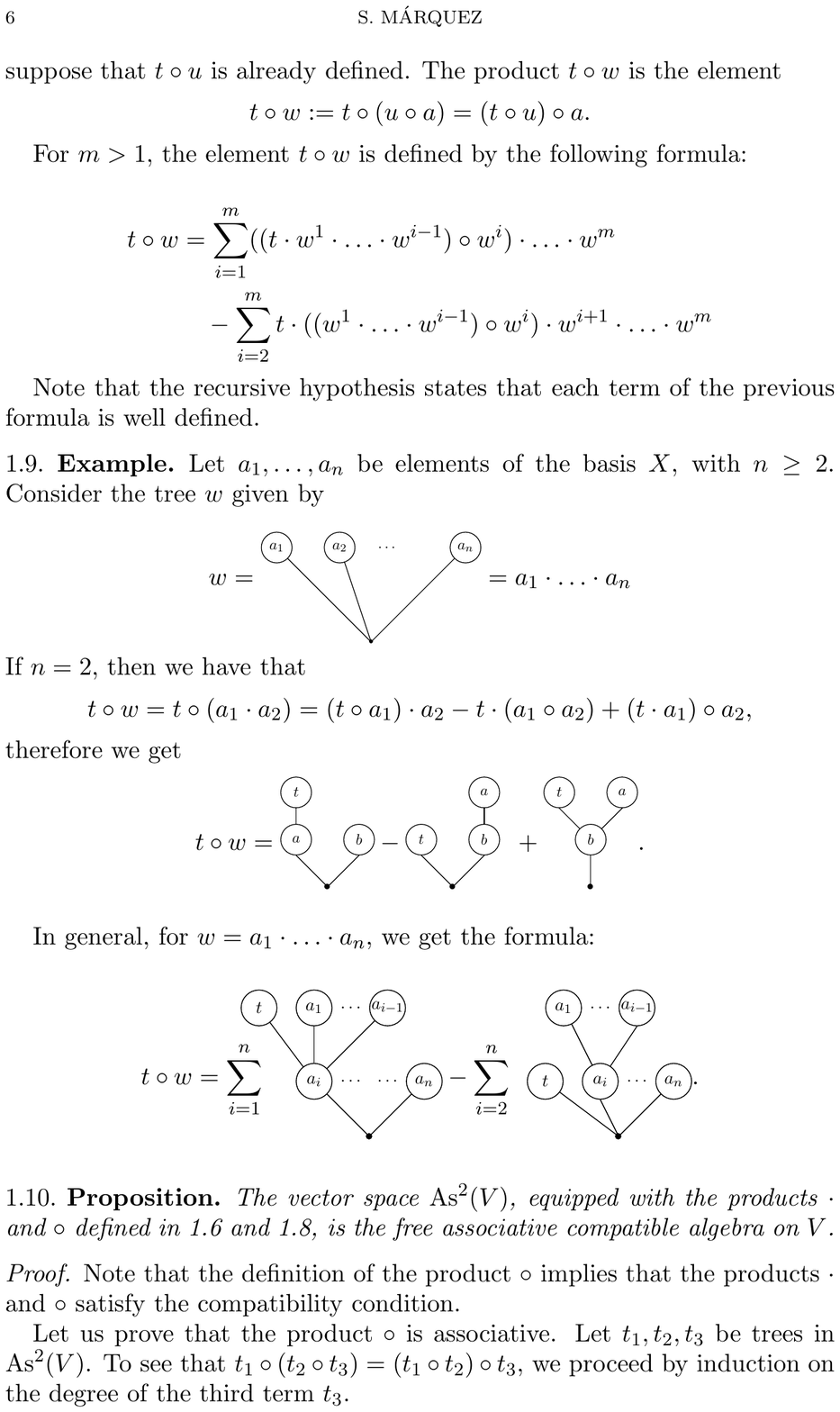}
\end{figure}

In general, for $w=a_1\cdot \ldots \cdot a_n$, we get the formula:
\medskip

\begin{figure}[h!]
  \centering
  \includegraphics[width=10cm]{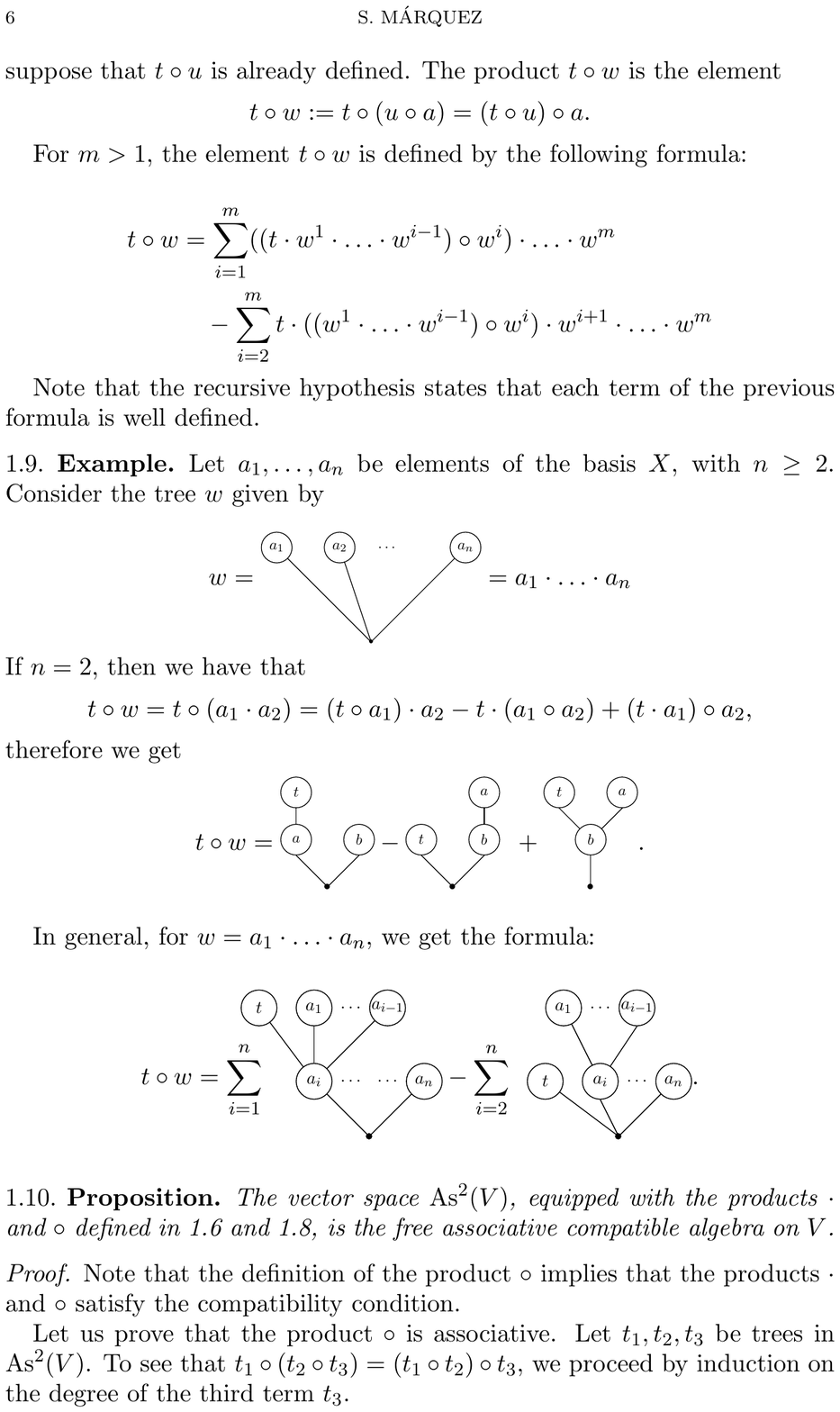}
\end{figure}

\end{example}
\medskip

\begin{proposition} The vector space ${\rm As}^2(V)$, equipped with the products $\pun $ and $\cir$ defined in \ref{ProductoPunto} and \ref{ProductoCirculo}, is the free associative compatible algebra on $V$.
\end{proposition}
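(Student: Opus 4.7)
The plan is to verify first that $({\rm As}^2(V), \pun, \cir)$ is a compatible associative algebra, and then to establish the universal property.

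Step 1 (associativity of $\pun$): This is immediate, since by the preceding remark $({\rm As}^2(V),\pun)$ is already the free associative algebra on the subspace ${\rm Irr}$ of irreducible trees.

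Step 2 (associativity of $\cir$): I would proceed by induction on $|x|+|y|+|z|$, reducing $(x \cir y) \cir z = x \cir (y \cir z)$ to cases according to the irreducible decompositions of $y$ and $z$. When $|z|=1$, the identity $t \cir (u \cir a) = (t \cir u) \cir a$ built into the definition of $\cir$ with a single generator handles the equality directly. In the inductive step one splits according to whether $y$ and $z$ are irreducible or decompose non-trivially under $\pun$, and expands both sides using the recursive formula term by term.

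Step 3 (compatibility): I would verify the equivalent identity
\[
x \cir (y \pun z) + x \pun (y \cir z) = (x \pun y) \cir z + (x \cir y) \pun z,
\]
by induction on $|z|$. The base case $|z|=1$ follows because expanding $t \cir (w^{1} \pun a)$ via the defining formula produces exactly the signed terms needed for cancellation. The inductive step writes $z = z^1 \pun \ldots \pun z^m$ as a product of irreducibles and expands both sides with the recursive definition; the telescoping structure of the formula gives the required cancellations.

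Step 4 (freeness): Every tree in ${\rm As}^2(V)$ is obtained from trees of degree one (i.e.\ elements of $V$) by iterated applications of $\pun$ and $\cir$. Indeed, a reducible tree is by construction a $\pun$-product of irreducibles, and any irreducible tree $t$ of degree $n>1$ can be written as $t = u \cir a$ with $a \in X$ and $|u|=n-1$. Hence $V$ generates ${\rm As}^2(V)$ as a compatible associative algebra, so any linear map $f:V \to A$ to a compatible associative algebra $A$ admits at most one extension to a morphism ${\rm As}^2(V) \to A$. Existence then follows from a dimension count: the number of colored planar rooted trees of degree $n$ is $c_n \cdot (\dim V)^n$, which agrees with Dotsenko's formula for the dimension in degree $n$ of the free ${\rm As}^2$-algebra on $V$.

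The main obstacle is the bookkeeping in Steps 2 and 3. The recursive formula for $\cir$ is a signed telescoping sum, and matching positive and negative contributions across nested decompositions is delicate; the inductive hypothesis ensures well-definedness of each term, but exhibiting the precise cancellations is what makes the verification technical.
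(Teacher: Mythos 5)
Your outline is essentially correct and Steps 1--2 follow the same strategy as the paper (induction on the last argument, splitting into the irreducible case $t_3=w\cir a$ and the reducible case $t_3=w\pun z$), but there are two points worth comparing. First, the order of Steps 2 and 3 should be reversed: the paper's proof of associativity of $\cir$ in the reducible case $t_3=w\pun z$ explicitly invokes the compatibility relation to rewrite $(t_1\cir t_2)\pun w$, so you need compatibility in hand before (or simultaneously with, in a joint induction) the associativity of $\cir$. In the paper compatibility costs nothing: the recursive formula defining $t\cir w$ for $w=w^1\pun\ldots\pun w^m$ is precisely the relation $x\cir(y\pun z)=(x\cir y)\pun z+(x\pun y)\cir z-x\pun(y\cir z)$ iterated, so compatibility is built into the definition rather than proved by a separate induction on $\vert z\vert$ as you propose; your induction would work but is redundant. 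Second, your Step 4 genuinely diverges from the paper: the paper constructs the universal extension $\widetilde f:{\rm As}^2(V)\to A$ explicitly by recursion on trees and proves its uniqueness, staying self-contained, whereas you prove only uniqueness directly (via generation by $V$, which you argue correctly) and get existence from the surjection of the abstract free algebra onto ${\rm As}^2(V)$ together with Dotsenko's dimension formula $\dim({\rm As}^2)_n=c_n$. This is a legitimate trade-off --- it sidesteps the (nontrivial, and in fact glossed over in the paper) verification that the recursively defined $\widetilde f$ respects $\cir$ on arbitrary pairs of trees --- but it imports an external result, and as stated the count $c_n(\dim V)^n$ only makes sense for $\dim V<\infty$; for general $V$ you should either compare the multihomogeneous components or note that the operad is regular and reduce to the one-dimensional case.
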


\begin{proof}
Note that the definition of the product $\circ$ implies that the products $\cdot$ and $\circ$ satisfy the compatibility condition.

Let us prove that the product $\circ$ is associative. Let $t_1, t_2, t_3$ be trees in ${\rm As}^2(V)$. To see that $t_1\circ (t_2 \circ t_3)= (t_1\circ t_2) \circ t_3$, we proceed by induction on the degree of the third term $t_3$.

If the degree of $t_3$ is one, the assertion is follows easily from Definition \ref{ProductoCirculo}.

Suppose that $n=|t_3| > 1$. If $t_3$ is an irreducible tree, then $t_3=w \circ a$, where $w$ is a tree with $|w|=n-1$, and $a$ is an element of the basis $X$. Applying a recursive argument, we get the following identities:
\[\begin{array}{rll} t_1 \circ (t_2 \circ t_3)&=t_1 \circ (t_2 \circ (w \circ a))& \\
&=t_1 \circ ((t_2 \circ w) \circ a)\\
&=(t_1 \circ (t_2 \circ w)) \circ a\\
&=((t_1 \circ t_2) \circ w) \circ a\\
&=(t_1 \circ t_2) \circ (w \circ a)\\
&=(t_1 \circ t_2) \circ t_3,\\
\end{array}\]
which imply the result.

Suppose now that $t_3=w \pun z$, where $w$ and $z$ are trees of  degree smaller than $|t_3|$.

By the compatibility condition and a recursive argument, we get that:
\[\begin{array}{rll}
t_1 \circ (t_2 \circ t_3)&=t_1\circ (t_2 \circ (w\pun z))& \\
&=t_1\circ ((t_2 \circ w)\pun z -t_2\pun(w \circ z)+ (t_2 \pun w)\circ z) \\
&=(t_1\circ(t_2\circ w))\pun z-t_1\pun ((t_2\circ w)\circ z)+(t_1\pun (t_2 \circ w))\circ z \\
&\quad -(t_1\circ t_2 )\pun (w \circ z)+ t_1 \pun (t_2 \circ (w \cir z))-(t_1 \pun t_2 )\circ (w \circ z )\\
&\quad +t_1\circ ((t_2 \pun w)\circ z)\\
&=((t_1 \circ t_2) \circ w)\pun z-(t_1 \circ t_2 )\pun (w \circ z )\\
&\quad +(t_1 \circ (t_2 \pun w )-(t_1 \pun t_2 ) \circ w + t_1 \pun (t_2 \circ w))\circ z.\\
\end{array}\]

As $(t_1\circ t_2)\pun w = t_1 \circ (t_2 \pun w )-(t_1 \pun t_2 ) \circ w + t_1 \pun (t_2 \circ w)$, we conclude that

\[\begin{array}{rll}
t_1 \circ (t_2 \circ t_3)&=((t_1 \circ t_2) \circ w)\pun z-(t_1 \circ t_2 )\pun (w \circ z )+ ((t_1\circ t_2)\pun w)\circ z& \\
&=(t_1 \circ t_2)\circ (w \pun z )\\
&=(t_1 \circ t_2)\circ t_3.\\
\end{array}\]

To end the proof, we need to see that ${\rm As}^2(V)$ is free as associative compatible algebra. Let $A$ be an associative compatible algebra and let $f :V \rightarrow A$ be
a linear map. The homomorphism $\widetilde{f} :{\rm As}^2(V) \rightarrow A$ is defined in a recursive way.
\medskip

Let $t$ be a tree in ${\rm As}^2(V)$. If $|t|=1$ then $t=a$ with $a \in X$ and therefore $\widetilde{f}(t)=f(a)$.
\medskip

Suppose that $|t|>1$. If $t=t'\circ a$, for some $a \in X$, is irreducible,  we define $$\widetilde{f}(t)=\widetilde{f}(t')\circ f(a),$$ which is well defined by a recursive argument.

If $t=t^1 \pun \ldots \pun t^r$ for some $r >1$, then we can assume that $\widetilde{f}(t^i)$ is defined, for $1\leq i \leq r$, and set $\widetilde{f}(t)=\widetilde{f}(t^1) \pun \ldots \pun \widetilde{f}(t^r)$.\medskip

To see that $\widetilde{f}$ is unique, consider $g:{\rm As}^2(V) \rightarrow A$, a homomorphism of compatible associative algebras such that $g(a)=f(a)$, for $a \in V$. Let $t$ be a tree in ${\rm As}^2(V)$. If $|t|=1$, then $t=a$, with $a \in X$. So, by definition of $\widetilde{f}$, $g(t)=\widetilde{f}(t)$. Suppose that $|t|>1$. We have that $t=t'\circ a$, for some $a \in X$, or $t=t^1 \pun \ldots \pun t^r$, for some $r >1$. Applying a recursive argument, we have that $g(t)=\widetilde{f}(t)$. This show that $\widetilde{f}$ is unique, which ends the proof.

\end{proof}

\section{Compatible infinitesimal bialgebras}\label{SeccionComInfBial}
In this section, we introduce {\it compatible infinitesimal bialgebras}, which uses the notion of unital infinitesimal bialgebra introduced by J.-L Loday and M. Ronco in \cite{LodayRonco-On the structure of cofree Hopf algebras}. To work in the more general context, we do not assume the existence of unity. So, an infinitesimal bialgebra is an associative algebra $(H, \cdot)$ equipped with a coassociative coproduct $\Delta :H\longrightarrow H\otimes H$ satisfying
$$\Delta(x \cdot y)=  x_{(1)}\otimes (x_{(2)} \cdot y) + (x \cdot y_{(1)})\otimes y_{(2)} + x\otimes y,$$
for $x,y\in H$, with $\Delta (x)= x_{(1)}\otimes x_{(2)}$  and $\Delta(y)=y_{(1)}\otimes y_{(2)}$ for $x, y \in H$.\medskip

An element $x \in H$ is called {\it primitive} when $\Delta(x)=0$.

\begin{definition} \label{InfComp}{\rm A {\it  compatible infinitesimal bialgebra} over $\KK$ is an associative compatible algebra $(H, \pun ,\cir )$ equipped with a coassociative coproduct $\Delta :H\longrightarrow H\otimes H$ such that $(H,\pun, \Delta )$ and $(H,\cir ,\Delta )$ are both unital infinitesimal bialgebras.}\end{definition}

\begin{lemma}\label{BuenaDefUIAA} The notion of compatible infinitesimal bialgebra is well-defined.\end{lemma}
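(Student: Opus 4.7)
The plan is to verify that the three conditions in Definition \ref{InfComp}---the infinitesimal bialgebra relation for $\pun$, the infinitesimal bialgebra relation for $\cir$, and the compatibility relation between $\pun$ and $\cir$---are internally consistent. The natural consistency test is to apply $\Delta$ to both sides of the identity of Remark \ref{compatibilidad},
\[
x \cir (y \pun z) + x \pun (y \cir z) = (x \cir y) \pun z + (x \pun y) \cir z,
\]
and to check that both sides give the same element of $H \otimes H$.

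First I would expand $\Delta(x \cir (y \pun z))$ using the infinitesimal relation for $\cir$ with arguments $(x,\, y \pun z)$, then apply the infinitesimal relation for $\pun$ to rewrite the occurrence of $\Delta(y \pun z)$ in the resulting sum. The analogous expansions are carried out for $\Delta(x \pun (y \cir z))$, $\Delta((x \cir y) \pun z)$ and $\Delta((x \pun y) \cir z)$, always expanding the outer product first. Each such expansion produces a sum of tensors whose first factor is a Sweedler component of $x$, $y$ or $z$ (or a product of two such components), and whose second factor is one of $x,y,z$ or a mixed product involving them.

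After grouping the resulting terms according to the shape of the first tensor factor, equality of the two sides reduces to repeated applications of the compatibility identity itself, now invoked \emph{inside the second tensor slot}, together with coassociativity of $\Delta$ and associativity of $\pun$ and $\cir$. The boundary pieces (those terms coming from the extra $x \otimes y$ summands in the two infinitesimal relations) match trivially by linearity of $\Delta$.

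The main obstacle, in my view, is purely bookkeeping: iterated application of the two infinitesimal axioms produces many Sweedler-indexed terms, and they must be collected carefully by the shape of their first factor before the compatibility relation can be invoked on the second factor. No new idea is required; the verification is driven entirely by the linearity of $\Delta$, the two infinitesimal axioms, and the single compatibility relation between $\pun$ and $\cir$.
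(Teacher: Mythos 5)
Your plan is correct and is essentially the paper's own proof: the paper expands $\Delta$ of the four terms $(x\pun y)\cir z$, $(x\cir y)\pun z$, $x\pun(y\cir z)$ and $x\cir(y\pun z)$ by applying the outer infinitesimal relation and then the inner one, and then cancels everything using the compatibility relation inside the tensor factors, exactly as you describe. The only small imprecision is that the compatibility identity must also be invoked in the \emph{first} tensor slot (for the terms $(x\pun y)\cir z_{(1)}\otimes z_{(2)} + (x\cir y)\pun z_{(1)}\otimes z_{(2)}$), not only in the second, and coassociativity of $\Delta$ is not actually needed.
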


\begin{proof} A direct computation shows that:
\[\begin{array}{rl} (1) \quad \Delta((x\cdot y)\circ z) &= \quad x_{(1)}\otimes (x_{(2)}\cdot y)\circ z+x\cdot y_{(1)}\otimes y_{(2)}\circ z+x\otimes (y\circ z) \\
& +(x\cdot y)\circ z_{(1)}\otimes z_{(2)}+(x\cdot y)\otimes z,\\
(2) \quad \Delta((x\circ y)\cdot z) &= \quad x_{(1)}\otimes (x_{(2)}\circ y)\cdot z+x\circ y_{(1)}\otimes y_{(2)}\cdot z+x\otimes (y\cdot z) \\
& +(x\circ y)\cdot z_{(1)}\otimes z_{(2)}+(x\circ y)\otimes z,\\
(3) \quad \Delta(x\cdot (y\circ z)) &= \quad x_{(1)}\otimes x_{(2)}\cdot( y\circ z)+x\cdot y_{(1)}\otimes y_{(2)}\circ z+x\otimes (y\circ z) \\
& +x\cdot (y\circ z_{(1)})\otimes z_{(2)}+(x\cdot y)\otimes z,\\
(4) \quad \Delta(x\circ( y\cdot z)) &= \quad x_{(1)}\otimes x_{(2)}\circ( y\cdot z)+x\circ y_{(1)}\otimes y_{(2)}\cdot z+x\otimes (y\cdot z) \\
& +x\circ (y\cdot z_{(1)})\otimes z_{(2)}+(x\circ y)\otimes z.\\
\end{array}\]
Using the compatibility condition between the products $\cdot$ and $\circ$, we get that:
\begin{enumerate}
  \item $x_{(1)}\otimes (x_{(2)}\cdot y)\circ z+x_{(1)}\otimes (x_{(2)}\circ y)\cdot z= x_{(1)}\otimes x_{(2)}\cdot( y\circ z)+x_{(1)}\otimes x_{(2)}\circ( y\cdot z),$

  \item $(x\cdot y)\circ z_{(1)}\otimes z_{(2)}+(x\circ y)\cdot z_{(1)}\otimes z_{(2)}=x\cdot (y\circ z_{(1)})\otimes z_{(2)}+x\circ (y\cdot z_{(1)})\otimes z_{(2)},$
\end{enumerate}
which implies that $$\Delta((x\cdot y)\circ z+(x\circ y)\cdot z)=\Delta(x\cdot (y\circ z)+x\circ( y\cdot z)).$$
\end{proof}
\medskip

\begin{proposition}\label{$As^2(V)$ ComoCompInfBialgebra} Let $V$ be a vector space, the free associative compatible algebra $\rm{As}^2(V)$ has a natural structure of compatible infinitesimal bialgebra.\end{proposition}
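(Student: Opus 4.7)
The plan is to define the coproduct $\Delta$ on ${\rm As}^2(V)$ recursively on the number of vertices, using the two canonical decompositions a tree admits. For a generator $a \in X$ I set $\Delta(a) = 0$. For a reducible tree with factorization $t = t^1 \pun \cdots \pun t^r$ into irreducibles ($r \geq 2$) as in Remark \ref{arbolesIrreEn$As^2$}, I put
$$\Delta(t) = \sum_{i=1}^{r-1}(t^1 \pun \cdots \pun t^i) \otimes (t^{i+1} \pun \cdots \pun t^r),$$
the deconcatenation along the $\pun$-irreducible factorization. For an irreducible tree $t$ there is a unique pair $(s,a)$ with $a \in X$ and $t = s \cir a$; I set
$$\Delta(t) = s_{(1)} \otimes (s_{(2)} \cir a) + s \otimes a,$$
which is exactly the desired $\cir$-infinitesimal formula in the case $y = a$ (where $\Delta(a)=0$). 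Since the two kinds of decomposition are mutually exclusive and unique, $\Delta$ is well defined.

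Coassociativity of $\Delta$ then follows by induction on $|t|$: on reducible trees it is the standard coassociativity of the deconcatenation coproduct on the free associative algebra $({\rm As}^2(V),\pun)$ on ${\rm Irr}$, while on an irreducible $t = s \cir a$ it reduces, after expanding both $(\Delta \otimes \Id)\Delta(t)$ and $(\Id \otimes \Delta)\Delta(t)$ via the recursive rule, to coassociativity of $\Delta$ on $s$. The $\pun$-infinitesimal relation
$$\Delta(x \pun y) = x_{(1)} \otimes (x_{(2)} \pun y) + (x \pun y_{(1)}) \otimes y_{(2)} + x \otimes y$$
is then automatic: it is the standard identity satisfied by the deconcatenation coproduct on a free associative algebra, applied to the $\pun$-factorization of $x \pun y$.

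The heart of the argument is the $\cir$-infinitesimal relation, which I establish by induction on $|y|$. The base case $y = a \in X$ holds by construction. When $y = w \cir a$ is irreducible of higher degree, I use $\cir$-associativity to write $x \cir y = (x \cir w) \cir a$, apply the irreducible rule to this tree, and expand $\Delta(x \cir w)$ by the inductive hypothesis; the resulting terms regroup into the desired form once one notes that $\Delta(y) = w_{(1)} \otimes (w_{(2)} \cir a) + w \otimes a$. The main obstacle is the reducible case $y = y^1 \pun y^2$. Here I invoke the compatibility identity of Remark \ref{compatibilidad} to write
$$x \cir (y^1 \pun y^2) = (x \cir y^1) \pun y^2 + (x \pun y^1) \cir y^2 - x \pun (y^1 \cir y^2),$$
apply $\Delta$ to each of the three summands using the $\pun$-relation and the inductive hypothesis for $|y^i| < |y|$, and carefully regroup so that each block of the shape $(v \cir y^1) \pun y^2 + (v \pun y^1) \cir y^2 - v \pun (y^1 \cir y^2)$, appearing for various Sweedler components $v$ (e.g.\ $v = x_{(2)}$, or pieces coming from $y^2_{(1)}$ in a fixed tensor slot), collapses back via compatibility to $v \cir (y^1 \pun y^2)$. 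Combining these collapses with the $\pun$-expansion of $\Delta(y^1 \pun y^2)$ reproduces exactly the $\cir$-infinitesimal formula for $y$, and the general reducible case follows by iteration on the number of irreducible factors of $y$.
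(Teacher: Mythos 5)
Your definition of $\Delta$ on reducible trees is not the right coproduct, and this breaks the rest of the argument. Writing $t=t^1\pun\cdots\pun t^r$ with each $t^i$ irreducible and setting $\Delta(t)=\sum_{i=1}^{r-1}(t^1\pun\cdots\pun t^i)\otimes(t^{i+1}\pun\cdots\pun t^r)$ is the deconcatenation coproduct of the free algebra on $\mathrm{Irr}$, and that coproduct satisfies the $\pun$-infinitesimal identity precisely because its generators (the irreducible trees) are primitive for it. But your rule on irreducible trees makes them non-primitive: already $\Delta(a\cir b)=a\otimes b\neq 0$. The two halves of your definition are therefore incompatible. Concretely, for $x=a\cir b$ and $y=c$ the $\pun$-relation forces $\Delta(x\pun y)=a\otimes(b\pun c)+(a\cir b)\otimes c$, whereas your reducible rule yields only $(a\cir b)\otimes c$; and on $t=(a\cir b)\pun(c\cir d)$ your $\Delta$ is not even coassociative, since $(\Delta\otimes\Id)\Delta(t)=a\otimes b\otimes(c\cir d)$ while $(\Id\otimes\Delta)\Delta(t)=(a\cir b)\otimes c\otimes d$. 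So the claims that coassociativity and the $\pun$-relation are ``automatic'' both fail.

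The fix is to use, for $t=t'\pun t''$, the formula forced by the $\pun$-infinitesimal relation itself, namely $\Delta(t)=t'_{(1)}\otimes(t'_{(2)}\pun t'')+(t'\pun t''_{(1)})\otimes t''_{(2)}+t'\otimes t''$ (checking that this is independent of the chosen splitting of $t$); equivalently, the closed formula of Proposition \ref{FormulaCoproducto}, which splits the $n$ non-root vertices of $t$ at every position of the level order, not merely between irreducible factors. This is what the paper does. With that correction the remainder of your plan is sound, and in fact more explicit than the paper's proof: your induction on $|y|$ for the $\cir$-relation, reducing via $x\cir(w\cir a)=(x\cir w)\cir a$ in the irreducible case and via the compatibility identity of Remark \ref{compatibilidad} in the reducible case, is exactly the verification the paper delegates to Lemma \ref{BuenaDefUIAA} without carrying it out in full.
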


\begin{proof}
The coproduct $\Delta : {\rm As}^2(V)\rightarrow {\rm As}^2(V) \otimes {\rm As}^2(V)$ is defined by induction on  the degree of a tree $t$ in ${\rm As}^2(V)$.

For $t=a \in X$, its image is $\Delta(t)=0$. When $|t|>1$, we consider two cases:
\begin{enumerate}
\item for $t=t' \circ a$, with $a \in X$, we define  $$\Delta(t)=t'_{(1)}\otimes t'_{(1)}\circ a + t' \otimes a.$$
\item for $t=t' \pun t''$ with $|t'|<|t| \text{ and }|t''|< |t|$, we have that $$\Delta(t)=t'_{(1)}\otimes t'_{(2)}\pun t'' + t' \pun t''_{(1)}\otimes t''_{(2)}+t' \otimes t''.$$
\end{enumerate}

Lemma \ref{BuenaDefUIAA} and the inductive hypothesis, state that $\Delta$ is well defined. Note that if $\Delta(t)=t_{(1)} \otimes t_{(2)}$ then  $|t_{(1)}|<|t| \text{ and }|t_{(2)}| < |t|$.

To see that $\Delta$ is coassociative, we proceed by induction on degree of $t$. Let $t$ be a tree. For $|t|=1$ the result is immediate.\\
For  $|t|>1$, we consider two case:

First, if $t$ is an irreducible tree, then  $t=t'\circ a$, with $a \in X$. So, we have that:
\[\begin{array}{rll}
(\Delta \otimes {\rm Id})\Delta (t)&=(\Delta \otimes {\rm Id})(t'_{(1)}\otimes t'_{(2)} \circ a+t'\otimes a)& \\
&=\Delta(t'_{(1)})\otimes t'_{(2)} \circ a+\Delta(t')\otimes a \\
&=t'_{(1)(1)}\otimes t'_{(1)(2)}\otimes t'_{(2)}\circ a+t'_{(1)}\otimes t'_{(2)}\otimes a. \\
\end{array}\]

Applying the recursive hypothesis to $t'$, we  write
$$(\Delta \otimes {\rm Id})\Delta (t)=t'_{(1)}\otimes t'_{(2)}\otimes t'_{(3)}\circ a+t'_{(1)}\otimes t'_{(2)}\otimes a.$$

On the other hand, using a similar argument to computer $({\rm Id} \otimes \Delta)\Delta(t)$, we have that
\[\begin{array}{rll}
({\rm Id} \otimes \Delta)\Delta (t)&=({\rm Id}\otimes \Delta)(t'_{(1)}\otimes t'_{(2)} \circ a+t'\otimes a& \\
&=t'_{(1)}\otimes \Delta (t'_{(2)} \circ a)+t'\otimes \Delta (a) \\
&=t'_{(1)}\otimes t'_{(2)(1)}\otimes t'_{(2)(2)}\circ a+t'_{(1)}\otimes t'_{(2)}\otimes a \\
&=t'_{(1)}\otimes t'_{(2)}\otimes t'_{(3)}\circ a+t'_{(1)}\otimes t'_{(2)}\otimes a ,\\
\end{array}\]
which gives the expected result.

Second, if $t$ is reducible tree, then $t=t'\pun t''$, with $|t'|<|t| \text{ and }|t''|< |t|$. Applying the recursive hypothesis to $t'$ and $t''$, we have that
\[\begin{array}{rl}
(\Delta \otimes {\rm Id})\Delta (t)&=\quad (\Delta\otimes {\rm Id} )(t'_{(1)}\otimes t'_{(2)} \pun t''+t'\pun t''_{(1)}\otimes t''_{(2)}+t'\otimes t'') \\
&=\quad t'_{(1)(1)}\otimes t'_{(1)(2)}\otimes t'_{(2)} \pun t''+t'_{(1)}\otimes t'_{(2)}\pun t''_{(1)}\otimes t''_{(2)} \\
& + t'\pun t''_{(1)(1)}\otimes t''_{(1)(2)}\otimes t''_{(2)}+t'\otimes t''_{(1)}\otimes t''_{(2)}+ t'_{(1)}\otimes t'_{(2)}\otimes t''\\
&=\quad t'_{(1)}\otimes t'_{(2)}\otimes t'_{(3)} \pun t''+t'_{(1)}\otimes t'_{(2)}\pun t''_{(1)}\otimes t''_{(2)}\\
& + t'\pun t''_{(1)}\otimes t''_{(2)}\otimes t''_{(3)}+t'\otimes t''_{(1)}\otimes t''_{(2)}+ t'_{(1)}\otimes t'_{(2)}\otimes t''\\
\end{array}\]
and,
\[\begin{array}{rl}
({\rm Id} \otimes \Delta)\Delta (t)&=\quad ({\rm Id}\otimes \Delta)(t'_{(1)}\otimes t'_{(2)} \pun t''+t'\pun t''_{(1)}\otimes t''_{(2)}+t'\otimes t'') \\
&=\quad t'_{(1)}\otimes t'_{(2)(1)}\otimes t'_{(2)(2)} \pun t''+t'_{(1)}\otimes t'_{(2)}\pun t''_{(1)}\otimes t''_{(2)} \\
& + t'_{(1)}\otimes  t'_{(2)}\otimes t''+t'\cdot t''_{(1)}\otimes t''_{(2)(1)}\otimes t''_{(2)(2)}+ t'\otimes t''_{(2)}\otimes t''_{(2)}\\
&=\quad t'_{(1)}\otimes t'_{(2)}\otimes t'_{(3)} \pun t''+t'_{(1)}\otimes t'_{(2)}\pun t''_{(1)}\otimes t''_{(2)} \\
& + t'_{(1)}\otimes  t'_{(2)}\otimes t''+t'\cdot t''_{(1)}\otimes t''_{(2)}\otimes t''_{(3)}+ t'\otimes t''_{(2)}\otimes t''_{(2)}.\\
\end{array}\]

So, we conclude that
$$(\Delta \otimes {\rm Id})\Delta (t)=({\rm Id} \otimes \Delta)\Delta (t),$$ which ends the proof.

\end{proof}

\subsection{Formula for the coproduct $\Delta$}
We want to give an explicit formula for the coproduct $\Delta$, for which we previously describe an order on the vertices of a tree.

Given a tree $t$, we consider the set ${\rm Vert}(t)$ ordered by the {\it level order}, that is, the vertices of $t$ are ordered by reading the vertices of $ t $ from left to right and from top to bottom. For instance, if $t$ is the tree

\begin{figure}[h!]
  \centering
  \includegraphics[width=6cm]{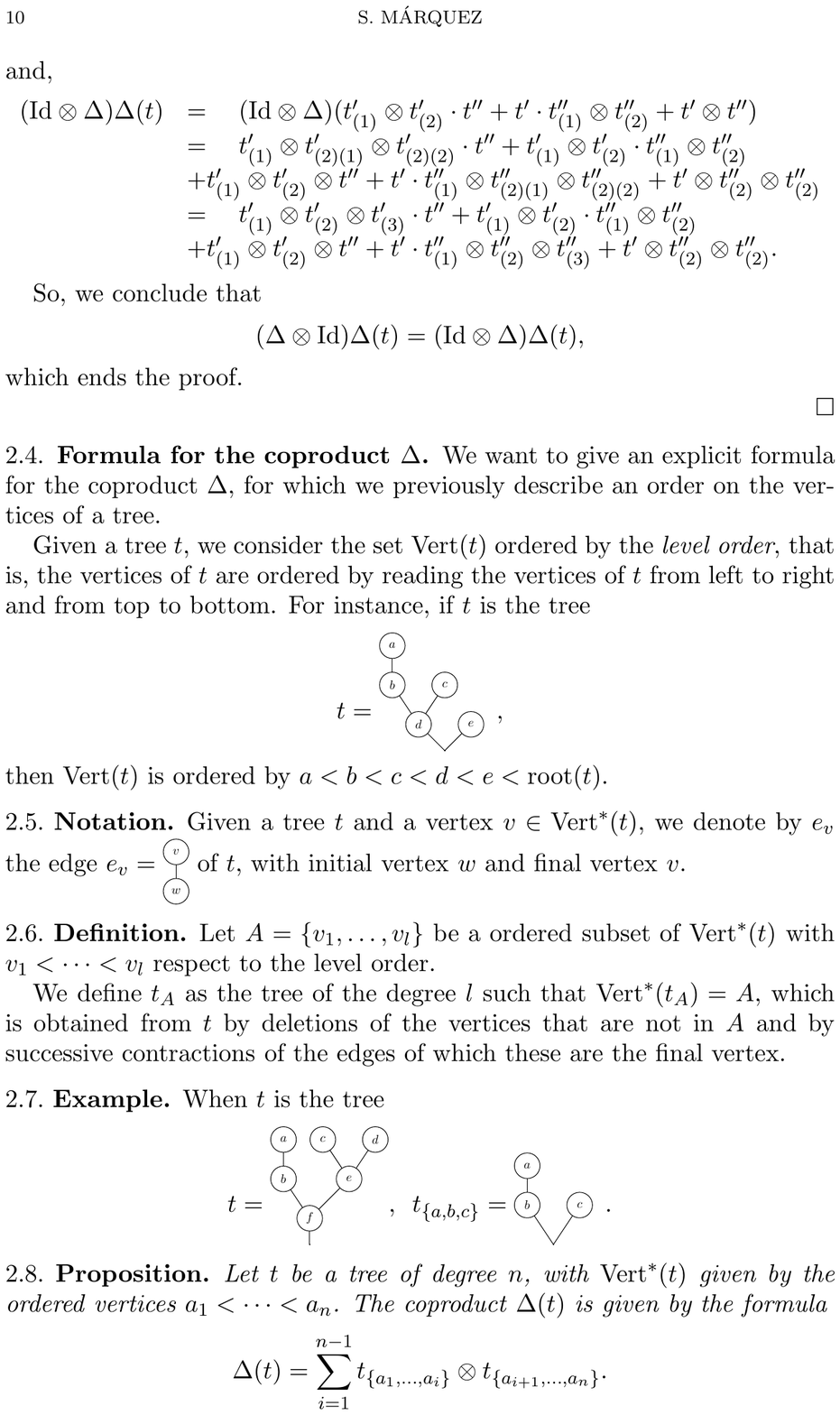}
\end{figure}

then ${\rm Vert}(t)$ is ordered by $a<b<c<d<e<{\rm root}(t)$.

\begin{notation}
Given a tree $t$ and a vertex $v \in {\rm Vert}^{\ast}(t)$, we denote by $e_{v}$ the edge
of $t$ with final vertex $v$.
\end{notation}

\begin{definition}\label{ContraccionDunArbol}
Let $A=\{v_1,\ldots,v_l\}$ be a ordered subset of ${\rm Vert}^{\ast}(t)$ with $v_1< \cdots <v_l$ respect to the level order.

We define $t_A$ as the tree of the degree $l$ such that ${\rm Vert}^{\ast}(t_A)=A$, which is obtained from $t$ by deletions of the vertices that are not in $A$ and by successive contractions of the edges of which these are the final vertex.
\end{definition}

\newpage

\begin{example}
When $t$ is the tree

\begin{figure}[h!]
  \centering
  \includegraphics[width=8cm]{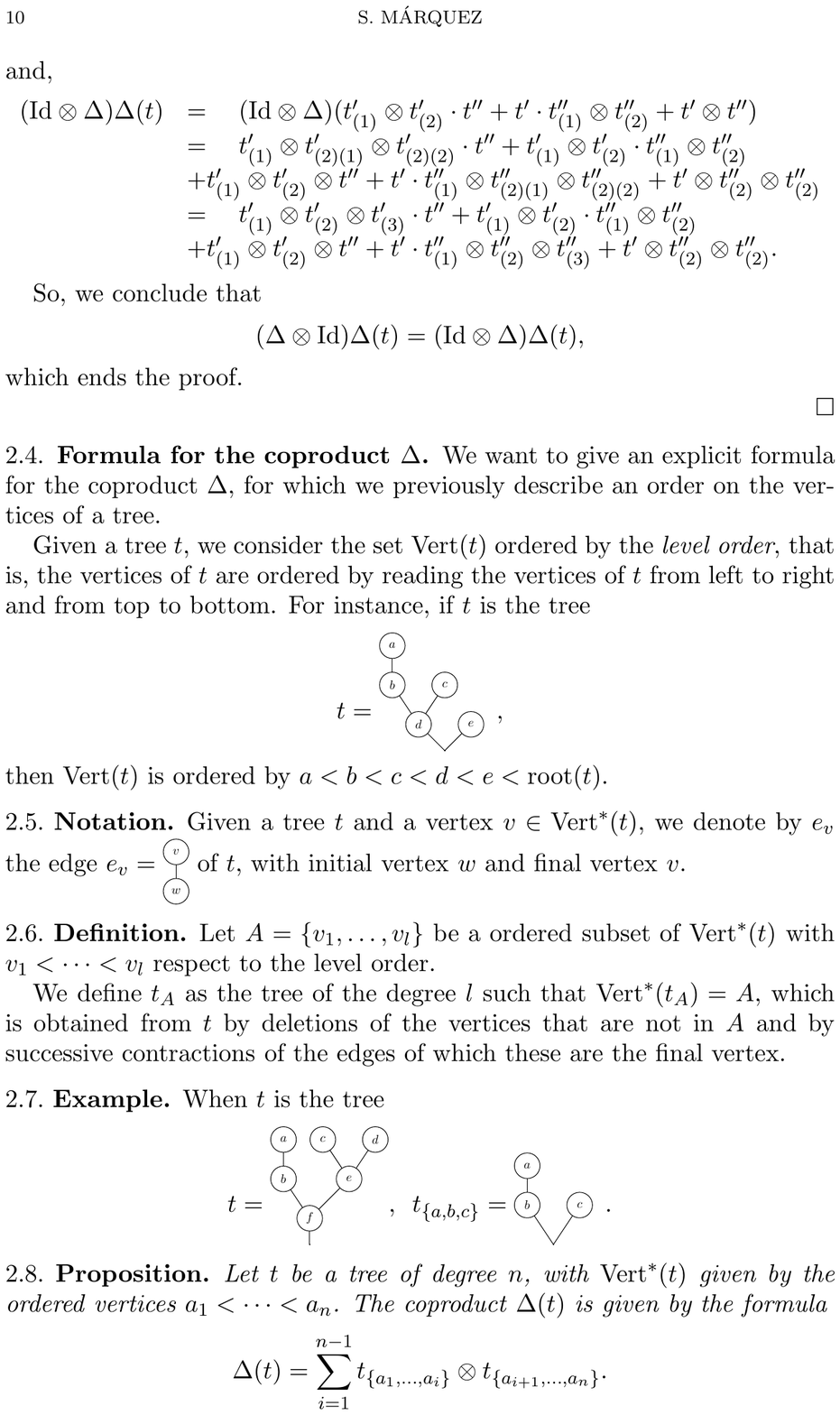}
\end{figure}

\end{example}

\begin{proposition}\label{FormulaCoproducto}
Let $t$ be a tree of degree $n$, with ${\rm Vert}^{\ast}(t)$  given by the ordered vertices $a_1<\cdots <a_n$. The coproduct $\Delta(t)$ is given by the formula
$$\Delta(t)=\sum_{i=1}^{n-1}t_{\{a_1, \ldots,a_i\}}\otimes t_{\{a_{i+1}, \ldots,a_{n}\}}.$$
\end{proposition}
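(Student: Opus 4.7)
The plan is to induct on the degree $n=|t|$. The base case $n=1$ is immediate since $\Delta(t)=0$ and the sum on the right is empty. For $n\geq 2$ the strategy is to match the displayed formula against the recursive definition of $\Delta$ given in the preceding proposition, using its two cases: the irreducible case $t=t'\circ a$ with $a\in X$, and the reducible case $t=t'\cdot t''$ with $|t'|,|t''|<n$.

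The first ingredient is a combinatorial description of the level order on $t$ in terms of the orders on its pieces. In the irreducible case $t=t'\circ a$, the vertex $a$ sits at the bottom of ${\rm Vert}^{\ast}(t)$, directly above the root, so it is the last non-root vertex in level order; hence $a_n=a$ and $a_1<\cdots<a_{n-1}$ reproduces the level order of $t'$. In the reducible case $t=t'\cdot t''$ with $p=|t'|$ and $q=|t''|$, identifying the roots preserves depths, and because the $t'$-portion sits to the left of the $t''$-portion, the level order on $t$ is the concatenation of the level orders: $a_1,\ldots,a_p$ enumerate ${\rm Vert}^{\ast}(t')$ in its own level order, and $a_{p+1},\ldots,a_n$ enumerate ${\rm Vert}^{\ast}(t'')$ likewise.

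Given this, I compute the contractions. In the irreducible case, for $i=n-1$ one has $t_{\{a_1,\ldots,a_{n-1}\}}=t'$ (the single edge into $a$ is contracted) and $t_{\{a_n\}}=a$, contributing $t'\otimes a$. For $i<n-1$, the subset lies inside ${\rm Vert}^{\ast}(t')$ and $t_{\{a_1,\ldots,a_i\}}=t'_{\{a_1,\ldots,a_i\}}$, while the complement contains $a$, which preserves the edge from $a$ to the root of $t$ and yields $t_{\{a_{i+1},\ldots,a_n\}}=t'_{\{a_{i+1},\ldots,a_{n-1}\}}\circ a$. Summing and applying the inductive hypothesis to $t'$ produces $t'_{(1)}\otimes t'_{(2)}\circ a+t'\otimes a$, which is the recursive definition of $\Delta(t'\circ a)$. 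In the reducible case, splitting the sum according to $i<p$, $i=p$, or $i>p$, and using that restriction respects the product --- concretely, $t_{A\cup B}=t'_A\cdot t''_B$ whenever $A\subseteq{\rm Vert}^{\ast}(t')$ and $B\subseteq{\rm Vert}^{\ast}(t'')$ --- gives $\sum_{i=1}^{p-1}t'_{A_i}\otimes t'_{B_i}\cdot t''$, the single term $t'\otimes t''$ at $i=p$, and $\sum_{j=1}^{q-1}t'\cdot t''_{A_j}\otimes t''_{B_j}$ at $i=p+j$. By the inductive hypothesis these assemble into $t'_{(1)}\otimes t'_{(2)}\cdot t''+t'\otimes t''+t'\cdot t''_{(1)}\otimes t''_{(2)}=\Delta(t'\cdot t'')$.

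The main obstacle is the combinatorial bookkeeping: verifying the recursive description of level order under $\circ$ and $\cdot$, and checking that contraction to a subset of non-root vertices respects the decompositions $t'\circ a$ and $t'\cdot t''$. Once these compatibilities are granted, the induction is mechanical and no further ideas are needed beyond the recursive definition of $\Delta$.
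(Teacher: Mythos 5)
Your proof is correct and follows essentially the same route as the paper: induction on the degree, splitting into the irreducible case $t=t'\circ a$ and the reducible case $t=t'\cdot t''$, and relying on the same compatibility facts between the level order, the contractions $t_A$, and the two decompositions (you verify the formula against the recursive definition of $\Delta$, while the paper expands the recursive definition into the formula, but this is only a difference of direction). The combinatorial facts you isolate (that $a$ is last in level order for $t'\circ a$, that the order on $t'\cdot t''$ concatenates those of $t'$ and $t''$, and that contraction respects $\circ a$ and $\cdot$) are exactly the ones the paper asserts and uses.
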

\begin{proof}

Let $t$ be a tree of degree $n$ and $a_1<\cdots <a_n$ its vertices, different of the root, ordered by the level order.

We prove the assertion by induction on $n$. For $n=1$, $t=a \in V$. So, $\Delta(a)=0$ and the assertion is true.

For $n>1$, we consider two cases. First, if $t$ is an irreducible, then $t=t'\circ a_n$, where $t'$ is a tree of degree $(n-1)$, with vertices $a_1,\ldots,a_{n-1}$. In this case, note that
$t'=t_{\{a_1,\ldots,a_{n-1}\}}$ and the vertices $a_1,\ldots,a_{n-1}$ preserve the order that they originally had in $t$. Moreover, if $1\leq k \leq l \leq n-1$, then $t'_{\{a_k,a_{k+1},\ldots,a_l\}}=t_{\{a_k,a_{k+1},\ldots,a_l\}}$.\medskip

By definition of coproduct $\Delta$ and by a recursive argument, we have that
$$\begin{array}{rcl}
\Delta(t) & = & \Delta(t'\circ a_n) \\
          & = & \Delta(t')\circ a_n+t'\circ \Delta(a_n)+t'\otimes a_n\\
          & = & \displaystyle \sum_{i=1}^{n-2}t'_{\{a_1,\ldots,a_i\}}\otimes t'_{\{a_{i+1},\ldots,a_{n-1}\}}\circ a_n+t'\otimes a_n  \\
          & = & \displaystyle \sum_{i=1}^{n-2}t_{\{a_1,\ldots,a_i\}}\otimes t_{\{a_{i+1},\ldots,a_{n-1}\}}\circ a_n+t_{\{a_1,\ldots,a_{n-1}\}}\otimes a_n\\
          & = & \displaystyle \sum_{i=1}^{n-1}t_{\{a_1, \ldots,a_i\}}\otimes t_{\{a_{i+1}, \ldots,a_{n}\}},\\
  \end{array}$$
because $t_{\{a_{i+1},\ldots,a_{n-1}\}}\circ a_n= t_{\{a_{i+1},\ldots,a_{n}\}}$, for $i=1,\ldots,n-2$.\medskip

If $t$ is a reducible tree, then we may write $t=t'\cdot t''$, where $t'$ and $t''$ are trees of degree smaller than $n$. If $t'$ is of degree $l$, then ${\rm Ver}(t')=\{a_1,\ldots,a_l\}$ and ${\rm Ver}(t'')=\{a_{l+1},\ldots,a_{l+m}\}$, where $n=l+m$. Note that the vertices of $t'$ and $t''$ preserve the order that they had in $t$. Moreover, $t'=t_{\{a_1,\ldots,a_l\}}$ and $t''=t_{\{a_{l+1},\ldots,a_{l+m}\}}$.\medskip

By definition of the coproduct $\Delta$ and by a recursive argument, we obtain that:
$$\begin{array}{rcl}
\Delta(t) & = & \Delta(t'\cdot t'') \\
          & = & \Delta(t')\cdot t''+t'\cdot \Delta(t'')+t'\otimes t''\\
          & = & \displaystyle \sum_{i=1}^{l-1}t'_{\{a_1,\ldots,a_i\}}\otimes t'_{\{a_{i+1},\ldots,a_{l}\}}\cdot t''\\
          & &+\displaystyle \sum_{j=1}^{m-1}t'\cdot t''_{\{a_{l+1},\ldots,a_{l+j}\}}\otimes t''_{\{a_{l+j+1},\ldots,a_{l+m}\}}+t'\otimes t'' \\
          & = & \displaystyle \sum_{i=1}^{l-1}t_{\{a_1,\ldots,a_i\}}\otimes t'_{\{a_{i+1},\ldots,a_{l}\}}\cdot t''\\
          & &+\displaystyle \sum_{j=1}^{m-1}t'\cdot t''_{\{a_{l+1},\ldots,a_{l+j}\}}\otimes t_{\{a_{l+j+1},\ldots,a_{l+m}\}}+t'\otimes t''.
  \end{array}$$
As $t'_{\{a_{i+1},\ldots,a_{l}\}}\cdot t''=t_{\{a_{i+1},\ldots,a_n\}}$, for $i=1,\ldots,l-1$, and\\
$t'\cdot t''_{\{a_{l+1},\ldots,a_{l+j}\}}= t_{\{a_1,\ldots,a_{l+j}\}}$, for $j=1,\ldots,m-1$, we get
$$\Delta(t)=\sum_{i=1}^{n-1}t_{\{a_1, \ldots,a_i\}}\otimes t_{\{a_{i+1}, \ldots,a_{n}\}},$$
which ends the proof.
\end{proof}
\begin{example}

When $t$ is the tree

\begin{figure}[h!]
  \centering
  \includegraphics[width=8.5cm]{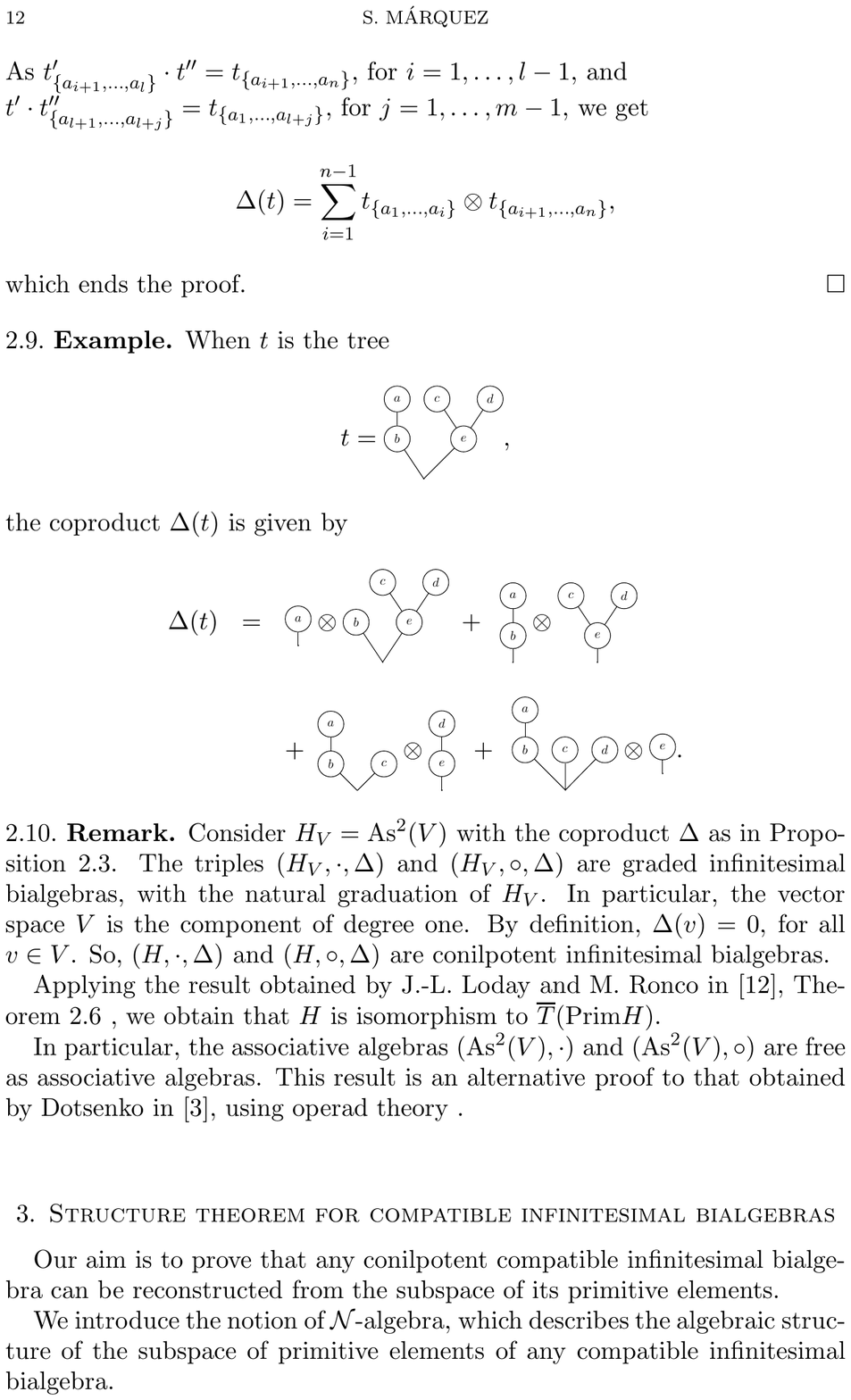}
\end{figure}

the coproduct $\Delta(t)$ is given by

\begin{figure}[h!]
  \centering
  \includegraphics[width=11cm]{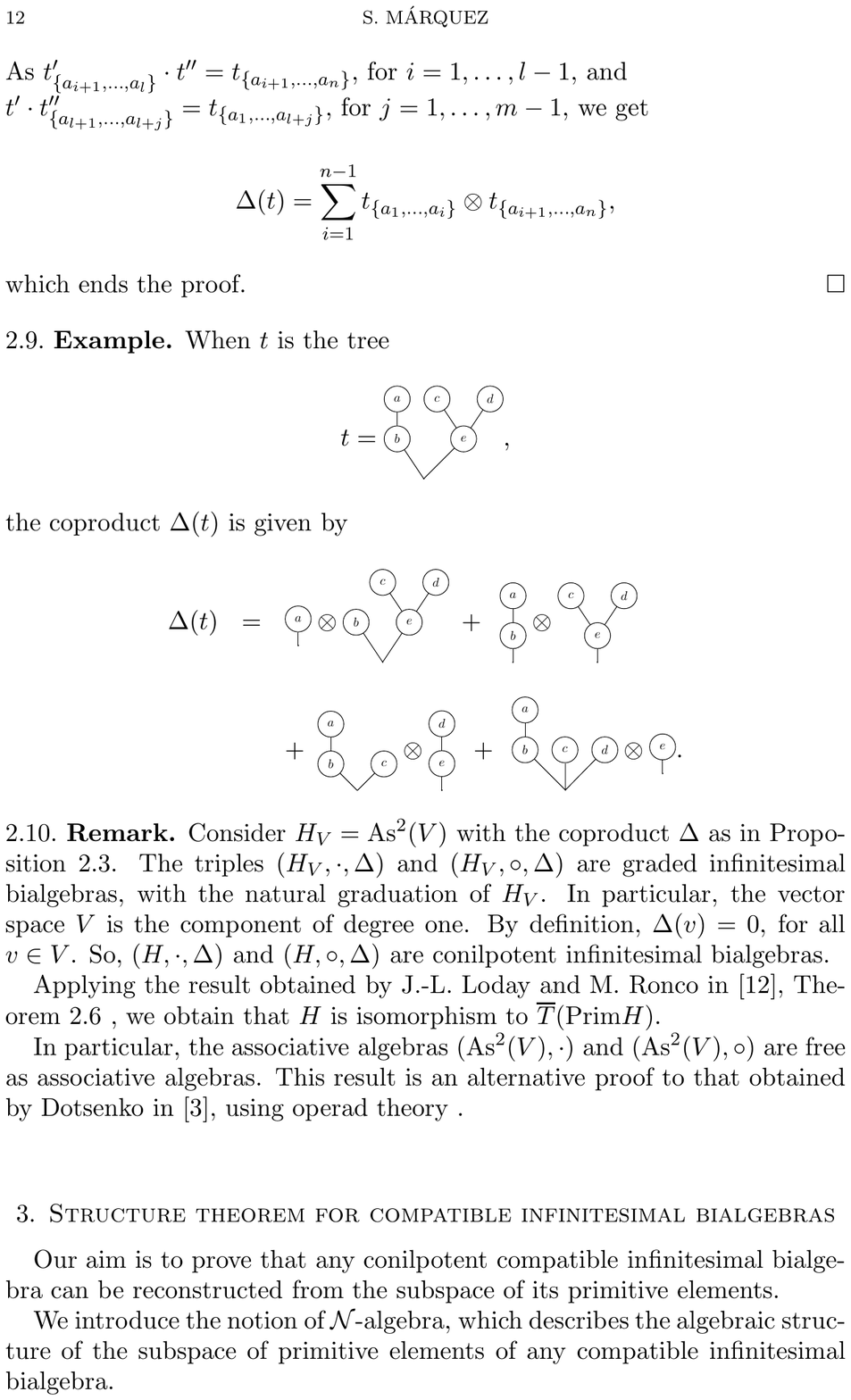}
\end{figure}

\end{example}

\begin{remark}\label{$As^2(v)$EslibreComoAlgAss}
Consider $H_V={\rm As}^2(V)$ with the coproduct $\Delta$ as in Proposition \ref{$As^2(V)$ ComoCompInfBialgebra}. The triples $(H_V,\cdot, \Delta)$ and $(H_V,\circ, \Delta)$ are graded infinitesimal bialgebras, with the natural graduation of $H_V$. In particular, the vector space $V$ is the component of degree one. By definition, $\Delta(v)=0$, for all $v \in V$. So, $(H, \cdot, \Delta)$ and $(H, \circ , \Delta)$ are conilpotent infinitesimal bialgebras.

Applying the result obtained by J.-L. Loday and M. Ronco  in \cite{LodayRonco-On the structure of cofree Hopf algebras}, Theorem 2.6 , we obtain that $H$ is isomorphism to $\overline{T}({\rm Prim } H)$.

In particular, the associative algebras $({\rm As}^2(V), \cdot)$ and $({\rm As}^2(V), \circ)$ are free as associative algebras. This result is an alternative proof to that obtained by  Dotsenko in \cite{Dotsenko-Compatible associative product}, using operad theory .
\end{remark}

\section{Structure theorem for compatible infinitesimal bialgebras}

Our aim is to prove that any conilpotent compatible infinitesimal bialgebra can be reconstructed from the subspace of its primitive elements.

We introduce the notion of $\mathcal{N}$-algebra, which  describes the algebraic structure of the subspace of primitive elements of any compatible infinitesimal bialgebra.

\begin{definition}\label{def:$N$-algebra.}  A {\it $\mathcal{N}$-algebra} is a vector space $V$ equipped with $n$-ary operations $N_{n}:V^{\otimes n}\longrightarrow V$, for $n\geq 2$, which satisfy the following conditions:
\[\begin{array}{l}
(1)\thinspace N_n(x_1,\ldots,N_2(x_n,x_{n+1}))=\displaystyle \sum_{i=1}^{n-1}N_{i+1}(x_1,\ldots,N_{n-i+1}(x_{i},\ldots,x_n),x_{n+1}), \\
(2)\thinspace \text{ for } n \geq 3\\
\quad N_2(x_1,N_n(x_2,\ldots,x_{n+1}))=N_n(N_2(x_1,x_2),x_3,\ldots,x_{n+1})\\
\hspace{1.4cm} -\displaystyle{ \sum_{i=3}^{n} N_i(x_1,N_{n+2-i}(x_2,\ldots ,x_{n+3-i}),x_{n+3-i+1}, \ldots ,x_{n+1})},\\
(3)\thinspace \text{ for } r, n \geq 3\\
\quad N_n(x,y_1\ldots,y_{n-2},N_r(z,t_1,\ldots,t_{r-2},w)) =\\
\hspace{1.4cm} N_r(N_n(x,y_1,\ldots,y_{n-2},z),t_{1},\ldots,t_{r-2},w)\\
\hspace{1.4cm} + \displaystyle{ \sum_{i=1}^{n-2} N_{r+i}(x,y_1,\ldots,y_{i-1},N_{n-i}(y_{i},\ldots ,y_{n-2},z),t_{1},\ldots,t_{r-2},w)}\\
\hspace{1.4cm} - \displaystyle{ \sum_{i=1}^{r-2} N_{n+r-i-1}(x,y_1,\ldots,y_{n-2},N_{i+1}(z,t_1,\ldots ,t_{i}),t_{i+1},\ldots,t_{r-2},w)},\\
\end{array}\]

\end{definition}

For instance, the relations in low degrees give:
\begin{enumerate}
\item $N_2$ is an associative product.
\item $N_3(x,y,N_2(z,t))=N_2(N_3(x,y,z),t)+N_3(x,N_2(y,z),t)$.
\item $N_2(x,N_3(y,z,t))=N_3(N_2(x,y),z,t)-N_3(x,N_2(y,z),t)$.
\item $N_3(x,y,N_3(z,t,w))=N_3(N_3(x,y,z),t,w)+N_4(x,N_2(y,z),t,w)-N_4(x,y,N_2(z,t),w)$.
\end{enumerate}
\begin{remark}
Let $\mathcal{N}$ be the algebraic operad of the $\mathcal{N}$-algebras. It is clear that the operad $\mathcal{N}$ is regular.
So, the $S_n$-module $\mathcal{N}(n)$ is of the form $\mathcal{N}(n)=\mathcal{N}_n \otimes \KK[S_n]$ for some vector space $\mathcal{N}_n$, where $\KK[S_n]$ is the regular representation of $S_n$.
\end{remark}

\begin{proposition}\label{Dimension $N$}
The dimension of the vector space $\mathcal{N}_n$ is equal to the Catalan number $c_{n-1}$.
\end{proposition}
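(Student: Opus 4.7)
The plan is to deduce the dimension formula from the structure theorem for compatible infinitesimal bialgebras announced in the introduction, which asserts an isomorphism ${\rm As}^2(V) \cong {\rm As}(\mathcal{N}(V))$ of graded vector spaces (part (3) of the program). Granting this, write $d_n := \dim \mathcal{N}_n$ and $f^{\mathcal{N}}(x) = \sum_{n \geq 1} d_n x^n$; the isomorphism forces the generating-series identity
\[
f^{{\rm As}^2}(x) \ =\ f^{{\rm As}}(f^{\mathcal{N}}(x))\ =\ \frac{f^{\mathcal{N}}(x)}{1 - f^{\mathcal{N}}(x)}.
\]
By Dotsenko's computation $\dim {\rm As}^2(n) = n!\, c_n$, so $f^{{\rm As}^2}(x) = C(x) - 1$, where $C(x) = \sum_{n \geq 0} c_n x^n$ is the Catalan series. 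Inverting the relation gives $f^{\mathcal{N}}(x) = (C(x)-1)/C(x) = 1 - 1/C(x)$, and the defining identity $C(x) = 1 + xC(x)^2$ (equivalently $1/C(x) = 1 - xC(x)$) simplifies this to $f^{\mathcal{N}}(x) = xC(x)$, yielding $d_n = c_{n-1}$.

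Since this argument rests on the structure theorem, proved later in this section, the genuine obstacle is really that theorem itself. For an independent argument I would proceed combinatorially: exhibit a spanning family of $\mathcal{N}_n$ indexed by plane rooted trees with $n$ leaves in which every internal vertex has at least two children (a classical set of cardinality $c_{n-1}$, in bijection with binary trees on $n-1$ internal nodes), by labelling each internal vertex of arity $k$ with $N_k$ and composing accordingly. Relations (1)--(3) of the preceding definition cover exactly the three cases in which an internal operation $N_k$ appears as the rightmost input of another operation $N_l$, namely $(l,k) = (n,2)$, $(2,n)$, and $(n,r)$ with $n,r \geq 3$; an induction on the depth of an abstract composition therefore rewrites any element as a linear combination of such normal-form trees, giving the upper bound $d_n \leq c_{n-1}$.

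The matching lower bound is the delicate step. Without invoking the structure theorem, I would use the operad morphism $\mathcal{N} \to {\rm As}^2$ (to be constructed in Section 3) and verify that the normal-form trees are sent to linearly independent primitive elements of the cofree coalgebra $({\rm As}^2(V),\Delta)$; combined with the tensor decomposition $\dim {\rm As}^2(V)_n = \sum_{k_1+\cdots+k_r = n}\prod \dim \mathcal{N}_{k_i}$ coming from ${\rm As}^2(V) \cong T({\rm Prim})$ and Dotsenko's count $c_n$, this forces equality $d_n = c_{n-1}$. The hard technical point is thus the coincidence of upper and lower bounds, which is essentially equivalent to the structure theorem; everything else is a bookkeeping exercise with plane trees and the Catalan generating function identity $C = 1 + xC^2$.
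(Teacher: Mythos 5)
Your combinatorial route is essentially the paper's proof, but both of your other ingredients need comment. The paper argues directly that the free $\mathcal{N}$-algebra on one generator is spanned in degree $n$ by the normal forms $N_r(M_1,\ldots,M_{r-1},x)$ whose \emph{last} argument is the generator and whose other arguments are recursively in normal form; this gives $|\mathcal{N}_n|=\sum|\mathcal{N}_{m_1}|\cdots|\mathcal{N}_{m_{r-1}}|$ over compositions $m_1+\cdots+m_{r-1}=n-1$, which is the Catalan recursion, and the paper simply asserts that these normal forms are a basis. Your identification of relations (1)--(3) as exactly the rewriting rules that eliminate an $N_k$ sitting in the rightmost slot of an $N_l$ is correct and is the real content of the spanning step. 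However, your stated index set is wrong: plane rooted trees with $n$ leaves in which every internal vertex has at least two children are counted by the little Schr\"oder numbers ($1,1,3,11,\ldots$), not by $c_{n-1}$; that set indexes a spanning family of the \emph{free operad} on the $N_k$, before relations. The Catalan-counted set you want is the subset of such trees in which the rightmost child of every internal vertex is a leaf (degree $3$: $N_2(N_2(x,x),x)$ and $N_3(x,x,x)$, giving $c_2=2$, not $3$). This is a recoverable slip, but as written the upper bound $d_n\le c_{n-1}$ does not follow from the set you named.

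Your first route (generating functions via ${\rm As}^2(V)\cong T^c(\mathcal{N}(V))$ and Dotsenko's count) is circular in the paper's logical order: the proof of Proposition \ref{$N$algebraLibre}, and hence of the structure theorem, explicitly invokes Proposition \ref{Dimension $N$} to know that the target dimension is $(\dim V)^n c_{n-1}$. You acknowledge this, which is to your credit, but it means that route cannot stand on its own here. Finally, you are right that linear independence of the normal forms is the delicate point; the paper's proof of this proposition elides it entirely (``it is clear that \dots has a basis''), and your plan to obtain the lower bound by mapping normal forms to primitives of ${\rm As}^2(V)$ via the operations of Definition \ref{definition As N-algebra} is exactly how the paper resolves the matter later (through the idempotent $e$ and the count of irreducible trees in Proposition \ref{$N$algebraLibre}). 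So: same strategy as the paper for the count, a more honest accounting of what remains to be proved, but one concrete combinatorial error to fix before the upper bound is established.
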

\begin{proof}
Denote by $|\mathcal{N}_n|$ the dimension of $\mathcal{N}_n$, as a $\KK$-vector space. We know that $|\mathcal{N}_n|$ is the dimension of the subspace of homogeneous elements of degree $n$ of the free ${\mathcal N}$ algebra on one generator $x$. From the Definition \ref{def:$N$-algebra.}, it is clear that
the vector space $\mathcal{N}_n$ has a basis formed by all the elements of type:
$$N_r(M_1(\ldots),\ldots,M_{r-1}(\ldots),x),$$
where each $M_i$ is an element in the basis of ${\mathcal N}_{m_i}$ and $m_1+\cdots+m_{r-1}=n-1$. So, we get that $$|\mathcal{N}_n|=\sum |\mathcal{N}_{m_1}|\cdot \ldots \cdot |\mathcal{N}_{m_{r-1}}|$$ where the sum is taken over all the families $\{m_i\}_{1\leq i \leq r-1}$ such that $m_1+\cdots+m_{r-1}=n-1$.

In particular, $|\mathcal{N}_1|=1$ and, for $n>1$, we have that:
\[\begin{array}{rll}
|\mathcal{N}_n|&=\sum c_{m_1}\cdot \ldots \cdot c_{m_{r-1}}& \\
&=c_{n-1},\\
\end{array}\]
by a recursive hypothesis, which implies that $|\mathcal{N}_1|=c_0$ and that the integers $|\mathcal{N}_n|$ are defined by same equation than the Catalan numbers. We may conclude that $|\mathcal{N}_n|=c_{n-1}$, for $n\geq 1$.
\end{proof}

\begin{definition}\label{definition As N-algebra}
Given a compatible associative algebra $(A ,\pun ,\cir)$ ,
the $n$-ary operations $N_{n}:A^{\otimes n}\longrightarrow A$ on  $A$ are defined as follows:
$$N_n(x_1,\ldots , x_n)=(x_1\pun \ldots \pun x_{n-1})\cir x_n -x_1\pun ((x_2\pun \ldots \pun x_{n-1})\cir x_n)$$
\end{definition}

\begin{remark}\label{Observacion$N$en$As^2$}
The operations $N_n$ satisfy the following relations:
\begin{enumerate}
\item $N_2(x,y)=x \cir y - x \pun y$.
\item $N_n(x_1,\ldots , x_n)=N_3(x_1,x_2\pun \ldots \pun x_{n-1},x_n)$, for any $n \geq 4$.
\end{enumerate}
\end{remark}

\begin{proposition}\label{Prop$N$en$As^2$}
Let $(A ,\pun ,\cir)$ be a compatible associative algebra. For any family of elements $x_1, x_2, x_3 , x_4 , x_5 \in A$
we have that:
\[\begin{array}{l}
(1)\enspace N_2(N_2(x_1, x_2),x_3)=N_2(x_1,N_2(x_2,x_3)), \\
(2)\enspace N_3(x_1,x_2,N_2(x_3,x_4))=N_2(N_3(x_1,x_2,x_3),x_4)+N_3(x_1,N_2(x_2,x_3),x_4),\\
(3)\enspace N_3(x_1,x_2,N_3(x_3,x_4,x_5))=N_3(N_3(x_1,x_2,x_3),x_4,x_5)\\
\hspace{3cm} +N_4(x_1,N_2(x_2,x_3),x_4,x_5)-N_4(x_1,x_2,N_2(x_3,x_4),x_5)
\end{array}\]
\end{proposition}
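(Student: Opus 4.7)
The plan is to verify each identity by direct computation, using three tools: (a) the definition of the operations $N_n$ as polynomials in $\cdot$ and $\circ$; (b) associativity of $\cdot$ and $\circ$; and (c) the compatibility relation of Remark \ref{compatibilidad},
$$x \circ (y \cdot z) + x \cdot (y \circ z) = (x \circ y) \cdot z + (x \cdot y) \circ z.$$
Each identity reduces to a bookkeeping exercise: after expanding both sides and simplifying using (b), the remaining discrepancy splits into several groups of four monomials, each being an instance of (c), possibly pre-multiplied on the left or with one of its inputs replaced by a compound expression.

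For identity (1), I would expand $N_2(a,b) = a \circ b - a \cdot b$ on each side. Each side produces four terms; after associativity, the pure monomials $x_1 \circ x_2 \circ x_3$ and $x_1 \cdot x_2 \cdot x_3$ appear on both sides and cancel, and the remaining four-term difference is precisely compatibility applied to $(x_1,x_2,x_3)$. This is a one-step computation.

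For identity (2), the same procedure yields four monomials on the LHS and eight on the RHS, with one pair cancelling internally on the RHS. Cancelling the monomials that match across the two sides after associativity leaves a six-term discrepancy. A first application of (c) to the triple $(x_1 \cdot x_2, x_3, x_4)$ replaces three of these six terms by the single monomial $x_1 \cdot x_2 \cdot (x_3 \circ x_4)$. A second application of (c), this time to the triple $(x_2, x_3, x_4)$ and pre-multiplied on the left by $x_1$, cancels that leftover monomial together with the remaining three terms, closing the identity.

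For identity (3) the computation is considerably longer, so I would first use Remark \ref{Observacion$N$en$As^2$}(2) to rewrite both $N_4$-terms as $N_3$-expressions,
$$N_4(x_1, N_2(x_2,x_3), x_4, x_5) = N_3\bigl(x_1,\, N_2(x_2,x_3)\cdot x_4,\, x_5\bigr),$$
$$N_4(x_1, x_2, N_2(x_3,x_4), x_5) = N_3\bigl(x_1,\, x_2 \cdot N_2(x_3,x_4),\, x_5\bigr).$$
After this reduction every operation is $N_3$ or $N_2$; expansion yields four monomials on the LHS and twelve on the RHS with three internal cancellations. The resulting ten-term discrepancy partitions into four compatibility groups: two applications of (c) to the triples $(x_1\cdot x_2, x_3, x_4)$ and $(x_2,x_3,x_4)$ (the latter pre-multiplied by $x_1$ on the left), each followed by $\circ\, x_5$; and two applications of (c) to the triples $(x_1\cdot x_2, x_3, x_4\circ x_5)$ and $(x_2, x_3, x_4\circ x_5)$ (again with $x_1$ on the left). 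The residual single monomials produced by the first pair of compatibility applications cancel exactly against the residuals produced by the second pair, so the total is zero. The main obstacle is purely clerical — correctly identifying which four-term combinations match each instance of compatibility and tracking signs across roughly twenty monomials — not algebraic; no input beyond associativity and (c) is needed.
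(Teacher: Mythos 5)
Your proposal is correct and follows essentially the same route as the paper: expand the $N_n$'s into their defining expressions in $\cdot$ and $\circ$, cancel the terms that match by associativity, and absorb the residue into instances of the compatibility relation applied to the triples $(x_1\cdot x_2,x_3,x_4)$ and $(x_2,x_3,x_4)$ (the latter pre-multiplied by $x_1$), exactly as the paper does for identity $(2)$. The only cosmetic differences are that the paper dispatches $(1)$ by noting that $N_2=\circ-\cdot$ is itself an associative product (a linear combination of compatible products), and leaves $(3)$ as ``analogous,'' whereas you spell out the reduction of the $N_4$-terms via Remark \ref{Observacion$N$en$As^2$} and the four compatibility groups needed there.
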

\begin{proof}
The first relation states that $N_2$ is an associative product, which is true because $N_2$ is linear combination of the products
$\pun$ and $\cir$.\medskip

Let us prove the second statement. The proof of the other ones is obtained in an analogous way.

Let us denote the element $N_2(x,y)$ as $x\ast y=x\circ y-x\cdot y$. We have that,
\[\begin{array}{rcl} N_3(x_1,x_2,N_2(x_3,x_4) &=&(x_1\cdot x_2)\circ (x_3\ast x_4)-x_1\cdot(x_2\circ(x_3\ast x_4))\\
&=&(x_1\cdot x_2)\circ x_3\circ x_4-(x_1\cdot x_2)\circ (x_3 \cdot x_4)\\
& &-x_1\cdot (x_2\circ x_3\circ x_4)+x_1\cdot (x_2 \circ (x_3 \cdot x_4)).
\end{array}\]

Applying the compatibility condition between $\cdot$ and $\circ$, we get:

\begin{enumerate}
\item $(x_1 \cdot x_3)\circ (x_3 \cdot x_4)=((x_1 \cdot x_2)\circ x_3)\cdot x_4 - x_1 \cdot x_2 \cdot (x_3 \circ x_4) +(x_1 \cdot x_2 \cdot x_3)\circ x_4,$
\item $x_1\cdot (x_2\circ (x_3 \cdot x_4))=x_1 \cdot (x_2 \circ x_3 )\cdot x_4 -x_1\cdot x_2 \cdot (x_3 \circ x_4)+x_1 \cdot ((x_2 \cdot x_3 )\circ x_4).$
\end{enumerate}

So, we obtain that
\[\begin{array}{rcl} N_3(x_1,x_2,N_2(x_3,x_4) &=&(x_1\cdot x_2)\circ x_3\circ x_4-((x_1 \cdot x_2)\circ x_3)\cdot x_4\\
& &-(x_1 \cdot x_2 \cdot x_3)\circ x_4-x_1\cdot (x_2\circ x_3\circ x_4)\\
& &+x_1 \cdot (x_2 \circ x_3 )\cdot x_4 +x_1 \cdot ((x_2 \cdot x_3 )\circ x_4).
\end{array}\]

Regrouping the terms, we get:
\[\begin{array}{rcl} N_3(x_1,x_2,N_2(x_3,x_4) &=&N_3(x_1,x_2,x_3)\circ x_4-N(x_1,x_2,x_2)\cdot x_4\\
& &+(x_1\cdot (x_2 \ast x_3))\circ x_4-x_1 \cdot ((x_2 \ast x_3)\circ x_4\\
&=&N_2(N_3(x_1,x_2,x_3),x_4)+N_3(x_1,N_2(x_2,x_3),x_4),
\end{array}\]
which proves the equality.
\end{proof}

\begin{lemma}\label{lema$N$en$As^2$}
Let $(A, \cdot, \circ)$ be an compatible associative algebra and let $\{N_n\}_{n\geq 2}$ be the family of products introduced in Definition \ref{definition As N-algebra}. For  elements $x, y , z \in A$, we have that:
\begin{enumerate}
\item $N_2(x\pun y,z)=N_3(x,y,z)+x \pun N_2(y,z)$.
\item $N_2(x,y \pun z)=N_3(x,y,z)+N_2(x,y) \pun z$.
\end{enumerate}
\end{lemma}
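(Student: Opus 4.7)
The plan is to prove both identities by direct expansion, unwinding the definitions of $N_2$ and $N_3$ and then comparing the two sides using only the associativity of $\cdot$ and the compatibility condition from Remark \ref{compatibilidad}. Throughout, I will use that $N_2(a,b) = a\circ b - a\cdot b$ and $N_3(a,b,c) = (a\cdot b)\circ c - a\cdot(b\circ c)$.

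For identity (1), I would expand the right-hand side:
\[
N_3(x,y,z) + x\cdot N_2(y,z) = (x\cdot y)\circ z - x\cdot(y\circ z) + x\cdot(y\circ z) - x\cdot y\cdot z.
\]
The middle two terms cancel, leaving $(x\cdot y)\circ z - (x\cdot y)\cdot z$, which is exactly $N_2(x\cdot y, z)$. So (1) requires nothing beyond associativity of $\cdot$.

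For identity (2), I would again expand the right-hand side,
\[
N_3(x,y,z) + N_2(x,y)\cdot z = (x\cdot y)\circ z - x\cdot(y\circ z) + (x\circ y)\cdot z - (x\cdot y)\cdot z,
\]
and compare with the left-hand side $N_2(x, y\cdot z) = x\circ(y\cdot z) - x\cdot(y\cdot z)$. Using associativity to identify $x\cdot(y\cdot z) = (x\cdot y)\cdot z$, the equality reduces to
\[
(x\cdot y)\circ z + (x\circ y)\cdot z = x\cdot(y\circ z) + x\circ(y\cdot z),
\]
which is precisely the compatibility relation stated in Remark \ref{compatibilidad}. This step is the main (and really only) substantive point: identity (2), unlike (1), genuinely uses compatibility between $\cdot$ and $\circ$, whereas (1) follows purely from associativity. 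I do not anticipate any hidden obstacle; both are short bookkeeping arguments once the definitions are unpacked.
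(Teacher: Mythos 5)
Your computation is correct and follows essentially the same route as the paper, which states only that the formulas ``are obtained by a straightforward computation, using the definition of the operations $N_n$''; your expansion simply makes that computation explicit. The observation that identity (1) needs only associativity of $\cdot$ while identity (2) reduces precisely to the compatibility relation of Remark \ref{compatibilidad} is accurate.
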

\begin{proof}

The formulas are obtained by a straightforward computation, using the definition of the operations $N_n$s.
\end{proof}

The following result is immediate to prove.
\begin{proposition}\label{InduccionLemmaNen$As^2$}
Let $A$ be a compatible associative algebra $A$. For any family of elements $x_1, \ldots, x_n \in A$, we have that:
\begin{enumerate}
\item $N_2(x_1\pun \ldots \pun x_{n-1},x_n)=N_n(x_1,\ldots,x_n)
+x_1\pun N_{n-1}(x_2,\ldots,x_n)+ \ldots + x_1\pun \ldots \pun x_{n-2}\pun N_2(x_{n-1},x_n)$,
\item $N_2(x_1,x_2\pun \ldots \pun x_n)=N_n(x_1,\ldots,x_n)
+N_{n-1}(x_1,\ldots,x_{n-1})\pun x_n+ \ldots + N_2(x_1,x_2)\pun x_3\pun \ldots \pun x_n $,
\end{enumerate}
\end{proposition}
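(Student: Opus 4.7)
The plan is a straightforward simultaneous induction on $n$, proving both identities in parallel. The case $n=2$ is trivial (both sides equal $N_2(x_1,x_2)$), and $n=3$ is precisely Lemma \ref{lema$N$en$As^2$}, supplying the genuine base case.

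For the inductive step of (1), I would split $x_1 \pun \ldots \pun x_{n-1}$ as $x_1 \pun (x_2 \pun \ldots \pun x_{n-1})$ and apply Lemma \ref{lema$N$en$As^2$}(1) to obtain
\[N_2\bigl(x_1 \pun (x_2 \pun \ldots \pun x_{n-1}),\, x_n\bigr) = N_3(x_1,\, x_2 \pun \ldots \pun x_{n-1},\, x_n) + x_1 \pun N_2(x_2 \pun \ldots \pun x_{n-1},\, x_n).\]
By Remark \ref{Observacion$N$en$As^2$}(2) the first term on the right collapses to $N_n(x_1,\ldots,x_n)$. Applying the inductive hypothesis for (1) to the $(n-1)$-tuple $(x_2,\ldots,x_n)$ expands the second term as a telescoping sum; left-multiplication by $x_1$ then yields exactly the trailing summands $x_1 \pun N_{n-1}(x_2,\ldots,x_n) + \cdots + x_1 \pun \ldots \pun x_{n-2} \pun N_2(x_{n-1},x_n)$ of the desired formula.

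Part (2) is treated symmetrically: write $x_2 \pun \ldots \pun x_n$ as $(x_2 \pun \ldots \pun x_{n-1}) \pun x_n$, invoke Lemma \ref{lema$N$en$As^2$}(2), rewrite the resulting $N_3$ via Remark \ref{Observacion$N$en$As^2$}(2) to produce $N_n(x_1,\ldots,x_n)$, and use the inductive hypothesis for (2) on $(x_1,\ldots,x_{n-1})$ to expand the remaining $N_2$ factor before right-multiplication by $x_n$.

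There is no real obstacle — the author himself calls the result immediate — and the argument amounts to unwinding the telescoping sum produced by the base Lemma. The only point requiring care is reindexing so that the summands generated by the inductive hypothesis line up with those in the stated identity; the essential algebraic input is the collapsing formula $N_3(x_1, x_2 \pun \ldots \pun x_{n-1}, x_n) = N_n(x_1, \ldots, x_n)$ from Remark \ref{Observacion$N$en$As^2$}(2), which is what allows the recursion on $N_3$ to produce the higher operations $N_n$.
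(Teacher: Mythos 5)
Your proof is correct and is essentially the argument the paper intends: the paper omits the proof as ``immediate,'' and the evident route is exactly your induction, splitting off one factor, applying the preceding lemma, collapsing the resulting $N_3$ term to $N_n$ via the remark, and invoking the inductive hypothesis on the remaining $N_2$ term. No gaps.
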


\begin{theorem}
 Let $(A ,\pun ,\cir)$ be a compatible associative algebra with $n$-ary operations
 $N_{n}$ introduced in Definition \ref{definition As N-algebra}. The data $(A,\{ N_n\})$ is a $\mathcal{N}$-algebra.
\end{theorem}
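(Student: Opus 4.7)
The plan is to reduce the three families of identities in Definition~\ref{def:$N$-algebra.} to the low-degree cases gathered in Proposition~\ref{Prop$N$en$As^2$}, exploiting the collapse identity $N_n(x_1,\ldots,x_n)=N_3(x_1,\,x_2\pun\cdots\pun x_{n-1},\,x_n)$ of Remark~\ref{Observacion$N$en$As^2$} together with the product-absorption formulas of Lemma~\ref{lema$N$en$As^2$} and Proposition~\ref{InduccionLemmaNen$As^2$}. In spirit, every $N_n$ with $n\geq 3$ is just $N_3$ applied to a product, so any candidate identity among the $N_k$'s translates into one between expressions built from $N_2$, $N_3$, $\pun$ and $\cir$, where Proposition~\ref{Prop$N$en$As^2$} already does the heavy lifting.

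For relation~(1), the cases $n=2,3$ are exactly parts~(1) and~(2) of Proposition~\ref{Prop$N$en$As^2$}; for $n\geq 4$ I would rewrite the left-hand side as $N_3(x_1,\,x_2\pun\cdots\pun x_{n-1},\,N_2(x_n,x_{n+1}))$, apply Proposition~\ref{Prop$N$en$As^2$}(2), collapse the first summand back to $N_2(N_n(x_1,\ldots,x_n),x_{n+1})$ via Remark~\ref{Observacion$N$en$As^2$}, and expand the inner $N_2(x_2\pun\cdots\pun x_{n-1},x_n)$ of the second summand by Proposition~\ref{InduccionLemmaNen$As^2$}(1); each piece that arises collapses via Remark~\ref{Observacion$N$en$As^2$} into the corresponding term on the right-hand side. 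Relation~(2) is handled dually: the base case $n=3$, namely $N_2(x_1,N_3(x_2,x_3,x_4))=N_3(N_2(x_1,x_2),x_3,x_4)-N_3(x_1,N_2(x_2,x_3),x_4)$, I would prove by the same compatibility computation that yields Proposition~\ref{Prop$N$en$As^2$}(2); for $n\geq 4$ I would then reduce $N_n$ on the inner slot to $N_3$ of a product and run the symmetric scheme, using Proposition~\ref{InduccionLemmaNen$As^2$}(2) in place of~(1).

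Relation~(3) is where I expect the main obstacle, on account of the combinatorial bookkeeping. I would collapse both outer operations at once, so the left-hand side becomes $N_3(x,\,y_1\pun\cdots\pun y_{n-2},\,N_3(z,\,t_1\pun\cdots\pun t_{r-2},\,w))$, and Proposition~\ref{Prop$N$en$As^2$}(3) yields three pieces. The first collapses into $N_r(N_n(x,y_1,\ldots,y_{n-2},z),t_1,\ldots,t_{r-2},w)$ by two applications of Remark~\ref{Observacion$N$en$As^2$}; the second, $+N_4(x,N_2(y_1\pun\cdots\pun y_{n-2},z),t_1\pun\cdots\pun t_{r-2},w)$, I would expand by Proposition~\ref{InduccionLemmaNen$As^2$}(1) and collapse termwise to produce the $+\sum_{i=1}^{n-2} N_{r+i}(\ldots)$ part of the right-hand side; the third, $-N_4(x,y_1\pun\cdots\pun y_{n-2},N_2(z,t_1\pun\cdots\pun t_{r-2}),w)$, I would expand by Proposition~\ref{InduccionLemmaNen$As^2$}(2) and, after a simple reindexing, obtain the $-\sum_{i=1}^{r-2} N_{n+r-i-1}(\ldots)$ part. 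The hardest step will be checking that the arities and slot orderings of the collapsed terms match the indexing prescribed in Definition~\ref{def:$N$-algebra.} on the nose, but this is pure combinatorial bookkeeping once the collapse-expand scheme is set up.
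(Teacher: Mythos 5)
Your proposal is correct and follows essentially the same route as the paper: collapse $N_n$ to $N_3$ of a $\pun$-product via Remark \ref{Observacion$N$en$As^2$}, apply the low-degree identities of Proposition \ref{Prop$N$en$As^2$}, and re-expand with Proposition \ref{InduccionLemmaNen$As^2$}. The paper only writes out relation (1) explicitly and dismisses (2) and (3) as ``similar''; your sketch of the collapse--expand bookkeeping for relation (3) is in fact more detailed than what appears in print, and it checks out.
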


\begin{proof}
We apply Remark \ref{Observacion$N$en$As^2$} together with Proposition \ref{Prop$N$en$As^2$} and Proposition \ref{InduccionLemmaNen$As^2$}.

Let us prove the relation $(1)$ of Definition \ref{def:$N$-algebra.}. The proofs of the remaining relations follow by similar arguments.

Let $A$ be a compatible associative algebra and consider $x_1, \ldots,x_n,x_{n+1} \in A$, with $n\geq 3$.
The equality was proved in Proposition \ref{Prop$N$en$As^2$} for $n=3$. Let $n>3$, by Remark \ref{Observacion$N$en$As^2$}, we have that :
$$N_n(x_1,x_2,\ldots,x_{n-1},N_2(x_n,x_{n+1}))=N_3(x_1,x_2\cdot\ldots \cdot  x_{n-1},N_2(x_n,x_{n+1})).$$
So, from Proposition \ref{Prop$N$en$As^2$}, we obtain that

$$\begin{array}{ll} N_3(x_1,x_2\cdot \ldots \cdot  x_{n-1},N_2(x_n,x_{n+1})) &=N_2(N_3(x_1,x_2\cdot \ldots \cdot x_{n-1},x_n),x_{n+1})\\
& +N_3(x_1,N_2(x_2\cdot \ldots \cdot x_{n-1},x_n),x_{n+1}).
\end{array}$$

Applying Proposition \ref{Prop$N$en$As^2$} and Remark \ref{Observacion$N$en$As^2$} to the second term, we get:
$$\begin{array}{l} N_3(x_1,N_2(x_2\cdot \ldots \cdot x_{n-1},x_n),x_{n+1})\\
\hspace{3cm}=\displaystyle\sum_{i=2}^{n-1}N_3(x_1,x_2\cdot\ldots \cdot  x_{i-1}\cdot N_{n-i+1}(x_i, \ldots, x_n),x_{n+1})\\
\hspace{3cm}=\displaystyle\sum_{i=2}^{n-1}N_{i+1}(x_1,x_2,\ldots, x_{i-1}, N_{n-i+1}(x_i, \ldots, x_n),x_{n+1}).
\end{array}$$

To end the proof it suffices to apply Remark \ref{Observacion$N$en$As^2$} to first term, which implies that
$$ N_n(x_1,\ldots,N_2(x_n,x_{n+1}))=\displaystyle{ \sum_{i=1}^{n-1}N_{i+1}(x_1,\ldots,N_{n-i+1}(x_{i},\ldots,x_n),x_{n+1})}.$$

\end{proof}

\bigskip

\begin{theorem}\label{ElementosPrimComInfBialg}
Let $H$ be a compatible infinitesimal bialgebra with coproduct $\Delta$. If the elements $x_1, \ldots , x_n$ in $H$ are primitive, then $N_n(x_1,\ldots , x_n)$ is primitive, too. Therefore, we have that ${\mbox {\it Prim}(H)}$ is a $\mathcal{N}$-subalgebra of $(H, \{N_n\})$.
\end{theorem}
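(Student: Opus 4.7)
The plan is to prove directly that $\Delta(N_n(x_1,\ldots,x_n))=0$ whenever $x_1,\ldots,x_n\in H$ are primitive; the second assertion (that $\mathrm{Prim}(H)$ is a $\mathcal{N}$-subalgebra) then follows immediately, since the $\mathcal{N}$-algebra identities hold on all of $H$ by the preceding theorem. The strategy is a direct bookkeeping computation built on the unital infinitesimal relations for $\pun$ and for $\cir$ separately.

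First I would dispose of the case $n=2$: when $\Delta(x)=\Delta(y)=0$, both unital infinitesimal relations collapse to $\Delta(x\pun y)=x\otimes y=\Delta(x\cir y)$, hence $\Delta(N_2(x,y))=\Delta(x\cir y-x\pun y)=0$ immediately. For $n\geq 3$, I exploit the explicit formula of Definition \ref{definition As N-algebra},
$$N_n(x_1,\ldots,x_n)=(x_1\pun\cdots\pun x_{n-1})\cir x_n-x_1\pun\bigl((x_2\pun\cdots\pun x_{n-1})\cir x_n\bigr).$$
A short induction on $k$, based only on the $\pun$-infinitesimal relation together with the hypothesis $\Delta(x_i)=0$, yields the deconcatenation formula
$$\Delta(x_1\pun\cdots\pun x_k)=\sum_{i=1}^{k-1}(x_1\pun\cdots\pun x_i)\otimes(x_{i+1}\pun\cdots\pun x_k).$$

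I would then expand $\Delta$ of each summand of $N_n$. Applying the $\cir$-infinitesimal relation (with $\Delta(x_n)=0$) to the first gives
\begin{align*}
\Delta\bigl((x_1\pun\cdots\pun x_{n-1})\cir x_n\bigr) &= \sum_{i=1}^{n-2}(x_1\pun\cdots\pun x_i)\otimes (x_{i+1}\pun\cdots\pun x_{n-1})\cir x_n \\
&\quad + (x_1\pun\cdots\pun x_{n-1})\otimes x_n,
\end{align*}
and applying the $\pun$-infinitesimal relation (with $\Delta(x_1)=0$), combined with the analogous $\cir$-expansion of $(x_2\pun\cdots\pun x_{n-1})\cir x_n$, to the second gives
\begin{align*}
\Delta\bigl(x_1\pun((x_2\pun\cdots\pun x_{n-1})\cir x_n)\bigr) &= \sum_{i=2}^{n-2}(x_1\pun\cdots\pun x_i)\otimes (x_{i+1}\pun\cdots\pun x_{n-1})\cir x_n \\
&\quad + (x_1\pun\cdots\pun x_{n-1})\otimes x_n \\
&\quad + x_1\otimes\bigl((x_2\pun\cdots\pun x_{n-1})\cir x_n\bigr).
\end{align*}
Subtracting, the two large sums differ only by the missing $i=1$ term $x_1\otimes(x_2\pun\cdots\pun x_{n-1})\cir x_n$, which is precisely cancelled by the residual $x_1\otimes Q$ contribution from the $\pun$-infinitesimal relation; the $(x_1\pun\cdots\pun x_{n-1})\otimes x_n$ pieces cancel in pairs. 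Hence $\Delta(N_n(x_1,\ldots,x_n))=0$.

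The only real obstacle is keeping the indexing clean across the three nested invocations of the infinitesimal relations. It is worth stressing that the compatibility between $\pun$ and $\cir$ (Remark \ref{compatibilidad}) is not used at all: each product is handled purely through its own unital infinitesimal relation, which makes this result considerably more transparent than the $\mathcal{N}$-algebra identities verified in the preceding theorem.
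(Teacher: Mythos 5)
Your proposal is correct and follows essentially the same route as the paper: both establish the deconcatenation formula for $\Delta(x_1\pun\cdots\pun x_k)$ on primitives, expand the two summands of $N_n(x_1,\ldots,x_n)=(x_1\pun\cdots\pun x_{n-1})\cir x_n-x_1\pun((x_2\pun\cdots\pun x_{n-1})\cir x_n)$ via the respective unital infinitesimal relations, and observe that the residual $x_1\otimes((x_2\pun\cdots\pun x_{n-1})\cir x_n)$ term cancels the missing $i=1$ summand. Your closing observation that the $\pun$--$\cir$ compatibility is never invoked here is accurate and a nice point, but the argument itself is the paper's.
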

\begin{proof}
The cases $n=2$ and $n=3$ are obvious.

Suppose that  $n\geq 4$ and  that the elements $x_1, \ldots , x_n$ are primitive elements in $H$. Recall that
\[\begin{array}{l}(\ast)\enspace  N_n(x_1,\ldots , x_n)=  N_3(x_1,x_2\pun \ldots \pun x_{n-1},x_n)\\
\hspace{3cm}= (x_1\pun x_2\pun \ldots \pun x_{n-1})\cir x_n -x_1\pun ((x_2 \pun \ldots \pun x_{n-1})\cir x_n).\\
\end{array}\]
Applying a recursive argument on  $n\geq 2$, it is immediate to verify that
$$\Delta(x_1 \pun \ldots \pun x_n)= \displaystyle{ \sum_{i=1}^{n-1}(x_1\pun \ldots \pun x_i)\otimes (x_{i+1}\pun \ldots \pun x_n}).$$

Applying the formula above to $(\ast)$ we obtain that :
\[\begin{array}{rcl} \Delta((x_1\pun \ldots \pun x_{n-1})\cir  x_n) &=& \displaystyle{ \sum_{i=1}^{n-2}(x_1\pun \ldots \pun x_i)\otimes ((x_{i+1}\pun \ldots \pun x_{n-1}})\cir x_n) \\
&+& (x_1 \pun \ldots \pun x_{n-1})\otimes x_n,
\end{array}\]
and
\[\begin{array}{rcl} \Delta((x_2\pun \ldots \pun x_{r-1}) \cir x_r) &=&\displaystyle{ \sum_{i=1}^{n-2}(x_2\pun \ldots \pun x_i)\otimes ((x_{i+1}\pun \ldots \pun x_{n-1}})\cir x_n) \\
&+& (x_2 \pun \ldots \pun x_{n-1})\otimes x_n,
\end{array}\]
for $n\geq 3$.

Therefore, we may conclude that
\[\begin{array}{l} \Delta(x_1\pun ((x_2 \pun\ldots \pun x_{r-1}) \cir x_r)) =\displaystyle{ \sum_{i=2}^{n-2}(x_1\pun \ldots \pun x_i)\otimes ((x_{i+1}\pun \ldots \pun x_{n-1}})\cir x_n) \\
\hspace{3cm}+ (x_1 \pun \ldots \pun x_{n-1})\otimes x_n + x_1\otimes ((x_2\pun \ldots \pun x_{n-1})\cir x_n),
\end{array}\]
and thus $\Delta(N_n(x_1,\ldots, x_n))=0$, which ends the proof.
\end{proof}

\begin{remark}\label{descomposicionEn$As^2(V)$}
Let $V$ be a vector space. Denote by $H_V$ the free compatible associative algebra ${\rm As}^2(V)$ with the compatible infinitesimal bialgebra structure given in Proposition \ref{$As^2(V)$ ComoCompInfBialgebra}. By remark \ref{$As^2(v)$EslibreComoAlgAss}, $(H_V,\Delta, \cdot)$ is isomorphic as bialgebra to $\overline{T}({\rm Prim}(H_V)$. By identifying the product $\cdot$ with the concatenation product in $\overline{T}({\rm Prim}(H_V)$, we have tha any element $x \in H_V$ is written in a unique way as a linear combination of elements of the type $x_1\cdot \ldots \cdot x_r$, where $x_i \in {\rm Prim}(H_V)$, for each $1 \leq i  \leq r$.
\end{remark}

\begin{lemma}\label{PrimisgeneratedBy$V$}
Let $V$ be a vector space and $H_V$ the compatible infinitesimal algebra as in Remark \ref{descomposicionEn$As^2(V)$}. As $\mathcal{N}$-algebra, ${\rm Prim}(H_{V})$ is generated by $V$
\end{lemma}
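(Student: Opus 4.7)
The plan is to introduce $M := \langle V \rangle_{\mathcal{N}} \subseteq \mathrm{Prim}(H_V)$, where the containment holds by Theorem \ref{ElementosPrimComInfBialg}, and to prove the \emph{stronger} statement that the associative $\cdot$-subalgebra of $H_V$ generated by $M$ is already all of $H_V$. This is strictly more than the lemma asks for, but it is the right statement to attack by induction, and the lemma will then drop out from a short Loday--Ronco uniqueness argument.

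First, to pass from ``$H_V$ is generated by $M$ under $\cdot$'' back to $M = \mathrm{Prim}(H_V)$: by Remark \ref{$As^2(v)$EslibreComoAlgAss}, the Loday--Ronco theorem identifies $(H_V,\cdot,\Delta)$ with $\overline{T}(\mathrm{Prim}(H_V))$, so every element of $H_V$ has a \emph{unique} expression as a sum of $\cdot$-products of primitive elements. Since $M \subseteq \mathrm{Prim}(H_V)$, any expression of a primitive $x$ as a $\cdot$-sum of elements of $M$ must, by this uniqueness, reduce to the single summand $x \in M$; so generation of $H_V$ under $\cdot$ by $M$ will force $M = \mathrm{Prim}(H_V)$.

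For the generation statement, I would induct on the degree $n$. The base case $n=1$ is trivial since $H_V^{(1)}=V\subseteq M$. For the inductive step, Remark \ref{arbolesIrreEn$As^2$} allows us to reduce to irreducible trees, since a reducible tree is a $\cdot$-product of lower-degree irreducibles handled by the inductive hypothesis. An irreducible tree of degree $n$ has the form $s \circ a$ with $a\in V$ and $s$ a tree of degree $n-1$, and by induction $s$ is a linear combination of $\cdot$-monomials $m_1\cdot \ldots\cdot m_r$ with $m_i\in M$. The key computational input is Proposition \ref{InduccionLemmaNen$As^2$}(1), which after rearrangement reads
\begin{equation*}
(m_1\cdot \ldots \cdot m_r)\circ a = N_{r+1}(m_1,\ldots,m_r,a) + \sum_{i=1}^{r-1} m_1\cdot\ldots\cdot m_i \cdot N_{r-i+1}(m_{i+1},\ldots,m_r,a) + m_1\cdot \ldots \cdot m_r\cdot a.
\end{equation*}
Because $a\in V\subseteq M$ and $M$ is closed under every $\mathcal{N}$-operation, each $N_k$-factor on the right belongs to $M$, and hence the whole right-hand side is a $\cdot$-sum of elements of $M$. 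This closes the induction and completes the proof.

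The main obstacle is identifying and exploiting the straightening formula of Proposition \ref{InduccionLemmaNen$As^2$}(1): it is precisely the identity needed to convert a $\circ$ applied to a $\cdot$-string into $\cdot$-products of $\mathcal{N}$-operations on the factors, mirroring the recursive definition of $\circ$ on trees given in Section \ref{ProductoCirculo}. Without that formula the irreducible-tree case has no handle; given it, the rest of the argument is formal.
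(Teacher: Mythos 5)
Your proposal is correct and follows essentially the same route as the paper: induct on the degree of a tree, reduce to irreducible trees $s\circ a$, and use the straightening identity to rewrite $(m_1\cdot\ldots\cdot m_r)\circ a$ as a $\cdot$-sum of $\mathcal{N}$-operations on elements of $M$, concluding via the Loday--Ronco decomposition of Remark \ref{descomposicionEn$As^2(V)$}. The only cosmetic difference is that you invoke the fully unrolled formula of Proposition \ref{InduccionLemmaNen$As^2$}(1) in one step, whereas the paper applies the one-term identity $(x_1\cdot\ldots\cdot x_r)\circ a=N_{r+1}(x_1,\ldots,x_r,a)+x_1\cdot((x_2\cdot\ldots\cdot x_r)\circ a)$ recursively.
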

\begin{proof}
Let $N_V$ be the sub-$\mathcal{N}$-algebra of $H_V$ generated by $V$. Since any element $x \in V$ is a primitive element, $N_V \subseteq {\rm Prim}(H_{V})$.

Let us see that any element $x \in H_V$ can written as a linear combination of elements of the type $x_1\cdot \ldots \cdot x_r$, where $x_i \in N_V$, for each $1\leq i \leq r$. It is sufficient to verify  the assertion for any tree $t \in H_V$. We prove the statement by induction on degree of $t$.\medskip

If $|t|=1$, then $t=a$, for some element $a$ in the basis $X$ of $H_V$. Suppose $|t|=n$, with $n>1$.

If $t$ is a reducible tree, then $t=t_1\cdot \ldots \cdot t_r$, where $r>1$ and $t_i$ is a tree of degree smaller than $n$, for each $1\leq i \leq r$. By a recursive argument, the assertion is true for each $t_i$, which implies the result for $t$.

Now, if $t$ is an irreducible tree, then $t=t'\circ a$, where $t'$ is a tree of degree $(n-1)$ and $a$ is an element in the basis  $X$. Applying a recursive argument, $t'$ is a linear combination of elements of the type $x_1\cdot \ldots \cdot x_r$, where $x_1, \ldots, x_r \in N_V$, with $1\leq r \leq n-1$. So, $t=t'\circ a$ is a linear combination of elements of the type $(x_1\cdot \ldots \cdot x_r)\circ a$, where $x_1, \ldots, x_r \in N_V$, with $1\leq r \leq n-1$ and $a$ an element of the basis $X$.

Now, consider the operation
$$N_{r+1}(x_1, \ldots ,x_r,a)=(x_1\cdot \ldots \cdot x_r)\circ a-x_1\cdot ((x_2 \cdot \ldots \cdot x_r)\circ a),$$
applied on the elements $x_1, \ldots ,x_r,a$. We have that
$$(x_1\cdot \ldots \cdot x_r)\circ a=N_{r+1}(x_1, \ldots ,x_r,a)+x_1\cdot ((x_2 \cdot \ldots \cdot x_r)\circ a).$$

Applying a recursive argument to $x_1$ and $(x_2 \cdot \ldots \cdot x_r)\circ a$, in the right side of the previous equality, we get the result.

By Remark \ref{descomposicionEn$As^2(V)$}, this implies that ${\rm Prim}(H_V)$ is generated, as $\mathcal{N}$-algebra by $N_V$, which implies that it is generated by $V$. This ends the proof.

\end{proof}

\begin{proposition}\label{$N$algebraLibre}
Let $V$ be vector space and let $H_{V}$ be the free associative compatible algebra ${\rm As}^2(V)$, spanned by $V$. The $\mathcal{N}$-algebra ${\rm Prim}(H_{V})$, of primitive elements of $H_{V}$, is the free $\mathcal{N}$-algebra on $V$.
\end{proposition}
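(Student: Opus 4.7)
The plan is to exhibit a canonical comparison map from the free $\mathcal{N}$-algebra $F_V$ on $V$ into $\mathrm{Prim}(H_V)$, show it is surjective using the previous lemma, and then verify injectivity by a dimension count driven by the coalgebra structure already established for $H_V$.

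First, let $F_V$ denote the free $\mathcal{N}$-algebra on $V$, and let $\iota:V\hookrightarrow \mathrm{Prim}(H_V)$ be the inclusion, which makes sense because every $v\in V$ satisfies $\Delta(v)=0$ by the definition of $\Delta$ in Proposition~\ref{$As^2(V)$ ComoCompInfBialgebra}. By Theorem~\ref{ElementosPrimComInfBialg}, $\mathrm{Prim}(H_V)$ is a $\mathcal{N}$-algebra under the operations $\{N_n\}$. The universal property of $F_V$ extends $\iota$ uniquely to a $\mathcal{N}$-algebra morphism $\phi:F_V\longrightarrow \mathrm{Prim}(H_V)$. Lemma~\ref{PrimisgeneratedBy$V$} says that $\mathrm{Prim}(H_V)$ is generated, as a $\mathcal{N}$-algebra, by $V$, so $\phi$ is surjective.

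The main step is injectivity, which I would do by comparing graded dimensions. Both $F_V$ and $\mathrm{Prim}(H_V)$ inherit gradings (from arity in $\mathcal{N}$ and from the tree grading of $H_V$, respectively), and $\phi$ is graded because $\iota$ maps $V$ into the degree-one part. Proposition~\ref{Dimension $N$} combined with the regularity of $\mathcal{N}$ gives $\dim F_{V,n}=c_{n-1}\,(\dim V)^n$. On the other hand, Remark~\ref{$As^2(v)$EslibreComoAlgAss} identifies $(H_V,\cdot,\Delta)$ with the cofree conilpotent coalgebra $\overline{T}(\mathrm{Prim}(H_V))$, hence as graded vector spaces
$$\dim H_{V,n}=\sum_{r\geq 1}\sum_{n_1+\cdots+n_r=n}\dim\mathrm{Prim}(H_V)_{n_1}\cdots \dim\mathrm{Prim}(H_V)_{n_r}.$$
Since $\dim H_{V,n}=c_n\,(\dim V)^n$ (there are $c_n$ planar rooted trees with $n+1$ vertices, each non-root vertex carrying one of $\dim V$ colors), the Catalan generating function identity $tC(t)^2=C(t)-1$ translates this recursion into the claim that $\dim\mathrm{Prim}(H_V)_n=c_{n-1}\,(\dim V)^n$. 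Thus $\dim\mathrm{Prim}(H_V)_n=\dim F_{V,n}$ in every degree, and the surjection $\phi$ is forced to be an isomorphism.

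The main obstacle I expect is the bookkeeping in the dimension count: one must justify that both $F_V$ and $\mathrm{Prim}(H_V)$ are connected graded with the degree-one piece equal to $V$, and that the $\mathcal{N}$-algebra morphism $\phi$ strictly preserves this grading. Once this is in place, the argument reduces cleanly to the Catalan generating function identity recalled above, which is exactly what forces the Catalan numbers $c_{n-1}$ predicted by Proposition~\ref{Dimension $N$} to reappear on the primitive side.
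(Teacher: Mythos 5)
Your argument is correct and follows the same overall skeleton as the paper's proof: both build the canonical surjection from the free $\mathcal{N}$-algebra onto $\mathrm{Prim}(H_V)$ via Lemma~\ref{PrimisgeneratedBy$V$} and Proposition~\ref{Dimension $N$}, and then reduce everything to showing $\dim \mathrm{Prim}(H_V)_n = c_{n-1}(\dim V)^n$. Where you genuinely diverge is in how that dimension is computed. The paper works with the Loday--Ronco idempotent $e(x)=x-x_{(1)}\cdot x_{(2)}+\cdots$ and produces an \emph{explicit basis} $B_n=\{e(t)\}$ of $\mathrm{Prim}(H_V)_n$ indexed by the irreducible trees of degree $n$: $e$ kills reducible trees, so $e(\mathrm{Irr})=\mathrm{Prim}(H_V)$, and the identity $e(t)=t-t_{(1)}\cdot e(t_{(2)})$ forces linear independence because each $e(t)$ equals $t$ plus a combination of reducible trees. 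You instead extract the dimension from the coalgebra isomorphism $H_V\cong\overline{T}(\mathrm{Prim}(H_V))$ of Remark~\ref{$As^2(v)$EslibreComoAlgAss} by inverting the generating-function relation $H=P/(1-P)$, and the identity $C(s)=1+sC(s)^2$ gives $P=sC(s)$, i.e.\ $p_n=c_{n-1}(\dim V)^n$; this is shorter but yields no basis of the primitives, whereas the paper's $B_n$ is reused implicitly in the structure theory that follows. Both routes rest on the same Loday--Ronco rigidity theorem, and both share the same minor caveats: one should say a word about why the isomorphism $H_V\cong\overline{T}(\mathrm{Prim}(H_V))$ respects the tree grading, and the dimension count as literally written presumes $\dim V<\infty$ (otherwise one filters $V$ by finite-dimensional subspaces). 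Neither point is a real gap.
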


\begin{proof}
Note that, by Lemma \ref{PrimisgeneratedBy$V$}, as $\mathcal{N}$-algebra, ${\rm Prim}(H_{V})$ is graded and generated by $V$. Denote by ${\rm Prim}(H_{V})_{n}$ the subspace of homogeneous elements of degree $n$ of ${\rm Prim }(H_V)$.\medskip

By Proposition \ref{Dimension $N$}, to see that ${\rm Prim}(H_{V})$ is the free $N$-algebra on $V$ it suffices to show that the dimension of ${\rm Prim}(H_{V})_{n}$ is equal to $(\mbox{dim}V)^{n}c_{n-1}$.\medskip

Let us compute the dimension of ${\rm Prim}(H_V)_{n}$. Recall from \cite{LodayRonco-On the structure of cofree Hopf algebras} the linear operator $e$. Since $(H_V, \cdot, \Delta)$ is a conilpotent infinitesimal bialgebra, we can define $e: H \rightarrow H$ given by
$$e(x)=x-x_{(1)}\cdot x_{(2)}-x_{(2)}\cdot x_{(2)}\cdot x_{(3)}+\cdots ,$$
where $\Delta(x)=x_{(1)}\otimes \cdots \otimes x_{(n)}$, for all $x \in H_V$. Consider the set $$B_n=\{e(t) | t \text{ is an irreducible tree of degree } n \text{ in } H_{V}\}.$$

Let us prove that the set $B_n$ is a basis of ${\rm Prim }(H_V)_{n}$.

From \cite{LodayRonco-On the structure of cofree Hopf algebras}, Proposition 2.5, we have that $B_n \subseteq e(H_{V})={\rm Prim}(H_{V})$ and for any reducible tree $t=t_1 \cdot \ldots \cdot t_r$,  $$e(t)=e(t_1\cdot \ldots \cdot t_r)=0.$$

So, $e({\mbox {Irr}}) =e(H_{V})={\rm Prim}(H_{V})$, because all element $x \in H_V$ can be written as a linear combination of elements in  $\bigcup _{n\geq 1}{\mbox {Irr}_n}$.

On the other hand, the same result asserts that, if $t$ is a irreducible tree of degree $n$, then
$$e(t)=t-t_{(1)}\cdot e(t_{(2)}).$$

So, if $t_1$ and $t_2$ are different irreducible trees in $H_{V}$, then $e(t_1)\neq e(t_2)$. In particular, since the number of irreducible trees of degree $n$ is equal to $(\mbox{dim}V)^{n}c_{n-1}$, we have that $|B_n|=(\mbox{dim}V)^{n}c_{n-1}$.\medskip

Let us see that the set $B_n$ is linearly independent. Note that in particular $|B_n|=c_{n-1}$. To simplify the notation, denote $l=|B_n|$ and let  $\{t^1, \ldots ,t^l\}$ be the set irreducible trees of degree $n$ in $H_{V}$. We have that  $B_n=\{e(t^1), \ldots ,e(t^l)\}$.\medskip

Let $\{\alpha_1,\ldots ,\alpha_{l} \}$ be  a family of elements in the field $\KK$ and suppose that $$\alpha_1e(t^1) +\ldots+\alpha_le(t^l)=0.$$
Since  $e(t^{i})=t^{i}- t^{i}_{(1)}\cdot e(t^{i}_{(2)})$, for any $1\leq i \leq l$, we have that
$$\alpha_1t^{1} +\ldots+\alpha_lt^{l}=\sum_{i=1}^{l}\alpha_i ( t^{i}_{(1)}\cdot e(t^{i}_{(2)})).$$

But this is  possible only if $\alpha_i=0$, for all $1 \leq i \leq l $, because the right side is linear combination of reducible trees. So, $B_n$ is linearly independent and we may conclude that it is a basis of ${\rm Prim}(H_{V})_{n}$. The  dimension of ${\rm Prim}(H_{V})_{n}$ is equal to $(\mbox{dim}V)^{n}c_{n-1}$, which ends the proof.

\end{proof}

Proposition \ref{$N$algebraLibre} and Proposition \ref{$As^2(v)$EslibreComoAlgAss} imply the following structure theorem.

\begin{theorem}\label{TeoremaDstructuraComInfBial}
Let $V$ be vector space and let $\mathcal{N}(V)$ be the free $\mathcal{N}$-algebra generated by $V$. The free associative compatible algebra ${\rm As}^2(V)$ is isomorphic to $T^{c}(\mathcal{N}(V))$.
\end{theorem}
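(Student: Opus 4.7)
The plan is to chain together the two structural results already established in the paper. By Remark \ref{$As^2(v)$EslibreComoAlgAss}, the triple $(H_V,\pun,\Delta)$ is a conilpotent infinitesimal bialgebra (it is graded with $V$ sitting in degree one, and $\Delta$ vanishes on $V$). Applying the Loday--Ronco structure theorem for conilpotent infinitesimal bialgebras (\cite{LodayRonco-On the structure of cofree Hopf algebras}, Theorem 2.6) yields a canonical coalgebra isomorphism
\[
\varphi\colon H_V \stackrel{\sim}{\longrightarrow} T^{c}\bigl({\rm Prim}(H_V)\bigr),
\]
given explicitly by the idempotent $e$ used in the proof of Proposition \ref{$N$algebraLibre}. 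So the first step is simply to invoke this isomorphism.

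The second step is to identify the coefficient space. By Proposition \ref{$N$algebraLibre}, ${\rm Prim}(H_V)$ is the free $\mathcal{N}$-algebra on $V$; in other words, there is a canonical $\mathcal{N}$-algebra isomorphism $\psi\colon \mathcal{N}(V)\stackrel{\sim}{\longrightarrow} {\rm Prim}(H_V)$, namely the one induced by the inclusion $V\hookrightarrow H_V$ and the universal property of $\mathcal{N}(V)$ (using that ${\rm Prim}(H_V)$ inherits an $\mathcal{N}$-algebra structure by Theorem \ref{ElementosPrimComInfBialg}). Applying the cofree conilpotent coalgebra functor $T^{c}$ to $\psi$ produces a coalgebra isomorphism $T^{c}(\psi)\colon T^{c}(\mathcal{N}(V))\to T^{c}({\rm Prim}(H_V))$.

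The proof is then the composition
\[
{\rm As}^2(V)=H_V \stackrel{\varphi}{\longrightarrow} T^{c}({\rm Prim}(H_V)) \stackrel{T^{c}(\psi)^{-1}}{\longrightarrow} T^{c}(\mathcal{N}(V)),
\]
which is an isomorphism of coalgebras.

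The only non-routine point is making sure that the hypotheses of the Loday--Ronco theorem are genuinely satisfied, i.e.\ that $(H_V,\pun,\Delta)$ is conilpotent. This was already noted in Remark \ref{$As^2(v)$EslibreComoAlgAss}: the grading where $V$ sits in degree one is respected by $\Delta$ (Proposition \ref{FormulaCoproducto} shows that $\Delta$ strictly decreases degree on each tensor factor) and $V$ is primitive by construction, so conilpotence follows from a standard induction on degree. Once this is in hand the theorem is a formal consequence of the two earlier propositions; the real content is buried in Proposition \ref{$N$algebraLibre}, whose dimension count $\dim {\rm Prim}(H_V)_n=(\dim V)^n c_{n-1}$ is what forces ${\rm Prim}(H_V)$ to be free of the expected size.
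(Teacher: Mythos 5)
Your proposal is correct and follows essentially the same route as the paper: the paper's proof consists precisely of combining the Loday--Ronco coalgebra isomorphism $H_V\cong \overline{T}({\rm Prim}(H_V))$ from Remark \ref{$As^2(v)$EslibreComoAlgAss} with the identification of ${\rm Prim}(H_V)$ as the free $\mathcal{N}$-algebra on $V$ from Proposition \ref{$N$algebraLibre}. Your additional remarks on conilpotence and on transporting the isomorphism through the cofree functor only make explicit what the paper leaves implicit.
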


\begin{remark}
Let $(A,\cdot,\circ,\Delta)$ be a compatible infinitesimal bialgebra. Consider $x\ast y=\alpha(x\cdot y)+\beta(x \circ y)$ a linear combination of the products $\cdot$ and $\circ$, where $\alpha$ and $\beta$ are elements in the field $\KK$.

A direct compute shows that
$$\Delta(x\ast y)=x_{(1)}\otimes x_{(2)} \ast y + x \ast y_{(1)}\otimes y_{(2)} + (\alpha+\beta )x \otimes y.$$

In particular, when $x\ast y=x\cdot y-x\circ y$, $(A, \cdot, \ast, \Delta)$ is a compatible associative algebra with coalgebra structure satisfying:
\begin{enumerate}
  \item $\Delta(x \cdot y)=x_{(1)}\otimes x_{(2)}\cdot y+x\cdot y_{(1)}\otimes y_{(2)}+x\otimes y,$
  \item $\Delta(x \ast y)=x_{(1)}\otimes x_{(2)}\ast y+x\ast y_{(1)}\otimes y_{(2)}$.
\end{enumerate}

So, $\Delta$ is infinitesimal unitary with respect to the product $\cdot$, in the Loday-Ronco's sense, and infinitesimal with respect to the product $\ast$, in the Joni-Rota's sense.

This notion of bialgebras is equivalent to the notion of bialgebras that we have given in Definition \ref{InfComp}.
\end{remark}

\bigskip

\section{Matching Dialgebras}\label{MatchingDialgebras}
In this section, we consider a particular case of compatible associative algebras, the {\it matching dialgebras}. In \cite{Zhang-The category and operad of matching dialgebras}, Y. Zhang, Ch. Bai and L. Guo studied the operad of matching dialgebras. They constructed the free matching dialgebras on a vector space $V$ by defining a matching dialgebra structure on the double tensor space $\overline{T}(\overline{T}(V))$.

In the same work, the authors proved that the operad of matching dialgebras is Koszul and compute the complex which gives the homology groups.\medskip

The aim of the present section is to study the notion of bialgebras in matching dialgebras.

Motivated by the {\it path Hopf algebra} $P(S)$ described by
A.B. Goncharov in \cite{Goncharov-Galois symmetries of fundamental groupoids and noncommutative geometry},
we introduced {\it bi-matching dialgebras}. We show that the Goncharov's Hopf algebras is part of a family of bi-matching dialgebras, which can be constructed from a bialgebra $(H,\cdot, \Delta)$(in the usual sense) and a {\it right semi-homomorphism} $R:H\rightarrow H$, which is a coderivation with respect to the coproduct $\Delta$.

We also develop the notion of compatible infinitesimal bialgebra in a matching dialgebras. In particular, a free matching dialgebra is a compatible infinitesimal bialgebra, which we obtain another example of a Loday\rq s good triple of operads (see \cite{Loday-GeneralizedBialgebras}).
\medskip
\begin{definition}
A {\it matching dialgebra} is a vector space $A$ with two associative products $\cdot$ and $\circ $ such that
$$(x \cdot y)\circ z=x\cdot(y \circ z) \text{, } (x\circ y) \cdot z = x\circ (y \cdot z)$$
for all $x , y, z \in A$.
\end{definition}
\medskip
We recall from \cite{Zhang-The category and operad of matching dialgebras} the notion of {\it right semi-homomorphism} of algebra.

This type of linear map gives an interesting family of examples of matching dialgebras.\medskip
\begin{definition}\label{semi-homomorfismo}
Let $(A,\cdot)$ be an associative algebra. A $\KK$-linear map $R:A\rightarrow A$ is a {\it right semi-homomorphism} if it satisfies the condition
$$R(x \cdot y)=R(x)\cdot y, \text{ for all } x, y \in A.$$
\end{definition}
\medskip
\begin{remark}\label{observacionSemi-homo}
Note that if $(A, \cdot)$ is an associative algebra and
$R:A\rightarrow A$ is a right semi-homomorphism, then $(A, \cdot, \circ)$ is a matching dialgebra
with the product $\circ: A \otimes A \rightarrow A $ given by $x\circ y:=x \cdot R(y)$ (see \cite{Zhang-The category and operad of matching dialgebras}).
\end{remark}
\medskip
\begin{example}\label{unidadEnUnSemi-Homm}
If $(A, \cdot)$ is an associative algebra and $a$ is an element in $A$, then the map $R:A \rightarrow A$ defined as $R(x):=a \cdot x$, for $x\in A$, is a right semi-homomorphism.

In particular, when $(A, \cdot)$ is a unital associative algebra with unit $e \in A$ and $R:A\rightarrow A$ is a right semi-homomorphism, then the linear map $R$ is completely determined by the action of $R$ on the unit $e$. Indeed, if $x \in A$, then $R(x)=R(e \cdot x)=R(e) \cdot x.$ So, for the case of unital associative algebra $A$, any right semi-homomorphism $R: A \rightarrow A$ is given by $R(x):=a \cdot x$, where $a$ is some element in $A$.
\end{example}

\bigskip

\subsection{The free matching dialgebra}
The free matching dialgebra over a vector space $V$ is a quotient of the free compatible associative algebra ${\rm As}^2(V)$. In particular, we may define an explicit compatible infinitesimal bialgebra structure on the free objects of the category of the matching dialgebras.

Given a vector space $V$, with basis $X$, let $T_{n}^{X}$ be  the set of planar rooted trees with $(n+1)$ vertices, whose non-root vertices are colored by the elements of $X$.

In the Subsection \ref{freeCompAssAlg}, we define a compatible associative algebra structure on the vector space spanned $\displaystyle \bigcup_{n\geq 1}T_{n}^{X}$ of colored planar rooted trees, where $X$ is a basis of $V$, and proved that ${\rm As}^2(V)$ is the free compatible associative algebra over $V$.

The free matching dialgebra over $V$  may be obtained as the quotient ${\rm As}^2(V)$, by the ideal spanned by the elements $(x \cdot y)\circ z=x\cdot(y \circ z)$ and $(x\circ y) \cdot z = x\circ (y \cdot z)$ , for $x,y$ and $z$ in $V$.

We want to find a set of trees which gives a set of representatives of the classes of ${\rm As}^2(V)$ modulo these relations.
\medskip

\begin{examples}\label{arbolesGradoBajo$As_2$}
In low degree,  we identify the trees:

\begin{figure}[h!]
  \centering
  \includegraphics[width=4.5cm]{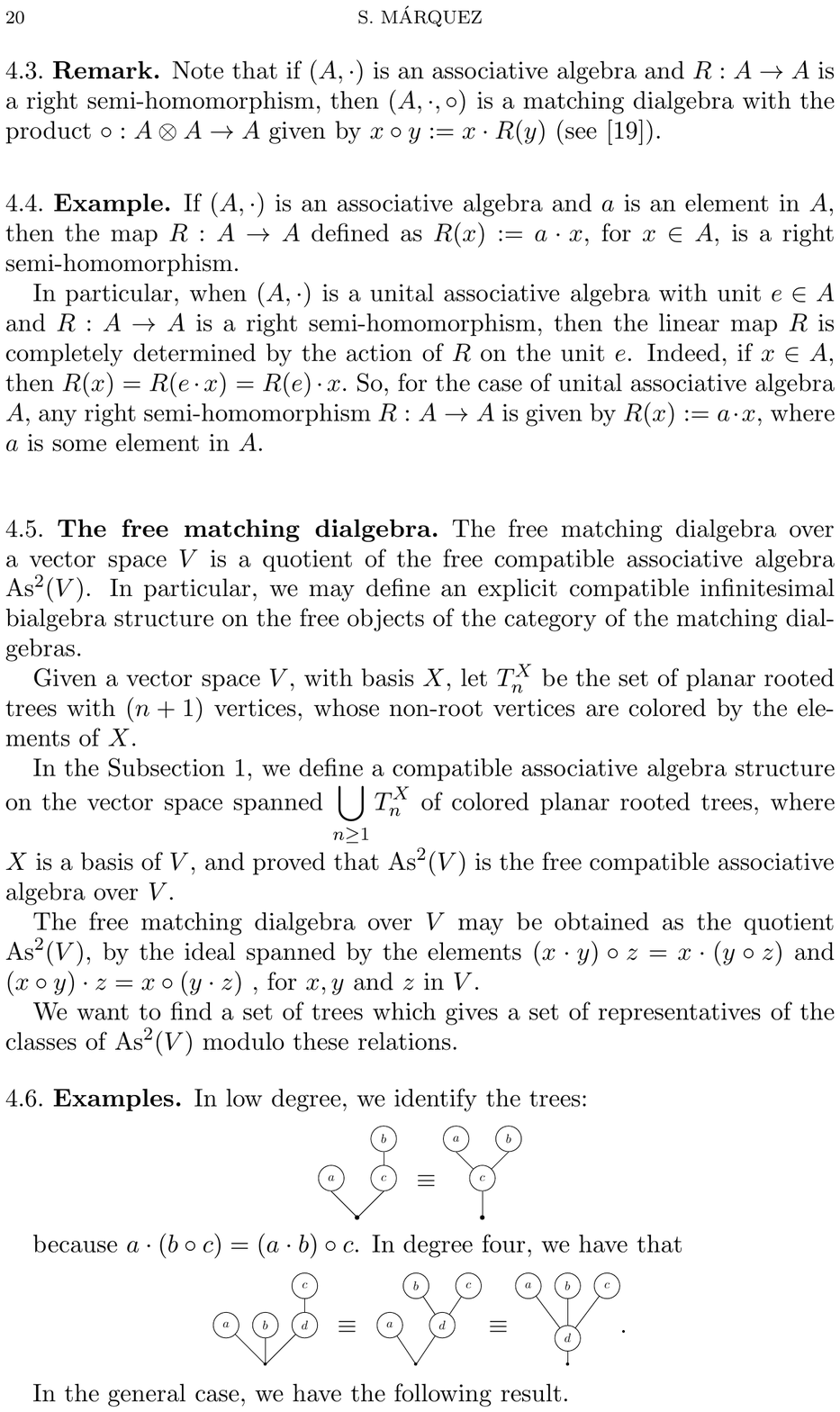}
\end{figure}

because $a\cdot(b \circ c)=(a \cdot b) \circ c$. In degree  four, we have that

\begin{figure}[h!]
  \centering
  \includegraphics[width=7cm]{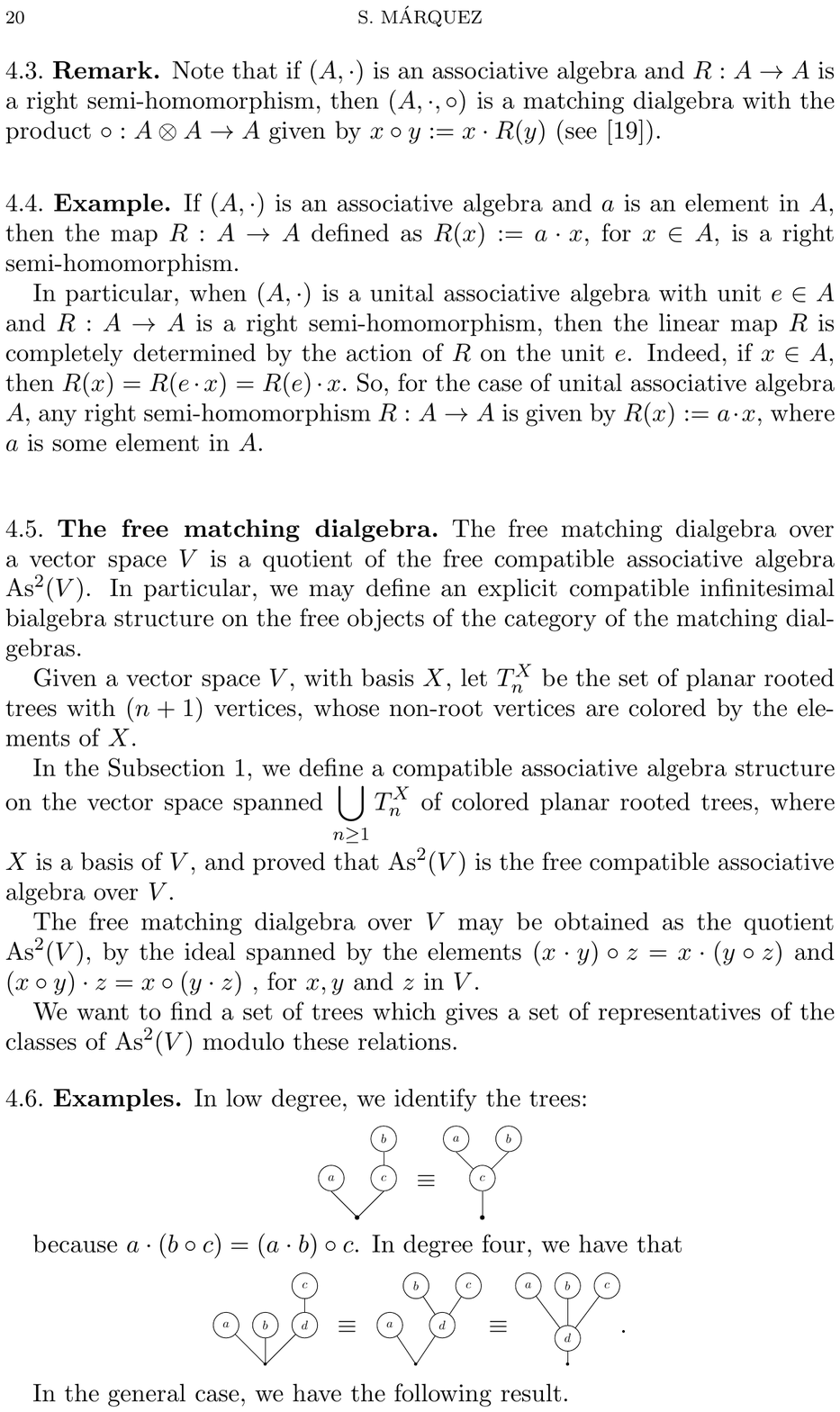}
\end{figure}

\end{examples}

In the general case, we have the following result.
\begin{proposition}\label{BaseDialgebra}
Any tree $t \in T_{n}^{X}${, for $n\geq 1$,} is equivalent to a tree of the type
$$t=t^1\cdot \ldots \cdot t^r,$$
where each $t^k$ is a tree of the form $t^k=a^k_1 \circ \ldots \circ a^k_{n_k}$, with $1\leq k \leq r$ and $n_1+\ldots +n_r=n$.
\end{proposition}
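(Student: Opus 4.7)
The argument will proceed by induction on the degree $n$ of $t$, exploiting the unique factorization of any tree in ${\rm As}^2(V)$ as a $\cdot$-product of irreducible trees, and of any irreducible tree as $t'\circ a$ with $a\in X$. The case $n=1$ is immediate, since $t=a\in X$ is already of the required form with $r=1$. For $n>1$, write $t=t^1\cdot \ldots \cdot t^r$. If $r>1$, each $t^i$ has degree $<n$, so the inductive hypothesis applies to every $t^i$, and concatenating the resulting representatives via $\cdot$ produces a representative of $t$ of the desired form.

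The substantive case is $r=1$, where $t=t'\circ a$ is irreducible for some $a\in X$ and some $t'$ of degree $n-1$. By induction, $t'$ is equivalent (modulo the matching relations) to some $s^1\cdot \ldots \cdot s^m$ with each $s^j=a^j_1\circ \ldots \circ a^j_{n_j}$ a pure $\circ$-chain. If $m=1$, associativity of $\circ$ turns $t=s^1\circ a$ directly into a $\circ$-chain of length $n_1+1$. If $m>1$, I peel off the leftmost factor at each step via the matching identity $(x\cdot y)\circ z = x\cdot(y\circ z)$: grouping $s^1\cdot \ldots \cdot s^m = s^1\cdot(s^2\cdot \ldots \cdot s^m)$ and iterating, a sub-induction on $m$ yields
\[
t \;\equiv\; s^1\cdot s^2\cdot \ldots \cdot s^{m-1}\cdot (s^m\circ a),
\]
after which associativity of $\circ$ converts the rightmost factor $s^m\circ a$ into a $\circ$-chain of length $n_m+1$, completing the form.

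No step in this plan is a serious obstacle: the only technicality is the sub-induction on $m$ in the irreducible case, and it uses only the first matching relation. Notice that the second relation $(x\circ y)\cdot z = x\circ(y\cdot z)$ plays no role in establishing the \emph{existence} of a representative of this shape, although both relations would be required (through a separate counting argument) to prove uniqueness of such a representative and to recover the expected dimension for the free matching dialgebra.
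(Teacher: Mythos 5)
Your proposal is correct and follows essentially the same route as the paper: induction on degree, splitting into the reducible case (apply the hypothesis factorwise) and the irreducible case $t=t'\circ a$, where the identity $(x\cdot y)\circ z=x\cdot(y\circ z)$ pushes the $\circ\,a$ onto the rightmost factor. You are merely more explicit about the sub-induction that the paper compresses into the single line $(t'_1\cdot\ldots\cdot t'_r)\circ a=t'_1\cdot\ldots\cdot(t'_r\circ a)$, and your remark that only the first matching relation is needed for existence is a correct observation the paper does not make.
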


\begin{proof}
For  $n=3$, the result  was proved in \ref{arbolesGradoBajo$As_2$}. For $n>3$, suppose that the assertion is true for any tree of degree strictly less than $n$. If $t$ is an irreducible tree, then  $t=t'\circ a$, with  $|t'|=n-1$ and $ a $  an element of degree one. Applying a recursive argument to $t'$, we get
$$t=(t'_1\cdot \ldots \cdot t'_r)\circ a =t'_1\cdot \ldots \cdot (t'_r\circ a).$$

If $t$ is a reducible tree, then $t=t'\cdot t''$, where $t'$ and $t''$ are trees of degree strictly less than $n$. So, applying the inductive hypothesis to $t'$ and $t''$, we obtain the assertion for $t$, which ends the proof.
\end{proof}

\begin{notation}
We denote by $D_{n}^X$ the set of all trees  of degree $n$, described in Proposition \ref{BaseDialgebra}  and  by $D^X$ the set $\bigcup_{n\geq 1}D{}^{X}_{n}$.
\end{notation}
For instance, in degree three, we have that:\vspace{-.4cm}
\begin{figure}[h!]
  \centering
  \includegraphics[width=9.3cm]{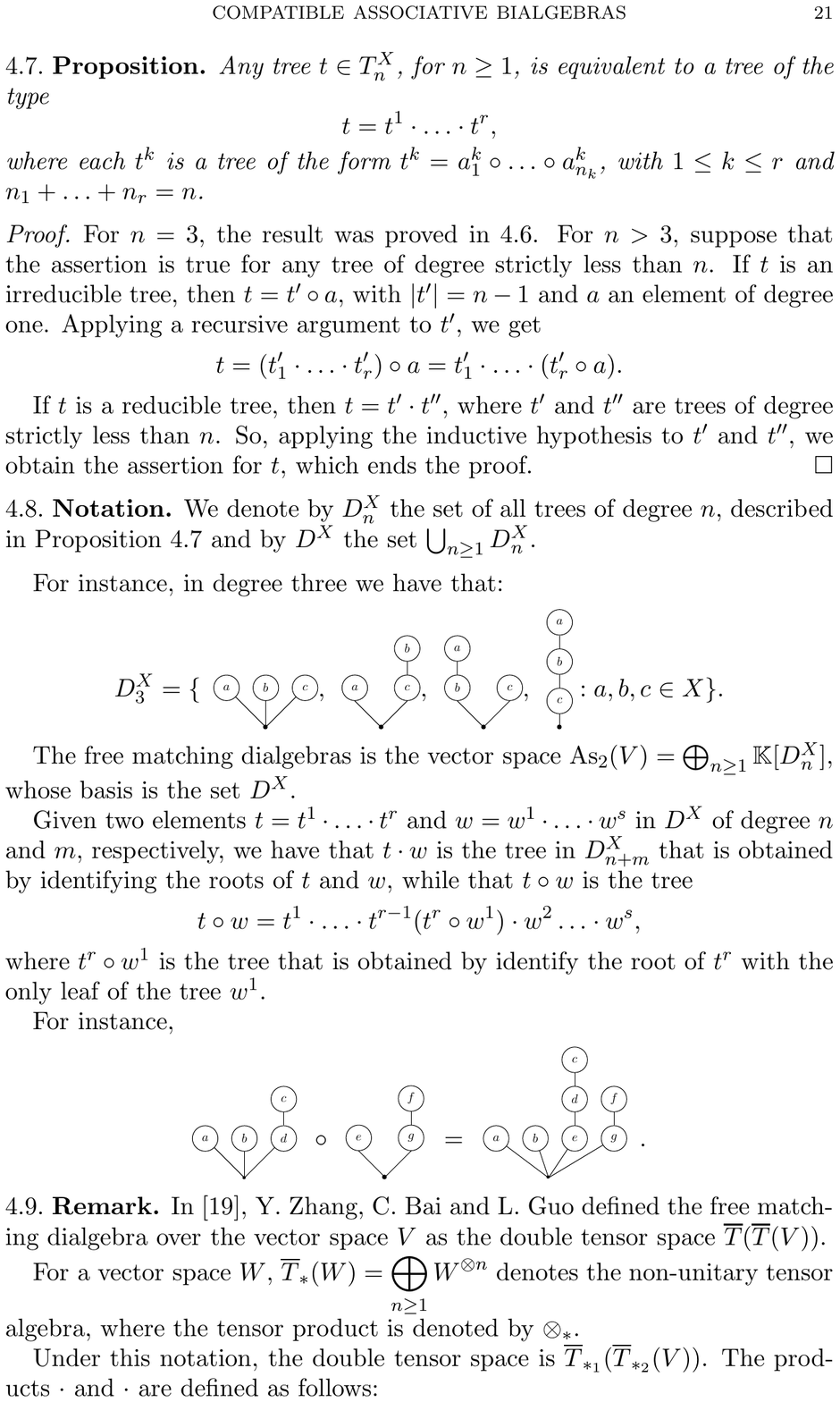}
\end{figure}\\

The free matching dialgebras is the vector space ${\rm{As}}_2(V)=\bigoplus_{n\geq 1}^{} \KK[D{}^{X}_{n}]$, whose basis is the set $D^X$.

Given two elements  $t=t^1 \cdot \ldots \cdot t^r$ and $w=w^1\cdot \ldots \cdot w^s$ in $D^X$ of degree $n$ and $m$, respectively, we have that $t \cdot w$ is the tree in $D_{n+m}^X$ that is obtained by identifying the  roots of $t$ and $w$, while that $t\circ w$ is the tree
$$t \circ w = t^1 \cdot \ldots \cdot t^{r-1}(t^r \circ w^1)\cdot w^2\ldots \cdot w^s,$$
where $t^r \circ w^1$ is the tree that is obtained by identify the root of $t^r$ with the only leaf of the tree $w^1$.
For instance,\begin{figure}[h!]
  \centering
  \includegraphics[width=8cm]{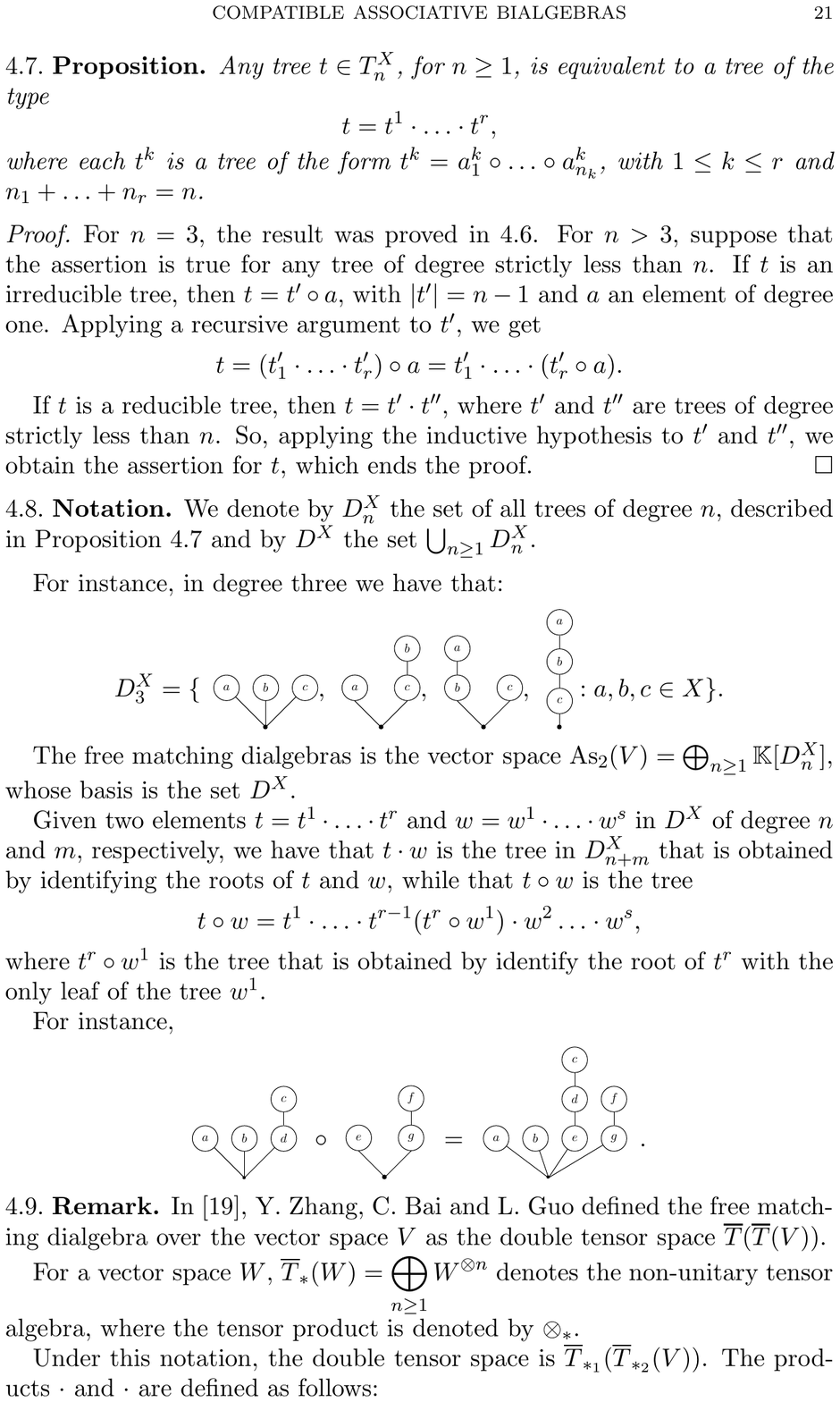}
\end{figure}

\begin{remark}
In \cite{Zhang-The category and operad of matching dialgebras}, Y. Zhang, C. Bai and  L. Guo defined the free matching dialgebra over the vector space $V$ as the double tensor space $\overline{T}(\overline{T}(V))$.

For a vector space $W$, $\overline{T}_{\ast}(W)=\displaystyle \bigoplus_{n\geq 1}W^{\otimes n}$ denotes the non-unitary tensor algebra, where the
tensor product is denoted by $\otimes_{\ast}$.

Under this notation, the double tensor space is $\overline{T}_{\ast_1}(\overline{T}_{\ast_2}(V))$. The products $\cdot$ and $\cdot$ are defined as follows:\\
For $u=u_1\otimes_{\ast_1}\cdots \otimes_{\ast_1}u_m$ and $v=v_1\otimes_{\ast_1}\cdots \otimes_{\ast_1}v_n$ in $\overline{T}_{\ast_1}(\overline{T}_{\ast_2}(V))$ with $u_i, v_j \in \overline{T}_{\ast_2}(V)$, for $1\leq i \leq m$, $1\leq j \leq n$, define:
\begin{enumerate}
\item $u \cdot v= u_1\otimes_{\ast_1}\cdots \otimes_{\ast_1}u_m \otimes_{\ast_1} v_1\otimes_{\ast_1}\cdots \otimes_{\ast_1}v_n$, the tensor product $\otimes_{\ast_1}$.
\item $u \circ v= u_1\otimes_{\ast_1}\cdots \otimes_{\ast_1}(u_m \otimes_{\ast_2} v_1)\otimes_{\ast_1}\cdots \otimes_{\ast_1}v_n.$
\end{enumerate}
In \cite{Zhang-The category and operad of matching dialgebras}, it is showed that $(\overline{T}_{\ast_1}(\overline{T}_{\ast_2}(V)), \cdot, \circ)$ is a matching dialgebra, which is free on the vector space $V$. The identification between both versions of the free matching dialgebras is clear. In our description, the tensors of first type are the trees of the type:
\begin{figure}[h!]
  \centering
  \includegraphics[width=3cm]{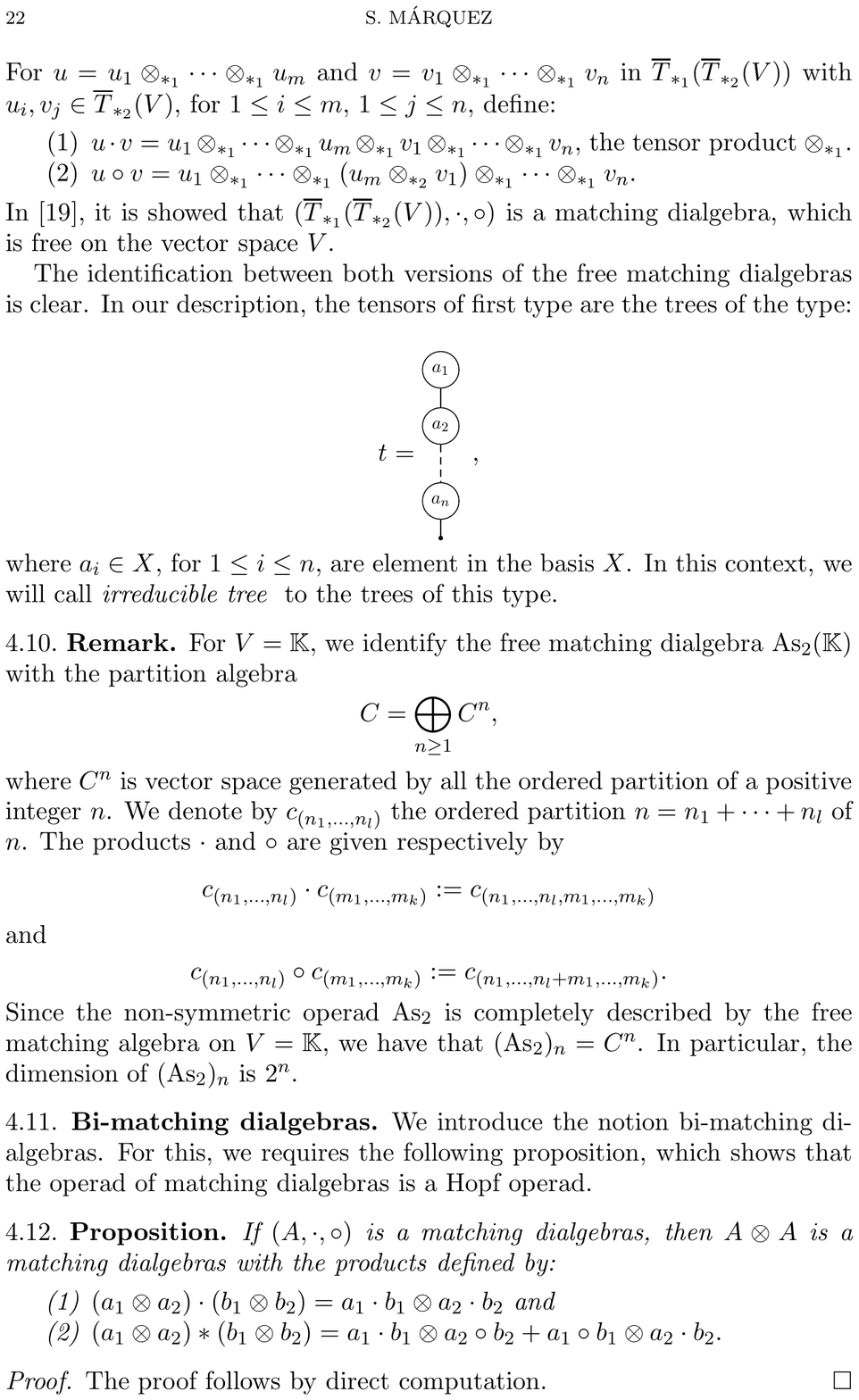}
\end{figure}\\
where $a_i \in X$, for $1 \leq i \leq n$, are element in the basis $X$. In this context, we will call {\it irreducible  tree } to the trees of this type.
\end{remark}

\newpage
\begin{remark}
For $V=\KK$, we identify the free matching dialgebra ${\rm As}_2(\KK)$ with the partition algebra $$C=\displaystyle\bigoplus_{n\geq 1}C^{n}, $$
where $C^n$ is vector space generated by all the ordered partition of a positive integer $n$. We denote by $c_{(n_1, \ldots,n_l)}$ the ordered partition $n=n_1+ \cdots + n_l$ of $n$. The products $\cdot$ and $\circ$ are given respectively by
$$c_{(n_1, \ldots,n_l)}\cdot c_{(m_1, \ldots,m_k)}:=c_{(n_1, \ldots,n_l,m_1, \ldots, m_k)}$$
and
$$c_{(n_1, \ldots,n_l)}\circ c_{(m_1, \ldots,m_k)}:=c_{(n_1, \ldots,n_l+m_1, \ldots, m_k)}.$$
Since the non-symmetric operad ${\rm As}_2$ is completely described by the free matching algebra on $V=\KK$, we have that $({\rm As}_2)_n=C^n$. In particular, the dimension of $({\rm As}_2)_n$ is $2^n$.
\end{remark}

\subsection{Bi-matching dialgebras}
We introduce the notion bi-matching dialgebras. For this, we requires the following proposition, which shows that the operad of matching dialgebras is a Hopf operad.
\begin{proposition}\label{morfismo de dialgebra}
If $(A, \cdot , \circ )$ is a matching dialgebras, then $A\otimes A$ is a matching dialgebras with the products defined by:
\begin{enumerate}
\item $(a_1\otimes a_2)\cdot (b_1 \otimes b_2)=a_1\cdot b_1 \otimes a_2\cdot b_2$ and
\item $(a_1\otimes a_2)\ast (b_1 \otimes  b_2)=a_1\cdot b_1 \otimes a_2\circ  b_2+a_1\circ b_1 \otimes a_2\cdot  b_2.$
\end{enumerate}
\end{proposition}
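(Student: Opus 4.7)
The plan is a direct term-by-term verification using the defining identities of a matching dialgebra in $A$. First I would note that the product $\cdot$ on $A \otimes A$ is the componentwise tensor product of the associative product $\cdot$ on $A$ with itself, so its associativity is immediate. The remaining conditions — associativity of $\ast$ and the two matching identities $(x\cdot y)\ast z = x\cdot(y\ast z)$ and $(x\ast y)\cdot z = x\ast(y\cdot z)$ — reduce, after expansion, to identities that hold in each tensor factor separately.

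For associativity of $\ast$, I would expand $((a_1 \otimes a_2) \ast (b_1 \otimes b_2)) \ast (c_1 \otimes c_2)$ and $(a_1 \otimes a_2) \ast ((b_1 \otimes b_2) \ast (c_1 \otimes c_2))$ into four simple tensors each, indexed by the choice of whether $\circ$ or $\cdot$ appears in each tensor factor at each associating step. The two \emph{pure} terms, of the shape $(a_1 \cdot b_1) \cdot c_1 \otimes (a_2 \circ b_2) \circ c_2$ and $(a_1 \circ b_1) \circ c_1 \otimes (a_2 \cdot b_2) \cdot c_2$, match their counterparts on the other side by ordinary associativity of $\cdot$ and of $\circ$ in $A$. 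The two \emph{mixed} terms swap into each other via the matching relations: applying $(x\cdot y)\circ z = x\cdot(y\circ z)$ in one factor and $(x\circ y)\cdot z = x\circ(y\cdot z)$ in the other converts $(a_1\cdot b_1)\circ c_1 \otimes (a_2\circ b_2)\cdot c_2$ into $a_1\cdot(b_1\circ c_1) \otimes a_2\circ(b_2\cdot c_2)$, exactly the matching mixed term on the right, and analogously for the other pair.

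For the two matching axioms on $A \otimes A$, the argument is even shorter. I would expand, for example, $((a_1\otimes a_2)\cdot(b_1\otimes b_2))\ast(c_1\otimes c_2)$ and $(a_1\otimes a_2)\cdot((b_1\otimes b_2)\ast(c_1\otimes c_2))$ as sums of two simple tensors; comparing them summand-by-summand, in each tensor factor one of the needed identities is plain associativity of $\cdot$ and the other is one of the two matching relations of $A$. The verification of $(x\ast y)\cdot z = x\ast(y\cdot z)$ is symmetric.

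There is no real obstacle: the matching dialgebra axioms of $A$ are precisely what is needed to move a $\circ$ past a $\cdot$, which is exactly the bookkeeping required to pair up the mixed tensors. The only thing to watch is to keep track of which tensor slot each identity is being applied to, since the roles of $\circ$ and $\cdot$ differ between the two slots in the definition of $\ast$.
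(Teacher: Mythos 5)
Your proposal is correct and is exactly the computation the paper has in mind: the paper's proof is just the sentence ``The proof follows by direct computation,'' and your expansion into pure and mixed tensors, with associativity handling the pure terms and the two matching identities pairing up the mixed ones, is that computation carried out in full.
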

\begin{proof}
The proof follows by direct computation.
\end{proof}
\medskip

\begin{remark}\label{observacionsobre morfismos de dialgebras}
Note that in Proposition \ref{morfismo de dialgebra}, the associativity of the product $\ast$ requires the compatibility condition between the products $\cdot $ and $\circ$.\\
\end{remark}

The following notion of bialgebra was originally introduced by A.B. Goncharov in \cite{Goncharov-Galois symmetries of fundamental groupoids and noncommutative geometry}.

\begin{definition}
A bi-matching dialgebra is a matching dialgebra $(H,\cdot,\circ)$ equipped with a coassociative coproduct $\Delta: H \rightarrow H\otimes H$ such that $\Delta$ is morphism of matching dialgebras with respect to the matching dialgebra structure of $H\otimes H$ defined in Proposition \ref{morfismo de dialgebra}.
\end{definition}

\begin{proposition}
Let $(H, \cdot, \Delta)$ be a bialgebra and let $R:H \rightarrow H$ be a right semi-homomorphism. If $R$ is a coderivation for the product $\cdot$, then
$$\Delta(x \circ y)=\Delta(x)\ast \Delta(y),$$
for any $x,y\in H$, where $x\circ y=x \cdot R(y)$ is the product defined in Remark \ref{observacionSemi-homo}.
\end{proposition}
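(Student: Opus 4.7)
The plan is a direct Sweedler-style calculation, using only that $\Delta$ is a morphism of associative algebras for the product $\cdot$, that $R$ is a coderivation, and that $x\circ y=x\cdot R(y)$.

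First I would expand the left-hand side by substituting the definition of $\circ$ and applying multiplicativity of $\Delta$ with respect to $\cdot$:
\[
\Delta(x\circ y)=\Delta(x\cdot R(y))=\Delta(x)\cdot \Delta(R(y))
=\sum x_{(1)}\cdot R(y)_{(1)}\otimes x_{(2)}\cdot R(y)_{(2)}.
\]
Next I would apply the coderivation hypothesis $\Delta(R(y))=\sum R(y_{(1)})\otimes y_{(2)}+\sum y_{(1)}\otimes R(y_{(2)})$ and distribute, obtaining two families of tensors:
\[
\sum x_{(1)}\cdot R(y_{(1)})\otimes x_{(2)}\cdot y_{(2)}\;+\;\sum x_{(1)}\cdot y_{(1)}\otimes x_{(2)}\cdot R(y_{(2)}).
\]
Finally I would reinterpret each $x_{(i)}\cdot R(y_{(j)})$ as $x_{(i)}\circ y_{(j)}$, so that the expression becomes
\[
\sum x_{(1)}\circ y_{(1)}\otimes x_{(2)}\cdot y_{(2)}\;+\;\sum x_{(1)}\cdot y_{(1)}\otimes x_{(2)}\circ y_{(2)},
\]
which is precisely $\Delta(x)\ast \Delta(y)$ by the formula of Proposition \ref{morfismo de dialgebra}.

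There is no real obstacle; the semi-homomorphism axiom $R(x\cdot y)=R(x)\cdot y$ is not needed here, only the coderivation property of $R$ and the fact that $(H,\cdot,\Delta)$ is a bialgebra. The only point worth checking carefully is the placement of $R$: in the first summand it naturally lands on the left tensor factor (via $R(y)_{(1)}=R(y_{(1)})$ from the coderivation formula), giving $x_{(1)}\circ y_{(1)}$ on the left and the classical product on the right, and symmetrically for the second summand. Matching this distribution with the definition of $\ast$ concludes the argument.
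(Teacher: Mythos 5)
Your proposal is correct and is essentially the same computation as the paper's proof: expand $\Delta(x\cdot R(y))$ by multiplicativity, substitute the coderivation formula for $\Delta(R(y))$, and reinterpret each $a\cdot R(b)$ as $a\circ b$ to recover $\Delta(x)\ast\Delta(y)$. Your remark that only the coderivation property (and not the semi-homomorphism axiom) enters this particular identity is accurate as well.
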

\begin{proof} By a straightforward calculation, we get:
\[\begin{array}{rll} \Delta(x\circ y)&=\Delta(x \cdot R(y))& \\
&=\Delta(x)\cdot \Delta(R(y))\\
&=x_{(1)}\otimes x_{(2)}\cdot (R(y_{(1)})\otimes y_{(2)}+ y_{(1)}\otimes R(y_{(2)}))\\
&=x_{(1)}\cdot R(y_{(1)}) \otimes x_{(2)}\cdot y_{(2)} + x_{(1)} \cdot y_{(1)}\otimes x_{(2)}\cdot  R(y_{(2)})\\
&=x_{(1)}\circ y_{(1)} \otimes x_{(2)}\cdot y_{(2)} + x_{(1)} \cdot y_{(1)}\otimes x_{(2)}\circ y_{(2)}\\
&=\Delta(x)\ast \Delta(y),
\end{array}\]
which proves the formula.
\end{proof}

\medskip

\begin{example}\label{SemiHomoEnG-L}
Consider the Grossman-Larson's Hopf algebra $H=\KK[\mathcal{T}]$ with basis the set of all non-planar rooted trees $\mathcal{T}$ described in \cite{Grossman-larson_Hopf-algebraic structure of families of trees}. Recall that the tree $e$ with one vertex is the unit for the product defined in $H$. Consider the linear map $R:H\rightarrow H$ such that, for any rooted tree $t$, $R(t)$ is the sum of trees  obtained  from $t$ by attaching one more outgoing edge and vertex to each vertex of $t$, which is originally defined on the Connes-Kreimer's Hopf algebra in \cite{Connes-Kreimer HopfAlgebras}.

In \cite{Panaite-Relating G-L y C-K}, Proposition 2.2, F. Panaite showed the linear map $R$ is a right semi-homomorphism for $H$. In fact, $R(x)=R(e)\cdot x$, for all $x \in H$. In his work  F. Panaite showed that $R$ is a coderivation for the coproduct $\Delta$ defined in $H$. Indeed, since $R(e)$ is a primitive element, we have that
\[\begin{array}{rll} \Delta(R(x))&=\Delta(R(e) \cdot x)& \\
&=\Delta(R(e))\cdot \Delta(x)\\
&=(R(e)\otimes e +e \otimes R(e))\cdot x_{(1)}\otimes x_{(2)}\\
&=R(e)\cdot x_{(1)}\otimes x_{(2)}+x_{(1)}\otimes R(e)\cdot x_{(2)}\\
&=R( x_{(1)})\otimes x_{(2)}+x_{(1)}\otimes R( x_{(2)}),\\
\end{array}\]
and $R$ is a coderivation. So, $(H, \cdot, \circ, \Delta)$ is a bialgebra, where $\circ$ is the associative product induced by $R$ and the compatibility condition between the products $\cdot$ and $\circ$ with the coproduct $\Delta$ is as in Remark \ref{observacionsobre morfismos de dialgebras}.
\end{example}

\begin{remark}\label{R(e)Primitivo}
The previous result obtained by F. Panaite may be generalized to any bialgebra $(H, \cdot, \Delta)$ with unit $e \in H$, that is, if $R:H\rightarrow H$ is a right semi-homomorphism and $R(e)$ is a primitive element of $H$, then $R$ is a coderivation. The proof is similar to that given in the Example \ref{SemiHomoEnG-L}.
\end{remark}

\begin{example}
Let $H=\KK[X]$ be the $\KK$-algebra of polynomial in one variable, with the usual product and the coproduct given by:
$$\Delta (X^n):=\sum _{i=0}^n\binom{n}{i}X^{n-i}\otimes X^{i},$$ with the homomorphism $R$ defined by  $R(X^n)=X^{n+1}$. As $R(1)=X$ is a primitive element, we get that $R$ is a  coderivation.
\end{example}

\subsection{The Goncharov's Hopf algebra} Let us describe the path algebra $P(S)$, introduced by A. B. Goncharov in  \cite{Goncharov-Galois symmetries of fundamental groupoids and noncommutative geometry}, which  motivates our notion of bialgebra, described in Remark \ref{observacionsobre morfismos de dialgebras}. \medskip

Let $S$ be a finite set. Denote by $P(S)$ the $\KK$-vector space with basis
$$p_{s_0, \ldots , s_n}, \quad {\rm for}\ n\geq 1,\ {\rm and}\  s_k \in S, \text{ for } k=0, \ldots ,n.$$

The associative product $\cdot: P(S) \otimes P(S) \rightarrow P(S)$ is defined as follows:
$$p_{a,X,b} \cdot p_{c, Y, d}=\begin{cases}
 p_{a,X,Y,d} & \text{, for}\ b=c,\\
  0 & \text{, for}\  b\neq c,\\
 \end{cases}$$
where the letters $a,b,c,d$ denote elements, and $X$ and $Y$ denote sequences, possibly empty, of elements of the set $S$. In particular, $p_{a,b}=p_{a,x}\cdot p_{x,b}$, for $x\in S$, and the unit for this product is  the element $e=\sum_{i \in S}p_{i,i}$.

The coproduct $\Delta: P(S) \rightarrow P(S) \otimes P(S)$ is given by:
$$\Delta(p_{a,x_1, \ldots ,x_n,b})=\sum_{k=0}^{n}\sum_{\sigma \in Sh(k,n-k)}p_{a,x_{\sigma(1)}, \ldots ,x_{\sigma (k)}, b}\otimes p_{a,x_{\sigma(k+1)}, \ldots ,x_{\sigma (n)}, b}.$$

For instance, $\Delta(p_{a,b})=p_{a,b}\otimes p_{a,b}$, for $a,b\in S$, and $$\Delta(e)=\sum_{i\in S}p_{i,i}\otimes p_{i,i}\neq e\otimes e.$$
The linear map $R:P(S)\rightarrow P(S)$, given by: $$R(e)=\sum_{i \in S}p_{i,i,i}$$
is a right semi-homomorphism.

With the definition above, we get that $R(p_{a,X,b})=p_{a,a,X,b}$, for any element $p_{a,X,b}$ of the basis. So, $R$ induces a new associative product $\circ :P(S)\otimes P(S) \rightarrow P(S)$ by setting $x\circ y=x\cdot R(y)$, that is:
$$p_{X,b} \circ p_{c,Y}=\begin{cases}
 p_{X,b,Y}, & {\rm for}\ b=c,\\
  0, & {\rm for}\ b\neq c,\\
 \end{cases}$$
 where $b, c \in S$, and $X$ and $Y$ are sequences of elements of $S$.
 \medskip
\begin{proposition}
The right semi-homomorphism $R:P(S) \rightarrow P(S)$ is a coderivation.
\end{proposition}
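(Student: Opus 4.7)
The plan is to verify the coderivation identity $\Delta \circ R = (R \otimes \mathrm{Id} + \mathrm{Id} \otimes R)\circ \Delta$ directly on the basis $\{p_{a,x_1,\ldots,x_n,b}\}$. Since $R(p_{a,x_1,\ldots,x_n,b})=p_{a,a,x_1,\ldots,x_n,b}$, I would reduce the claim to a comparison between $\Delta(p_{a,a,x_1,\ldots,x_n,b})$ and the image under $R\otimes\mathrm{Id}+\mathrm{Id}\otimes R$ of $\Delta(p_{a,x_1,\ldots,x_n,b})$. Note that the earlier Remark \ref{R(e)Primitivo} cannot be invoked as a shortcut here, because the unit $e=\sum_i p_{i,i}$ is not group-like in $P(S)$ and $R(e)=\sum_i p_{i,i,i}$ is not a primitive element in the standard sense.

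The key combinatorial step is the following. Label the $n+1$ interior letters of $p_{a,a,x_1,\ldots,x_n,b}$ as $y_1=a,\ y_2=x_1,\ \ldots,\ y_{n+1}=x_n$, and apply the shuffle coproduct formula. Each term of $\Delta(p_{a,a,x_1,\ldots,x_n,b})$ comes from a pair $(k,\sigma)$ with $\sigma\in\mathrm{Sh}(k,n+1-k)$, and the key observation is to partition these pairs according to whether the distinguished index $1$ (the extra copy of $a$ produced by $R$) lands in the left block of length $k$ or in the right block of length $n+1-k$.

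Now I would identify each piece. In the first family, $y_1=a$ appears at the front of the left tensor factor, so that factor becomes $p_{a,a,x_{\tau(1)},\ldots,x_{\tau(k-1)},b}=R(p_{a,x_{\tau(1)},\ldots,x_{\tau(k-1)},b})$ with $\tau$ a shuffle of the $x_i$'s in $\mathrm{Sh}(k-1,n+1-k)$; setting $j=k-1$ and summing over $j$ and $\tau$, this sum is exactly $(R\otimes\mathrm{Id})\Delta(p_{a,x_1,\ldots,x_n,b})$. The symmetric analysis of the second family (where $y_1$ goes to the right factor) recovers $(\mathrm{Id}\otimes R)\Delta(p_{a,x_1,\ldots,x_n,b})$. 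Adding the two sums yields the desired identity.

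The only point requiring care, rather than a genuine obstacle, is the book-keeping of the shuffle indexing: one must check that deleting the distinguished entry $y_1$ from a shuffle of $(a,x_1,\ldots,x_n)$ produces a bijection between the set $\mathrm{Sh}(k,n+1-k)$ (restricted to shuffles with $y_1$ on a prescribed side) and $\mathrm{Sh}(k-1,n+1-k)$ or $\mathrm{Sh}(k,n-k)$ respectively, so that no term is double-counted or missed. Once this bijection is in place, the verification is a one-line matching of summands.
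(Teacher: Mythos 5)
Your proof is correct, but it takes a genuinely different route from the paper's. The paper exploits the identity $R(x)=R(e)\cdot x$ together with the fact that $\Delta$ is an algebra morphism for the product $\cdot$: it first computes $\Delta(R(e))=\sum_{i\in S}\bigl(p_{i,i,i}\otimes p_{i,i}+p_{i,i}\otimes p_{i,i,i}\bigr)$ and then multiplies this against $\Delta(x)=p_{a,X_{(1)},b}\otimes p_{a,X_{(2)},b}$, using that $p_{i,i,i}\cdot p_{a,Y,b}$ and $p_{i,i}\cdot p_{a,Y,b}$ vanish unless $i=a$, so that only $R(x_{(1)})\otimes x_{(2)}+x_{(1)}\otimes R(x_{(2)})$ survives. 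This is precisely the mechanism of Remark \ref{R(e)Primitivo} adapted to the case where $e$ is a sum of orthogonal local idempotents rather than a group-like element; so your observation that the remark cannot be invoked verbatim is accurate, yet the paper salvages its argument anyway. Your proof instead works entirely at the level of the shuffle coproduct: you expand $\Delta(p_{a,a,x_1,\ldots,x_n,b})$ and split the $(k,n+1-k)$-shuffles according to which tensor factor receives the distinguished first letter, matching the two families with the terms of $(R\otimes\mathrm{Id})\Delta$ and $(\mathrm{Id}\otimes R)\Delta$; since shuffles are indexed by positions rather than letter values, no collision occurs even if $a$ coincides with some $x_i$, and the deletion map you describe is indeed a bijection onto the smaller shuffle sets. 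The trade-off is that your argument is self-contained and does not presuppose the multiplicativity of $\Delta$ with respect to $\cdot$, at the cost of heavier index bookkeeping, whereas the paper's proof is shorter but rests on the Hopf-algebra compatibility of $(P(S),\cdot,\Delta)$.
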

\begin{proof}
Note that for any element $i \in S$, we have that $\Delta(p_{i,i,i})=p_{i,i,i}\otimes p_{i,i} + p_{i,i}\otimes p_{i,i,i},$ therefore:
 $$\Delta(R(e))=\sum_{i\in S}p_{i,i,i}\otimes p_{i,i} + p_{i,i}\otimes p_{i,i,i}.$$

Let $x=p_{a,X,b}$  be an element of the basis of $P(S)$. By definition of the coproduct $\Delta$, we have that the element $\Delta(x)$ is a sum of tensors of type $$p_{a,X',b}\otimes p_{a,X'',b},$$ where $X'$ and $X''$ are (possibly empty) ordered subsequences of $X$.

Using the Sweedler' notation, we write
$$\Delta(x)=x_{(1)}\otimes x_{(2)}= p_{a,X_{(1)},b}\otimes p_{a,X_{(2)},b}.$$

Computing $\Delta(R(x))$, we obtain that:
\[\begin{array}{rll} \Delta(R(x))&=\Delta(R(e) \cdot x)& \\
&=(\sum_{i\in S}p_{i,i,i}\otimes p_{i,i} + p_{i,i}\otimes p_{i,i,i})\cdot x_{(1)}\otimes x_{(2)}\\
&=(\sum_{i\in S}p_{i,i,i}\otimes p_{i,i} + p_{i,i}\otimes p_{i,i,i})\cdot p_{a,X_{(1)},b}\otimes p_{a,X_{(2)},b}\\
&=p_{a,a,a}\cdot p_{a,X_{(1)},b}\otimes p_{a,a}\cdot p_{a,X_{(2)},b} + p_{a,a}\cdot p_{a,X_{(1)},b}\otimes p_{a,a,a}\cdot p_{a,X_{(2)},b} \\
&= p_{a,a,X_{(1)},b}\otimes  p_{a,X_{(2)},b} +  p_{a,X_{(1)},b}\otimes p_{a,a,X_{(2)},b} \\
&=R(x_{(1)})\otimes x_{(2)} + x_{(1)}\otimes R(x_{(2)}), \\
\end{array}\]
which ends the proof.
\end{proof}

\medskip

\subsection{Notion of compatible infinitesimal bialgebra in matching dialgebras}

We consider the notion of compatible infinitesimal bialgebra in matching dialgebras. A direct compute shows that this notion of bialgebra is well-defined in a matching dialgebra.\medskip

Let $(A, \circ, \Delta)$ be an infinitesimal bialgebra. The product $\circ$ and the coproduct $\Delta$ may be extended to $\overline{T}(A)=\bigoplus _{n\geq 1}A^{\otimes n}$ as follows:
\begin{enumerate}
\item $(a_1\ldots a_n) \circ (b_1\ldots b_m) = a_1\ldots a_{n-1}(a_n \circ b_1)b_2\ldots b_m$ and
\item $\Delta(a_1\ldots a_n)=\displaystyle \sum_{i=1}^{n-1}a_1\ldots a_{i-1}\Delta(a_i)a_{i+1}\ldots a_n+ \displaystyle\sum_{i=1}^{n-1}a_1\ldots a_{i}\otimes a_{i+1}\ldots a_n$
\end{enumerate}
If we denoted by $\cdot$ the concatenation product in $\overline{T}(A)$, then $(\overline{T}(A), \cdot, \circ)$ is a matching dialgebra, and $\Delta$ is infinitesimal for both products.

In particular, consider the free matching dialgebra ${\rm As}_2(V)=\overline{T}(\overline{T}(V))$. In this case, $A=\overline{T}(V)$ is an infinitesimal bialgebra with the concatenation product and the deconcatenation coproduct. Identifying the tree $a_1\circ \ldots \circ a_n$  with a tensor in $\overline{T}(V)$ and the product $\circ$ with the concatenation product, we get:
$$\Delta (a_1\circ \ldots \circ a_n):=\sum _{i=1}^{n-1}(a_1\circ \ldots \circ a_i)\otimes (a_{i+1}\circ \ldots \circ a_n).$$

Thus, extending $\Delta$ to $\overline{T}(\overline{T}(V))$, we have that ${\rm As}_2(V)$ is a compatible infinitesimal bialgebra.\medskip

The explicit formula for the coproduct $\Delta$ is given by:
$$\Delta(t)=\sum_{i=1}^{n-1}t_{\{a_1,\ldots,a_i\}}\otimes t_{\{a_{i+1},\ldots,a_{n}\}}, $$
described in Proposition \ref{FormulaCoproducto}, which extends the deconcatenation coproduct of $\overline{T}(V)$.
\newpage
\begin{example}
When $t$ is the tree
\begin{figure}[h!]
  \centering
  \includegraphics[width=5.5cm]{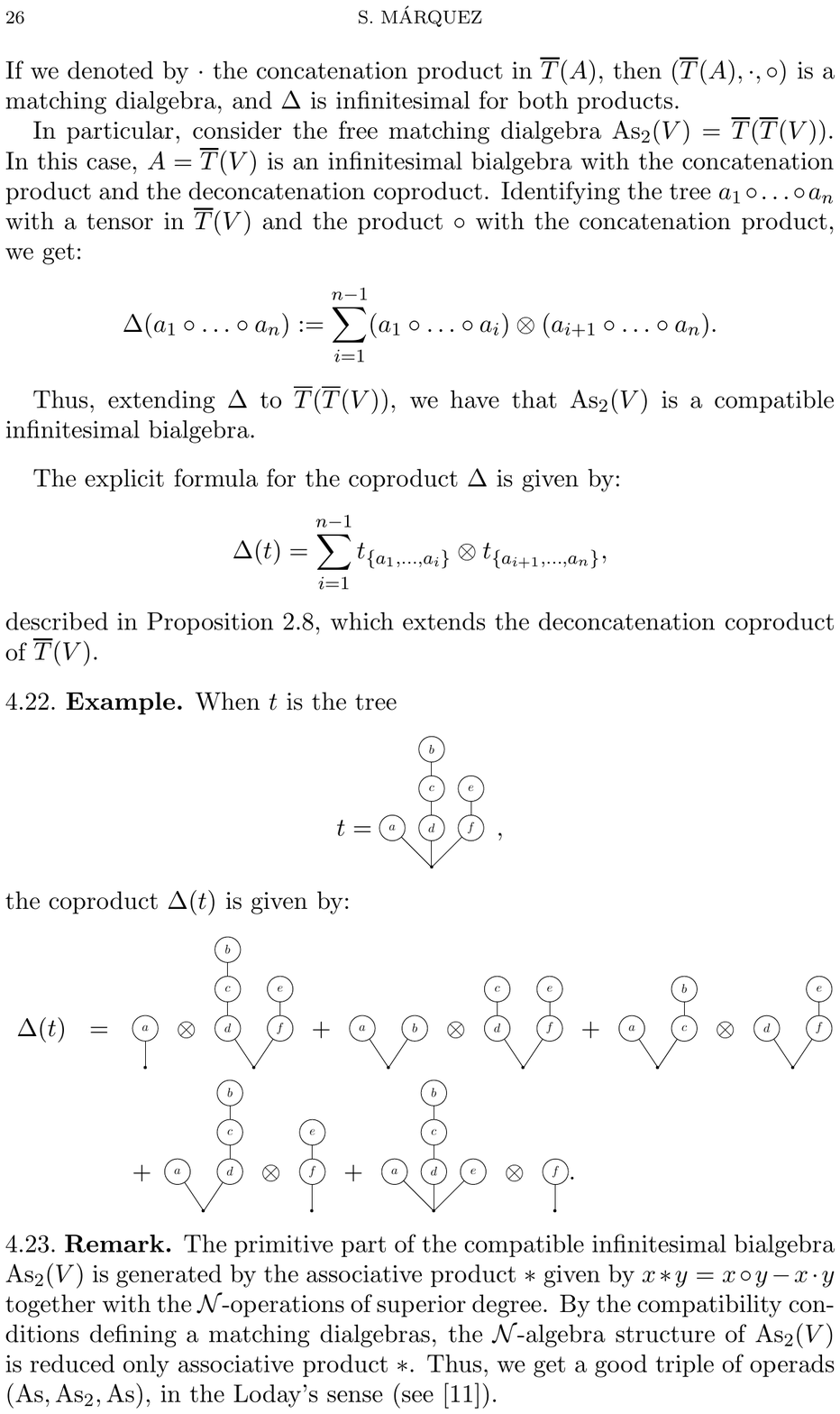}
\end{figure}\\
the coproduct $\Delta(t)$ is given by:
\begin{figure}[h!]
  \centering
  \includegraphics[width=12cm]{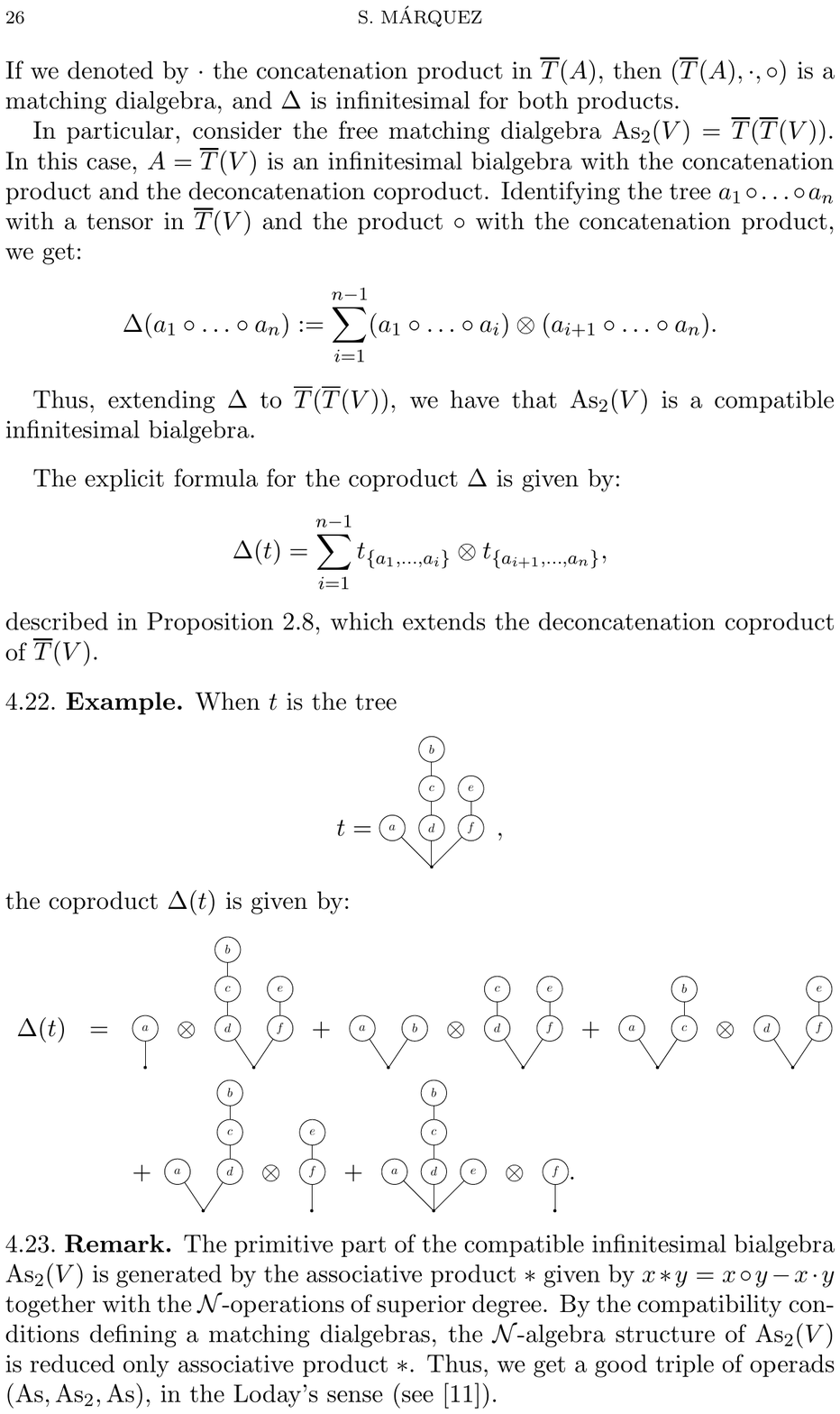}
\end{figure}

\end{example}
\begin{remark}
The primitive part of the compatible infinitesimal bialgebra ${\rm As}_2(V)$ is generated by the associative product $\ast$ given by $x \ast y=x\circ y - x\cdot y$ together with the $\mathcal{N}$-operations of superior degree. By the compatibility conditions defining a matching dialgebras, the $\mathcal{N}$-algebra structure of ${\
rm As}_2(V)$ is reduced only associative product $\ast$. Thus, we get a good triple of operads $( {\rm As}, {\rm As}_2, {\rm As})$, in the Loday's sense (see \cite{Loday-GeneralizedBialgebras}).
\end{remark}

\end{document}